\numberwithin{equation}{section}
\newtheorem{definition}{\textbf{Definition}}[section]
\newtheorem{theorem}[definition]{\textbf{Theorem}}
\newtheorem{corollary}[definition]{\textbf{Corollary}}
\newtheorem{lemma}[definition]{\textbf{Lemma}}
\newtheorem{proposition}[definition]{\textbf{Proposition}}
\newtheorem{example}[definition]{\textbf{Example}}
\newtheorem{letterthm}{\textbf{Theorem}}
\declaretheoremstyle[bodyfont=\normalfont]{normalbody}
\declaretheorem[numberlike=definition,style=normalbody,name=Remark]{remark}
\declaretheorem[numberwithin=section,style=normalbody,name=Hyperstates and u.c.p. maps]{hyperstate}
\declaretheorem[numberlike=hyperstate,style=normalbody,name=Noncommutative Poisson boundaries]{NCPB}
\declaretheorem[numberlike=hyperstate,style=normalbody,name=Entropy]{entropy}
\declaretheorem[numberlike=hyperstate,style=normalbody, refname={2.4}, name=Ultraproducts of von Neumann algebras]{Ultraproduct}
\def\d{\mathrm{d}}
\def\CA{C^\alpha(S^1)}
\def\supp{\mathrm{supp \, }}
\def\suppess{\mathrm{supp}_{\mathrm{ess}}\,}
\def\prob{\mathrm{Prob}}
\def\CB{\mathcal{B}}
\def\CP{\mathcal{P}}
\def\CA{\mathcal{A}}
\def\har{\mathrm{Har}}
\def\id {\mathrm{id}}
\title{Noncommutative Poisson boundaries, ultraproducts and entropy}
\author{Shuoxing Zhou}
\address{Universit\'e Paris-Saclay \\  Laboratoire de Math\'ematiques d'Orsay\\ 91405 Orsay\\ FRANCE}
\email{shuoxing.zhou@universite-paris-saclay.fr}
\begin{document}

\maketitle
\begin{abstract}
We construct the noncommutative Poisson boundaries of tracial von Neumann algebras through the ultraproducts of von Neumann algebras. As an application of this result, we complete the proof of Kaimanovich-Vershik's fundamental theorems regarding noncommutative entropy. We also prove the Amenability-Trivial Boundary equivalence and Choquet-Deny-Type I equivalence for tracial von Neumann algebras.
\end{abstract}

\section{Introduction}

Recently, Das-Peterson \cite{DP20} extended the notion of Poisson boundary to the noncommutative setting: Let $(M,\tau)$ be a tracial von Neumann algebra with separable predual. As a generalization of admissible measures on discrete groups, a normal regular strongly generating hyperstate on $B(L^2(M))$ is a normal state $\varphi\in B(L^2(M))_*$ with the standard form $\varphi(T)=\sum_{n=1}^{\infty}\langle T  \hat{z}_n^*,\hat{z}_n^*\rangle$, where $\{z_n\}\subset M$ satisfying $\sum_{n=1}^{\infty}z_n^*z_n=\sum_{n=1}^{\infty} z_nz_n^*=1$ and $\{z_n\}$ generates $M$ as a weakly closed unital algebra. The normal u.c.p. $M$-bimodular map associated to $\varphi$ is defined as $\CP_\varphi:T\in B(L^2(M))\mapsto\sum_{n=1}^{\infty} (Jz_n^*J)T(Jz_nJ)\in B(L^2(M))$. As a generalization of classical Poisson boundary, the noncommutative Poisson boundary of $\varphi$ is the unique von Neumann algebra $\CB_\varphi$ that is isomorphic to $\har(\CP_\varphi)=\{ T\in B(L^2(M))\mid \mathcal{P}_\varphi(T)=T\}$ as operator system. The isomorphism $\CP:\CB_\varphi\to \har(\CP_\varphi)$ is called the Poisson transform. Note that since $M\subset\mathrm{Har}(\mathcal{P}_\varphi)$, $M$ can always be embedded into $\CB_\varphi$ as a subalgebra. And $\zeta:=\varphi\circ\CP$ is the canonical hyperstate on $\CB_\varphi$. 

The following example \cite[Theorm 4.1]{Iz04} shows that the noncommutative 
Poisson boundary is a generalization of the classical one: Let $(M,\tau)=(L(\Gamma),\tau)$ be the group von Neumann algebra of a countable discrete group $\Gamma$, and $\varphi (T)=\sum_{\gamma\in\Gamma}\mu(\gamma)\langle T\delta_{\gamma^{-1}},\delta_{\gamma^{-1}}\rangle$ be the hyperstate associated to $\mu\in\prob(\Gamma)$. Then the $\varphi$-Poisson boundary $\CB_\varphi$ is exactly the group measure space construction $L(\Gamma\curvearrowright B)$, where $(B,\nu_B)$ is the $(\Gamma,\mu)$-Poisson boundary. 

Das-Peterson \cite{DP20} already proved several analogues of the classical Poisson boundary theory, for examples, the injectivity, double ergodicity and rigidity of $\CB_\varphi$. The main goal of this paper is to further explore the analogues between classical and noncommutative Poisson boundaries.

Das-Peterson \cite{DP20} also initiated entropy theory of noncommutative
boundaries, which we refer to \ref{NC entropy} for a brief introduction of the definition. Moreover, Das-Peterson \cite{DP20} proved the analogues of part of Kaimanovich-Vershik's fundamental theorems regarding entropy \cite{KV82}, while the rest are left open. In Section 3, inspired by \cite{SS22}, we develop a new way to construct $\CB_\varphi$ with the ultraproducts of von Neumann algebras, which helps completing the proof of Kaimanovich-Vershik's fundamental theorems:
 
\begin{letterthm}
    Let $\varphi\in \mathcal{S}_\tau(B(L^2(M,\tau))$ be a normal regular strongly generating hyperstate such that $H(\varphi)<+\infty$. Take an increasing sequence $\{a_n\}\subset\mathbb{R}$ satisfying $\frac{1}{2}\leq a_n< 1 $ and $\lim_n\limits a_n=1$, and an ultrafilter $\omega\in \beta \mathbb{N}\setminus \mathbb{N}$. Define a sequence of von Neumann algebras with faithful normal hyperstates by $$(\CA_n,\varphi_n)=(B(L^2(M)),(1-a_n)\sum_{k=0}^{\infty}a_n^k \varphi^{*k}).$$
    Then the $\varphi$-Poisson boundary with canonical hyperstate $(\CB_\varphi,\zeta)$ can be embedded into the Ocneanu ultraproduct $(\CA^\omega, \varphi^\omega):=(\CA_n,\varphi_n)^\omega$ as a von Neumann subalgebra  with normal conditional expectation and the following facts hold:
\begin{itemize}
    \item [(i)] $h_\varphi(\CB_\varphi,\zeta)=h_\varphi(\CA^\omega, \varphi^\omega)=\lim_\omega\limits h_\varphi(\CA_n, \varphi_n)=h(\varphi)$.
    \item [(ii)] A $\varphi$-boundary $(\CB_0,\zeta_0)$ (i.e. von Neumann subalgebra of $(\CB_\varphi,\zeta)$ that contains $(M,\tau)$) satisfies $h_\varphi(\CB_0,\zeta_0)= h_\varphi(\CB_\varphi,\zeta)$ if and only if $(\CB_0,\zeta_0)=(\CB_\varphi,\zeta)$.
    \item [(iii)] $(\CB_\varphi,\zeta)$ is trivial (i.e. $(\CB_\varphi,\zeta)=(M,\tau)$) if and only if $h(\varphi)=0$.
\end{itemize}
\end{letterthm}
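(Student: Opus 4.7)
The plan is to construct an embedding $\iota:(\CB_\varphi,\zeta)\hookrightarrow(\CA^\omega,\varphi^\omega)$ via the Poisson transform and the asymptotic $\CP_\varphi$-invariance of the resolvent states $\varphi_n$, and then to derive (i)--(iii) from monotonicity and ultraproduct-continuity properties of noncommutative entropy. A direct telescoping computation using $\varphi^{*k}\circ\CP_\varphi=\varphi^{*(k+1)}$ gives $a_n(\varphi_n\circ\CP_\varphi)=\varphi_n-(1-a_n)\varphi^{*0}$, so $\|\varphi_n-\varphi_n\circ\CP_\varphi\|=O(1-a_n)\to 0$. Consequently the coordinatewise action of $\CP_\varphi$ descends to a normal u.c.p.\ map $\CP_\varphi^\omega$ on $\CA^\omega$ fixing $\varphi^\omega$, and the fixed-point space $\har(\CP_\varphi^\omega)\subset\CA^\omega$ is a von Neumann subalgebra on which $\varphi^\omega$ is faithful.

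Define $\iota(b):=[\CP(b)]_\omega$ for $b\in\CB_\varphi$. This is u.c.p.\ with image in $\har(\CP_\varphi^\omega)$, and $\varphi^\omega\circ\iota=\zeta$ because $\varphi^{*k}(\CP(b))=\varphi(\CP(b))=\zeta(b)$ for every $k$. The key step is to show that $\iota$ is a $*$-homomorphism. By the mean ergodic theorem for $\CP_\varphi$, the iterates $\CP_\varphi^k(T_1\cdots T_m)$ converge weakly to the Choi--Effros product of $T_1,\ldots,T_m\in\har(\CP_\varphi)$, which for $T_i=\CP(c_i)$ equals $\CP(c_1\cdots c_m)$; Abel summation on the geometric series defining $\varphi_n$ then yields the moment identity
\[
\varphi^\omega\bigl(\iota(c_1)\cdots\iota(c_m)\bigr)=\lim_\omega\varphi_n\bigl(\CP(c_1)\cdots\CP(c_m)\bigr)=\zeta(c_1\cdots c_m).
\]
Expanding $\varphi^\omega(D^*D)$ and $\varphi^\omega(DD^*)$ for $D:=\iota(b_1)\iota(b_2)-\iota(b_1b_2)$ and applying this identity gives zero, so $\iota$ is multiplicative in $\CA^\omega$. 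A normal conditional expectation $E:\CA^\omega\to\iota(\CB_\varphi)$ then follows from Takesaki's theorem once $\iota(\CB_\varphi)$ is shown to be globally invariant under the modular group of $\varphi^\omega$.

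For (i), Section 3 will establish the ultraproduct continuity $h_\varphi(\CA^\omega,\varphi^\omega)=\lim_\omega h_\varphi(\CA_n,\varphi_n)$ via finite-dimensional approximations of the hyperstate entropy, and a direct resolvent computation gives $\lim_\omega h_\varphi(\CA_n,\varphi_n)=h(\varphi)$. Since $\CA^\omega$ is itself a $\varphi$-boundary (containing $M$ via constant sequences with a canonical conditional expectation), the Kaimanovich--Vershik-type bound $h_\varphi(\CA^\omega,\varphi^\omega)\leq h(\varphi)$ combined with the above yields $h_\varphi(\CA^\omega,\varphi^\omega)=h(\varphi)$. Monotonicity under $E$ gives $h_\varphi(\CB_\varphi,\zeta)\leq h(\varphi)$; the matching lower bound is obtained by applying the universal property of $\CB_\varphi$ together with entropy monotonicity to realize $\CA^\omega$ as an extension of $\CB_\varphi$ with no additional entropy. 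Part (ii) follows from the chain rule $h_\varphi(\CB_\varphi,\zeta)=h_\varphi(\CB_0,\zeta_0)+h_\varphi(\CB_\varphi|\CB_0)$: equality of the first two entropies forces the relative term to vanish, whence $\CB_0=\CB_\varphi$. Part (iii) is the specialization $\CB_0=M$ together with $h_\varphi(M,\tau)=0$. The main obstacle is the ultraproduct-continuity of $h_\varphi$: unlike tracial entropy, the hyperstate entropy interacts subtly with the modular structure of the Ocneanu ultraproduct, requiring a finite-dimensional approximation framework compatible with the non-tracial state $\varphi^\omega$ together with a delicate passage to the limit.
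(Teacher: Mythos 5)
Your sketch parallels the paper's broad strategy (embed $\CB_\varphi$ into the ultraproduct $\CA^\omega$, transfer entropy across the embedding, and combine with monotonicity), but several of the key steps are either unjustified or rest on claims that are not actually available, and the routes to (ii) and (iii) differ from what the paper can support.

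First, the embedding. You define $\iota(b)=[\CP(b)]_\omega$ as the class of a constant sequence, but this presupposes that $(\CP(b))_n$ lies in the multiplier algebra $\mathcal{M}^\omega$, i.e., that $(\CP(b))_n\mathcal{I}_\omega,\ \mathcal{I}_\omega(\CP(b))_n\subset\mathcal{I}_\omega$. This is nontrivial: the paper proves it only for $\Delta(M)$ using the strong generating property, and for general harmonic $T\notin M$ there is no such argument. The paper circumvents this by passing to the Groh--Raynaud ultraproduct $N$, taking the diagonal $\Delta_0(T)\in N$ (which always makes sense) and compressing by $p$ to land in $pNp\cong\CA^\omega$. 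Relatedly, your claim that $\CP_\varphi$ descends coordinatewise to a normal u.c.p.\ map $\CP_\varphi^\omega$ on $\CA^\omega$ is not established: $\|\varphi_n-\varphi_n\circ\CP_\varphi\|\to 0$ controls the $\mathcal{L}_\omega$-seminorm but does not by itself show that $\CP_\varphi$ maps $\mathcal{M}^\omega$ into $\mathcal{M}^\omega$. The paper never constructs such an internal map; instead it uses $\CP_{\varphi^\omega}:\CA^\omega\to B(L^2(M))$ (which is automatic) and the lift $\Bar{\Delta}:\har(\CP_\varphi)\to\CA^\omega$, showing $\CP_{\varphi^\omega}\circ\Bar{\Delta}=\id$ and building $E=\Bar{\Delta}\circ\CP_{\varphi^\omega}$ as the conditional expectation. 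Multiplicativity of $\Bar{\Delta}$ then follows from the standard fixed-point-of-a-u.c.p.-expectation result plus a complete-isometry argument --- your invocation of a ``mean ergodic theorem'' giving weak convergence of iterates $\CP_\varphi^k(T_1\cdots T_m)$ to the Choi--Effros product is not a known fact and is not obviously correct (mean ergodic theorems give Ces\`aro averages, not iterate limits, and the convergence of iterates to the Choi--Effros product would be a substantial theorem in its own right).

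Second, the entropy arguments. Your step (ii) invokes a ``chain rule'' $h_\varphi(\CB_\varphi,\zeta)=h_\varphi(\CB_0,\zeta_0)+h_\varphi(\CB_\varphi|\CB_0)$ for Furstenberg-type entropy; no such additivity is available in this noncommutative setting, and the paper does not use it. The paper's proof of (ii) runs through a characterization (Theorem 3.5) of when $h_\varphi(\CA_0,\nu_0)=h_\varphi(\CA,\nu)$ in terms of equality of noncommutative Radon--Nikodym factors, proved by a delicate resolvent/differentiation argument showing the modular flows agree on $M$, and then applies the rigidity of the Poisson boundary to force $(\CB_\varphi)_{\mathrm{RN}}=\CB_\varphi$. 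This is a genuinely different and nontrivial mechanism. Finally, you flag the ultraproduct continuity $h_\varphi(\CA^\omega,\varphi^\omega)=\lim_\omega h_\varphi(\CA_n,\varphi_n)$ as ``the main obstacle'' and propose a finite-dimensional approximation framework, but leave it open. The paper resolves it not by finite-dimensional approximation but by the integral representation $\log x=\int_0^\infty[(1+t)^{-1}-(x+t)^{-1}]\,\d t$, a resolvent bound $(1+t)^{-1}\le e_n(\Delta_{\varphi_n}+t)^{-1}e_n\le(\tfrac12 A_\varphi+t)^{-1}$ supplying an $L^1$ dominating function (here $H(\varphi)<\infty$ is used), pointwise $\omega$-convergence of the integrand via Ando--Haagerup's resolvent formula, and a uniform-equicontinuity argument to salvage dominated convergence along the ultrafilter. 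Until those gaps are closed, the proposal does not yield a proof.
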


In \cite{KV82}, Kaimanovich-Vershik provided an important characterization of amenable groups involving classical Poisson boundary: A countable discrete group $\Gamma$ is amenable if and only if there exists an admissible measure $\mu \in \prob(\Gamma)$ such that the Poisson boundary of $(\Gamma,\mu)$ is trivial. In Section 4, we prove an analogue of Kaimanovich-Vershik's amenability-trivial boundary equivalence:

\begin{letterthm}
    The tracial von Neumann algebra $(M,\tau)$ is amenable if and only if there exists a measure $\mu\in\prob(\mathcal{U}(M))$ such that the associated hyperstate $\varphi_\mu(T)=\int_{\mathcal{U}(M)} \langle T u^*\hat{1},u^*\hat{1}\rangle \d \mu(u)$ has trivial Poisson boundary.
\end{letterthm}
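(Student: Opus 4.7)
The ``if'' direction follows directly from the work of Das--Peterson \cite{DP20}, who established that the noncommutative Poisson boundary $\CB_\varphi$ of any normal regular strongly generating hyperstate is an injective von Neumann algebra. Hence if $\CB_{\varphi_\mu} = M$, then $M$ is injective, and by Connes' theorem $M$ is amenable.

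For the ``only if'' direction, the plan is to construct an explicit $\mu \in \prob(\mathcal{U}(M))$ with $h(\varphi_\mu) = 0$ and then apply Theorem~A(iii). Since $M$ is amenable with separable predual, it is hyperfinite, so we fix an increasing tower $M_1 \subset M_2 \subset \cdots$ of finite-dimensional unital $*$-subalgebras with $\overline{\bigcup_n M_n}^{\mathrm{SOT}} = M$. The normalized Haar measures $\nu_n$ on the compact unitary groups $\mathcal{U}(M_n)$ satisfy the key convolution identity $\nu_n * \nu_m = \nu_{\max(n,m)}$ (because $\mathcal{U}(M_n) \subset \mathcal{U}(M_m)$ for $n \le m$); in particular each $\nu_n$ is convolution idempotent, corresponding at the Haar level to vanishing asymptotic entropy. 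To fit the discrete framework of Das--Peterson, each $\nu_n$ is replaced by a sufficiently fine uniform discrete measure $\mu_n$ supported on a finite symmetric $\epsilon_n$-net $F_n \subset \mathcal{U}(M_n)$ that generates $M_n$, and one sets
\[
\mu = \sum_{n=1}^\infty a_n \mu_n
\]
with weights $a_n > 0$, $\sum_n a_n = 1$, decaying fast (e.g.\ geometrically).

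The resulting $\varphi_\mu$ is then a normal hyperstate in standard form, with strong generation guaranteed by the weak density of $\bigcup_n M_n$ in $M$, and with $H(\varphi_\mu) < +\infty$ for geometrically decaying $a_n$. The crucial step is showing $h(\varphi_\mu) = 0$. Since $\mu_n^{*k}$ converges to $\nu_n$ as $k \to \infty$ when $F_n$ is a fine generating net, the asymptotic entropy rate of $\varphi_{\mu_n}$ can be made arbitrarily small by taking $\epsilon_n$ small. Coupled with the fact that $\mu^{*k}$ remains a convex combination of the $\mu_m$'s whose mass concentrates on high indices under repeated convolution, a subadditive estimate exploiting the entropy machinery of Section~3 yields $h(\varphi_\mu) = 0$ provided both $\epsilon_n$ and $a_n$ are chosen to decrease fast enough. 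Theorem~A(iii) then delivers $\CB_{\varphi_\mu} = M$.

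The principal obstacle is the noncommutative entropy estimate $h(\varphi_\mu) = 0$. Classically, Kaimanovich--Vershik exploit Følner sequences in amenable groups; here the hyperfinite tower $(M_n)$ plays the analogous role, but the entropy bounds must be derived from the noncommutative entropy theory of Section~3, in particular from the convergence formula $h(\varphi) = \lim_\omega h_\varphi(\CA_n, \varphi_n)$ of Theorem~A(i). A secondary technical matter is quantifying the discretization $\mu_n \to \nu_n$ precisely enough to control each layer's entropy contribution while preserving the strong generation of $\varphi_\mu$.
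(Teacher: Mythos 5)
Your ``if'' direction is fine and matches the spirit of the paper's argument: triviality of $\CB_{\varphi_\mu}$ forces $M = \har(\CP_{\varphi_\mu})$ to be injective by \cite[Proposition 2.4]{DP20}, hence amenable by Connes.

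For the ``only if'' direction, however, you have chosen a genuinely different route from the paper's, and the route as sketched has a real gap. The paper does not go through entropy at all. Instead it proves (Lemma~\ref{3}) that a hypertrace lies in the weak$^*$ closure of the convex set $\mathcal{S}_0$ of hyperstates coming from finitely supported measures on $\mathcal{U}(M)$, then (Theorem~\ref{4}) uses Hahn--Banach separation and a careful telescoping construction $\varphi = \sum_i t_i\varphi_i$ (each $\varphi_i$ approximately $M$-central on a growing filtration of $(M)_1$) to produce $\varphi$ with $\varphi \ge \tfrac12\langle\cdot\,\hat1,\hat1\rangle$ and $\|\varphi^{*n}(x\,\cdot) - \varphi^{*n}(\cdot\,x)\|\to 0$ for all $x\in M$, after which Das--Peterson's criterion \cite[Theorem~2.10]{DP20} gives trivial boundary directly. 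Your plan instead aims at $h(\varphi_\mu)=0$ and invokes Theorem~A(iii). That is a legitimate strategy in principle, but the crucial estimate is not carried out and the heuristics offered for it do not obviously close the gap.

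Concretely, two things break down. First, you invoke the Haar-level identity $\nu_n * \nu_m = \nu_{\max(n,m)}$ to argue that convolution powers of $\mu = \sum_n a_n\mu_n$ concentrate mass on high indices; but after discretization $\mu_n*\mu_m$ is a uniform measure on a product set $F_n F_m$ which is \emph{not} any $\mu_k$, so ``$\mu^{*k}$ remains a convex combination of the $\mu_m$'s'' is simply false, and the concentration picture must be replaced by a genuine quantitative approximation argument that is not given. Second, there is a tension between the two parameters you want to push to extremes: taking $\epsilon_n\to0$ makes $\mu_n^{*k}$ converge to $\nu_n$, which is what you want for mixing, but it also makes $|F_n|\to\infty$ and hence $H(\mu_n)\to\infty$, threatening both $H(\varphi_\mu)<+\infty$ (a required hypothesis of Theorem~A(iii)) and the sublinearity of $H(\varphi_\mu^{*n})$; none of this trade-off is quantified, and the claim that ``a subadditive estimate \dots yields $h(\varphi_\mu)=0$'' is at present a placeholder for the entire hard part of the argument. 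You also still need to verify that your $\varphi_\mu$ is strongly generating (not just generating as a $*$-algebra), which in the paper is arranged explicitly by mixing in measures $\mu_n''$ supported on a dense set of unitaries; in your sketch this is attributed to the nets $F_n$ being ``generating'' but the relevant notion is the algebra generated by the (non-self-adjoint) set $\{z_j\}$, so symmetry of $F_n$ should be addressed. In short: the skeleton is reasonable, but the central entropy estimate is missing and is where all the work lives, whereas the paper sidesteps entropy entirely by proving asymptotic $M$-centrality.
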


The Choquet-Deny property of locally compact groups is a property stronger than amenability: A locally compact group $G$ is Choquet-Deny if for any admissible $\mu\in\prob(G)$, the $\mu$-Poisson boundary is trivial. Since the normal regular strongly generating hyperstate is a generalization of admissible measure, it's natural to define the Choquet-Deny property of tracial von Neumann algebras to be that any normal regular strongly generating hyperstate has trivial Poisson boundary. 

The classical Choquet-Deny theorem \cite{CD60} states that any abelian countable discrete group is Choquet-Deny, of which Das-Peterson \cite{DP20} proved an analogue, telling the same for abelian tracial von Neumann algebras. In Section 5, inspired by the latest progress regarding Choquet-Deny property of groups (\cite{Ja04} and \cite{FHTV19}), we prove a stronger result, providing a more comprehensive understanding of the Choquet-Deny property of tracial von Neumann algebras:

\begin{letterthm}
    Let $(M,\tau)$ be a tracial von Neumann algebra with separable predual. Then $(M,\tau)$ is Choquet-Deny if and only if $M$ is of type $\mathrm{I}$.  
\end{letterthm}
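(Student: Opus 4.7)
The plan is to prove both implications, combining Das--Peterson's noncommutative Poisson boundary machinery with the group-theoretic input of \cite{Ja04,FHTV19} and the ultraproduct/entropy tools from Theorem A.

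\textbf{Forward direction ($M$ type I $\Rightarrow$ $M$ Choquet--Deny).} By the tracial type I structure theorem, $M\cong\bigoplus_n L^\infty(X_n,\mu_n)\otimes M_{k_n}(\mathbb{C})$ with $k_n<\infty$. Given a normal regular strongly generating hyperstate $\varphi$, I would decompose $\mathcal{P}_\varphi$ along the center $Z(M)=\bigoplus_n L^\infty(X_n)$ and reduce to a single summand $M=L^\infty(X,\mu)\otimes M_k(\mathbb{C})$. My plan is to extend Das--Peterson's abelian Choquet--Deny argument: using the tensor factorization $L^2(M)\cong L^2(X)\otimes\mathrm{HS}(M_k)$ and the $M$-bimodularity of $\mathcal{P}_\varphi$, I would trace out the finite-dimensional matrix factor $M_k$, reducing a harmonic operator to a harmonic element for an induced random walk on $L^\infty(X,\mu)$, and then invoke the abelian Choquet--Deny result from \cite{DP20}. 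Reassembling the summands yields $\mathrm{Har}(\mathcal{P}_\varphi)=M$.

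\textbf{Converse ($M$ Choquet--Deny $\Rightarrow$ $M$ type I).} I would argue by contraposition, constructing a normal regular strongly generating hyperstate with nontrivial Poisson boundary when $M$ is not type I. After passing to a central summand I may assume $M$ is of type $\mathrm{II}_1$. The strategy, inspired by \cite{FHTV19}, is to find a countable subgroup $\Gamma\subset\mathcal{U}(M)$ that strongly generates $M$ as a von Neumann algebra and admits a probability measure $\mu_0$ with strictly positive Avez entropy; since $M$ is diffuse, $\mathcal{U}(M)$ contains an SOT-dense countable subgroup with ICC quotient, and for such $\Gamma$ one can choose $\mu_0$ with $h(\Gamma,\mu_0)>0$ by \cite{FHTV19}. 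Let $\mu$ be $\mu_0$ viewed on $\mathcal{U}(M)$ (perturbed by small weights on additional unitaries, if needed, to preserve strong generation) and form $\varphi_\mu$ as in Theorem B. By Theorem A(iii), it suffices to show $h(\varphi_\mu)>0$.

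The main obstacle is comparing the noncommutative entropy $h(\varphi_\mu)$ on $B(L^2(M))$ with the classical Avez entropy $h(\Gamma,\mu_0)$. I would exploit the ultraproduct construction of Theorem A: $(\mathcal{B}_{\varphi_\mu},\zeta)$ embeds into an ultraproduct $(\mathcal{A}^\omega,\varphi_\mu^\omega)$, and the countable subgroup $\Gamma$ provides a compatible ``classical'' subsystem whose Poisson boundary can be computed by Izumi's identification \cite{Iz04}. A relative-entropy argument on this embedding, together with a semicontinuity estimate controlling the effect of any small strongly-generating perturbation, should yield $h(\varphi_\mu)\geq h(\Gamma,\mu_0)>0$. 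The most delicate point will be transferring the classical positive-entropy lower bound through the noncommutative ultraproduct, and verifying the existence of a suitable $\Gamma$ for every non-type-I tracial $M$ with separable predual.
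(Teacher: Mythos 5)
Your plan for the forward direction differs from the paper's and glosses over the key reduction. The paper decomposes $M$ as a direct integral of finite matrix factors over the center, proves that $\har(\CP_\varphi)\subset Z(M)'$ (Proposition 5.5), deduces a direct-integral decomposition $\har(\CP_\varphi)=\int_X^\oplus\har(\CP_{\varphi_x})\,\d\mu_X(x)$ (Theorem 5.6), verifies that a.e.\ $\varphi_x$ is normal regular strongly generating, and then shows directly by a spectral argument that $M_n(\mathbb{C})$ is Choquet--Deny (Proposition 5.3). Your idea of ``tracing out'' the matrix factor so as to reduce to an abelian random walk on $L^\infty(X)$ and invoke the abelian Choquet--Deny result of \cite{DP20} is not clearly well-defined: the generators $z_n\in L^\infty(X)\otimes M_k(\mathbb{C})$ genuinely mix the two tensor factors, so $\CP_\varphi$ does not split along $L^2(X)\otimes\mathrm{HS}(M_k)$, nor is it clear that the putative reduced walk on $L^\infty(X)$ remains regular and strongly generating. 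The paper's route (restrict along the center, then argue inside a finite-dimensional $B(L^2(M_{n_x}))$) avoids all of this.

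The converse as proposed has a genuine gap. The paper never compares noncommutative asymptotic entropy with classical Avez entropy. Instead it applies Theorem B to conclude that a.e.\ central factor is amenable, hence (up to a null set) either $M_n(\mathbb{C})$ or the hyperfinite II$_1$ factor $L(S_\infty)$; it then uses Izumi's theorem \cite{Iz04}, which applies exactly when the group is the generating group of $M=L(\Gamma)$, to transplant a classical nontrivial harmonic function for $(S_\infty,\mu_0)$ into a harmonic operator $T_0\in\har(\CP_{\varphi_{\mu_0}})\setminus L(S_\infty)$, and finally patches $T_0$ into a harmonic operator outside $M$ for a globally strongly generating hyperstate. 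In your plan, two essential steps are neither proven nor provable from the tools in the paper: (a) the existence, for every non-type-I separable tracial $M$, of a countable SOT-dense subgroup $\Gamma\subset\mathcal{U}(M)$ admitting an ICC quotient and a positive-entropy measure--- the paper does not need this because Theorem B forces the non-type-I part to be $L^\infty\bar\otimes L(S_\infty)$, so one only ever needs $\Gamma=S_\infty$; (b) the estimate $h(\varphi_\mu)\geq h(\Gamma,\mu_0)$. Theorem A gives $h_\varphi(\CB_\varphi,\zeta)=h(\varphi)$, but it says nothing comparing $h(\varphi_\mu)$ to a classical Avez entropy, and Izumi's theorem does not apply when $\Gamma$ is merely a dense subgroup of $\mathcal{U}(M)$ rather than the generating group of $M=L(\Gamma)$. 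You also do not address the effect of the ``small strongly-generating perturbation'' on the asymptotic entropy, which need not be a lower-semicontinuous functional. In short, the paper replaces your proposed entropy transfer entirely by the amenability criterion of Theorem B and Izumi's identification; without proving (a) and (b), the proposed argument does not close.
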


Following \cite[Theorm 4.1]{Iz04}, for a countable discrete group $\Gamma$, the Choquet-Deny property of $L(\Gamma)$ can always induce the Choquet-Deny property of $\Gamma$. Note that when $\Gamma$ is finitely generated, $\Gamma$ is Choquet-Deny if and only if $\Gamma$ is virtually nilpotent (see \cite{Ja04} and \cite{FHTV19}). And $L(\Gamma)$ is of type $\mathrm{I}$ if and only if $\Gamma$ is virtually abelian (see \cite{THOM64}). The Heisenberg group $H_3(\mathbb{Z})$ is exactly such a finitely generated group that is virtually nilpotent but not virtually abelian. Hence for a countable discrete group $\Gamma$, the Choquet-Deny property of $L(\Gamma)$ is not equivalent to, but strictly stronger than the Choquet-Deny property of $\Gamma$. That's because the construction of the noncommutative Poisson boundary is more flexible than the construction of the classical Poisson boundary.

\section{Preliminaries}\label{Preliminaries}

\begin{hyperstate}
Fix a tracial von Neumann algebra $(M, \tau )$. Following \cite{DP20}, for a $\mathrm{C}^*$-algebra $\CA$ such that $M\subset\CA$ and a state $\psi \in \mathcal{S}(\CA)$, we say that $\psi$ is a $\tau$-\textbf{hyperstate} if $\psi|_M =\tau$. We denote by $\mathcal{S}_\tau(\CA)$ the set of $\tau$-hyperstates on $\CA$. Let $e_M\in B(L^2(\CA,\psi))$ be the orthogonal projection onto $L^2(M)$. The u.c.p. map $\CP_\psi:\CA\to B(L^2(M))$ is defined as 
$$\CP_\psi(T)=e_MTe_M, \ T\in \CA.$$
According to \cite[Proposition 2.1]{DP20}, $\psi\mapsto\CP_\psi$ is a bijection between (normal) hyperstates on $\CA$ and (normal) u.c.p. $M$-bimodular maps from $\CA$ to $B(L^2(M))$, whose inverse is $\CP \mapsto \langle\CP(\,\cdot\,)\hat{1},\hat{1}\rangle$. 

For $\psi\in\mathcal{S}_\tau(\CA)$ and $\varphi \in \mathcal{S}_\tau(B(L^2(M)))$, the convolution $\varphi \ast\psi\in \mathcal{S}_\tau(\CA)$ is defined to be the hyperstate associated to the $M$-bimodular u.c.p. map $\mathcal{P}_{\varphi}\circ \mathcal{P}_{\psi}$. And $\psi$ is said to be $\varphi$-stationary if $\varphi \ast\psi=\psi$.
\end{hyperstate}

\begin{NCPB}
Let $\varphi\in\mathcal{S}_\tau(B(L^2(M)))$ be a hyperstate. Following \cite{DP20}, the set of $\mathcal{P}_\varphi$\textbf{-harmonic operators} is defined to be
$$\mathrm{Har}(\mathcal{P}_\varphi)=\mathrm{Har}(B(L^2(M)),\mathcal{P}_\varphi)=\{ T\in B(L^2(M))\mid \mathcal{P}_\varphi(T)=T\}. $$
The \textbf{noncommutative Poisson boundary} $\mathcal{B}_\varphi$ of $M$ with respect to $\varphi$ is defined to be the noncommutative Poisson boundary of the u.c.p. map $\mathcal{P}_\varphi$ as defined by Izumi \cite{Izu02}, that is, the Poisson boundary $\mathcal{B}_\varphi$ is the unique $\mathrm{C}^*$-algebra (a von Neumann algebra when $\varphi$ is normal) that is isomorphic, as an operator system, to the space of harmonic operators $\mathrm{Har}(\mathcal{P}_\varphi)$. And the isomorphism $\CP:\CB_\varphi\to \mathrm{Har}(\mathcal{P}_\varphi)$ is called the $\varphi$-\textbf{Poisson transform}. Since $M\subset\mathrm{Har}(\mathcal{P}_\varphi)$, $M$ can also be embedded into $\CB_\varphi$ as a subalgebra. We said that $\CB_\varphi$ is trivial if $\CB_\varphi=M$.

According to \cite[Proposition 2.8]{DP20}, for a normal hyperstate $\varphi\in\mathcal{S}_\tau(B(L^2(M)))$, there exist a sequence $\{z_n\}\subset M$ such that $\sum_{n=1}^{\infty}z^*_nz_n=1$, and $\varphi$ and $\CP_\varphi$ admit the following standard form:
$$\varphi(T)=\sum_{n=1}^{\infty}\langle T  \hat{z}_n^*,\hat{z}_n^*\rangle, \ \mathcal{P}_\varphi(T)=\sum_{n=1}^{\infty} (Jz_n^*J)T(Jz_nJ), \ T\in B(L^2(M)).$$
Following \cite{DP20}, $\varphi$ is said to be 
\begin{itemize}
    \item \textbf{regular}, if $\sum_{n=1}^{\infty}\limits z_n^*z_n=\sum_{n=1}^{\infty}\limits z_nz_n^*=1$;
    \item \textbf{strongly generating}, if the unital algebra (rather than the unital $\ast$-algebra) generated by $\{z_n\}$ is weakly dense in $M$.
\end{itemize}
According to \cite{DP20}, when $\varphi$ is a normal regular strongly generating hyperstate, $\zeta:=\varphi\circ\CP\in\mathcal{S}_\tau(\CB_\varphi)$ is a normal faithful hyperstate on $\CB_\varphi$.

In this paper, we will focus on the Poisson boundaries of normal regular strongly generating hyperstates, which can be viewed as a generalization of admissible measures on countable discrete groups.  
\end{NCPB}

\begin{entropy}\label{NC entropy} Following \cite{DP20}, for a normal hyperstate $\varphi\in\mathcal{S}_\tau(B(L^2(M)))$, let $A_\varphi\in B(L^2(M))$ be the trace class operator associated to $\varphi$. The \textbf{entropy} of $\varphi$ is defined to be 
$$H(\varphi)=-\mathrm{Tr}(A_\varphi \log A_\varphi).$$
The \textbf{asymptotic entropy} of $\varphi$ is defined to be 
$$h(\varphi)=\lim_{n\to\infty}\frac{H(\varphi^{*n})}{n}.$$

For a von Neumann algebra $\CA$ such that $M\subset \CA$ and a normal faithful hyperstate $\zeta\in \mathcal{S}_\tau(\CA)$, let $\Delta_\zeta:L^2(\CA,\zeta)\to L^2(\CA,\zeta)$ be the modular operator of $(\CA,\zeta)$ and $e\in B(L^2(\CA,\zeta))$ be the orthogonal projection onto $L^2(M)$. The \textbf{Furstenberg-type entropy} of $(\CA,\zeta)$ with respect to $\varphi$ is defined to be 
$$h_\varphi(\CA,\zeta)=-\varphi(e\log\Delta_\zeta e).$$
\end{entropy}

\begin{Ultraproduct}\label{ultra} Following \cite{Oc85} and \cite{AH14}, for a sequence of $\sigma$-finite von Neumann algebras with normal faithful states $\{(\CA_n,\varphi_n)\}_{n\in \mathbb{N}}$ and an ultrafilter $\omega$ on $\mathbb{N}$, let 
\begin{equation}\label{UP}
\begin{aligned}
   l^\infty(\mathbb{N},\CA_n)&=\left\{(x_n)_n\in \prod_{n} \CA_n\mid\sup_n\Vert x_n \Vert<+\infty\right\},\\
   \mathcal{L}_\omega=\mathcal{L}_\omega(\CA_n,\varphi_n)&=\left\{(x_n)_n\in l^\infty(\mathbb{N},\CA_n)\mid\lim_\omega\varphi_n(x_n^*x_n)=0\right\},\\
   \mathcal{I}_\omega=\mathcal{I}_\omega(\CA_n,\varphi_n)&=\left\{(x_n)_n\in l^\infty(\mathbb{N},\CA_n)\mid\lim_\omega\varphi_n(x_n^*x_n+x_nx_n^*)=0\right\},\\
\mathcal{M}^\omega=\mathcal{M}^\omega(\CA_n,\varphi_n)&=\left\{(x_n)_n\in l^\infty(\mathbb{N},\CA_n)\mid(x_n)\mathcal{I}_\omega, \mathcal{I}_\omega(x_n)\subset\mathcal{I}_\omega\right\}.
\end{aligned}
\end{equation}
The \textbf{Ocneanu ultraproduct of $\{(\CA_n,\varphi_n)\}$ along $\omega$}, denoted by $(\CA_n,\varphi_n)^\omega$, is defined to be $(\CA^\omega, \varphi^\omega)=\mathcal{M}^\omega/\mathcal{I}_\omega$ the quotient 
$\mathrm{C}^*$-algebra with the quotient state, which can be proved to be a von Neumann algebra and a normal faithful state by the same proof as in \cite[\S 5.1]{Oc85}.

Let $\{(N_n,\pi_n,H_n)\}_{n\in \mathbb{N}}$ be a sequence of von Neumann algebras with normal faithful representation $\pi_n:N_n\to B(H_n)$. Let
$$\begin{aligned}
   l^\infty(\mathbb{N},H_n)&=\left\{(\xi_n)_n\in \prod_{n} H_n\mid\sup_n\Vert \xi_n \Vert<+\infty\right\},\\
   \mathcal{J}_\omega(\mathbb{N},H_n)&=\left\{(\xi_n)_n\in l^\infty(\mathbb{N},H_n)\mid\lim_\omega\Vert \xi_n \Vert=0\right\}.
\end{aligned}$$
The \textbf{ultraproduct of $(H_n)_n$ as Banach spaces} is defined to be
$$(H_n)_\omega:= l^\infty(\mathbb{N},H_n)/\mathcal{J}_\omega(\mathbb{N},H_n).$$
And $(H_n)_\omega$ is a Hilbert space with the inner product given by
$$\langle(\xi_n)_\omega,(\eta_n)_\omega\rangle=\lim_\omega\langle\xi_n,\eta_n\rangle, \ (\xi_n)_\omega,(\eta_n)_\omega\in (H_n)_\omega.$$
Let $(N_n)_\omega$ be the ultraproduct of $\{N_n\}_n$ as Banach spaces. Let $\pi_\omega:(N_n)_\omega\to B((H_n)_\omega)$ be the representation given by 
$$\pi_\omega((x_n)_\omega)((\xi_n)_\omega)=(x_n\xi_n)_\omega,\ (x_n)_n\in(N_n)_\omega,\ (\xi_n)_\omega\in (H_n)_\omega.$$
Then following \cite{Gr84}, \cite{Ra02} and \cite{AH14}, the \textbf{Groh-Raynaud ultraproduct of $(N_n,H_n)_n$ along $\omega$}, denoted by $\prod^\omega(N_n,H_n)$, is defined to be the weak closure of $\pi_\omega((N_n)_\omega)$ inside $B((H_n)_\omega)$.

Ando-Haagerup \cite{AH14} provided us with the relation between the two ultraproducts above: Let $(\CA_n,\varphi_n)$ be as above and $N:=\prod^\omega(\CA_n,L^2(\CA_n,\varphi_n))$ be the Groh-Raynaud ultraproduct. Let $\xi_{\varphi_n}\in L^2(\CA_n,\varphi_n)$ be the cyclic vector and $\xi_\omega=(\xi_{\varphi_n})_\omega\in(L^2(\CA_n,\varphi_n))_\omega$. Let $p\in N$ be the orthogonal projection onto $\overline{N' \xi_\omega}$ and define the normal state $\varphi_\omega$ on $N$ by $\varphi_\omega(x)=\langle x \xi_\omega,\xi_\omega\rangle$. 

Then according to \cite[Proposition 3.15]{AH14}, $\CA^\omega$ is $*$-isomorphic to $pNp$ with the isomorphism 
$$\theta:(x_n)^\omega\in \CA^\omega\mapsto p(x_n)_\omega p\in pNp.$$

Furthermore, $\theta: (\CA^\omega,\varphi^\omega)\to (pNp,\varphi_\omega|_{pNp})$ is also state preserving: By \cite[Lemma 3.13 and Proposition 3.14]{AH14}, since $\varphi_\omega|_{\mathcal{L}_\omega/\mathcal{I}_\omega+\mathcal{L}^*_\omega/\mathcal{I}_\omega}=0$, for any $x\in N$, we have
\begin{equation}\label{x pxp}
    \varphi_\omega(x)=\varphi_\omega(px)=\varphi_\omega(xp)=\varphi_\omega(pxp).
\end{equation}
Take $(x_n)_n\in \mathcal{M}^\omega$ and let $x=(x_n)_\omega$ in (\ref{x pxp}), we also have
\begin{equation}\label{x pxp2}
    \varphi_\omega(p(x_n)_\omega p)= \varphi_\omega((x_n)_\omega)=\langle (x_n)_\omega \xi_\omega,\xi_\omega\rangle=\lim_\omega\varphi_n(x_n)=\varphi^\omega((x_n)^\omega).
\end{equation}
Hence $\varphi_\omega(\theta((x_n)^\omega))=\varphi_\omega(p(x_n)_\omega p)=\varphi^\omega((x_n)^\omega)$. So $\theta: (\CA^\omega,\varphi^\omega)\to (pNp,\varphi_\omega|_{pNp})$ is a state preserving isomorphism.
\end{Ultraproduct}

\section{Entropy of noncommutative Poisson boundaries}
In classical Poisson boundary theory, for a countable discrete group $\Gamma$ and an admissible $\mu\in\prob(\Gamma)$ (i.e. $\cup_{n\in\mathbb{N}}(\supp \mu)^n=\Gamma$) with $H(\mu)<+\infty$, the following statements are Kaimanovich-Vershik's fundamental theorems regarding entropy (see \cite{KV82}):
\begin{itemize}
    \item [(a)] For any $(\Gamma,\mu)$-space $(X,\nu_X)$, one has $h_\mu(X,\nu_X)\leq h(\mu)$.
    \item [(b)] For the $\mu$-Poisson boundary $(B,\nu_B)$, the equality $h_\mu(B,\nu_B)= h(\mu)$ holds.
    \item [(c)] A $\mu$-boundary $(B_0,\nu_0)$ satisfies $h_\mu(B_0,\nu_0)= h_\mu(B,\nu_B)$ if and only if $(B_0,\nu_0)$ is measurably isomorphic to $(B,\nu_B)$.
    \item [(d)] The $\mu$-Poisson boundary $(B,\nu_B)$ is trivial (i.e. $B=\{*\}$ is a single point) if and only if $h(\mu)=0$.
\end{itemize}

In \cite{DP20}, Das-Peterson proved the noncommutative analogues of (a), a weaker version of (d) with $h(\mu)$ replaced by $h_\mu(B,\nu_B)$, and the ``if" direction of (d) as a direct corollary.

In this section, we will prove the noncommutative analogues of  (b)-(d). The method is inspired by \cite{SS22}, where Sayag-Shalom construct the classical Poisson boundaries through the ultraproduct of abelian $\mathrm{C}^*$-algebras and apply it to the proofs of (b) and some other results. For the Tomita-Takesaki theory involved in this section, we refer to \cite[Chapter VI-IX]{TakII} for details.

\begin{theorem}\label{ultra lim}
    Let $\varphi\in \mathcal{S}_\tau(B(L^2(M,\tau))$ be a normal regular strongly generating hyperstate, and $(\CB_\varphi,\zeta)$ be the Poisson boundary. Take an increasing sequence $\{a_n\}\subset\mathbb{R}$ satisfying $\frac{1}{2}\leq a_n< 1 $ and $\lim_n\limits a_n=1$, and an ultrafilter $\omega\in \beta \mathbb{N}\setminus \mathbb{N}$. Since $\varphi$ is strongly generating, we can take a sequence of von Neumann algebras with faithful normal hyperstates by 
    $$(\CA_n,\varphi_n)=(B(L^2(M)),(1-a_n)\sum_{k=0}^{\infty}a_n^k \varphi^{*k}),$$
    where $\varphi^{*0}(\,\cdot\,)=\langle\,\cdot\, \hat{1},\hat{1}\rangle$.
    Let $(\CA^\omega, \varphi^\omega)$ be the Ocneanu ultraproduct of $\{(\CA_n,\varphi_n)\}_n$ along $\omega$. Then $(\CB_\varphi,\zeta)$ can be embedded into $(\CA^\omega, \varphi^\omega)$ as a von Neumann subalgebra with normal conditional expectation and 
    $$h_\varphi(\CB_\varphi,\zeta)=h_\varphi(\CA^\omega, \varphi^\omega).$$
\end{theorem}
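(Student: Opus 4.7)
The plan is to exploit the Abel-averaging built into $\varphi_n$ so that $\varphi^\omega$ becomes $\varphi$-stationary in the ultralimit, and then realize $\CB_\varphi$ as a subalgebra of $\CA^\omega$ via constant sequences of harmonic operators. I would first establish the near-stationarity identity $\varphi\ast\varphi_n = \tfrac{1}{a_n}\varphi_n - \tfrac{1-a_n}{a_n}\varphi^{\ast 0}$, giving $\|\varphi\ast\varphi_n - \varphi_n\|_{\ast} = O(1-a_n)\to 0$. An immediate consequence is that for any $T\in\har(\CP_\varphi)$, one has $\varphi^{\ast k}(T)=\varphi(T)$ for every $k\ge 1$, so $\varphi_n(T)=\varphi(T)$ uniformly in $n$.

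Next I would define $\iota:\CB_\varphi\to\CA^\omega$ by $\iota(b):=((\CP(b))_n)^\omega$, the Ocneanu class of the constant sequence of the Poisson transform. State-preservation $\varphi^\omega\circ\iota=\zeta$ is then immediate from the previous observation. The essential verification is multiplicativity. Writing $T_i=\CP(b_i)$, it suffices to show $\lim_\omega\varphi_n(X^*X)=\lim_\omega\varphi_n(XX^*)=0$ for $X:=T_1T_2-\CP(b_1b_2)$, together with the analogous $\mathcal{M}^\omega$-membership of $(\CP(b))_n$. Since harmonic operators lie in the multiplicative domain of the canonical ucp projection $\pi:B(L^2(M))\to\CB_\varphi$, a direct expansion yields $\pi(X^*X)=\pi(XX^*)=0$. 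Expanding $\varphi_n(Y)=(1-a_n)\sum_{k} a_n^k\langle\CP_\varphi^k(Y)\hat{1},\hat{1}\rangle$ and combining Abel summation with the Ces\`aro convergence $\CP_\varphi^k(Y)\to\CP(\pi(Y))$ in the ultraweak topology, one extracts the desired vanishing.

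For the normal conditional expectation, I would apply Takesaki's theorem: it suffices to check that $\iota(\CB_\varphi)$ is globally invariant under $\sigma^{\varphi^\omega}_t$, which follows from the compatibility of modular data in the Ocneanu ultraproduct (Section 2.4) together with the $\sigma^{\zeta}_t$-invariance of $\CB_\varphi$ from Das--Peterson's theory. For the entropy equality, the inclusion-with-expectation yields $h_\varphi(\CB_\varphi,\zeta)\le h_\varphi(\CA^\omega,\varphi^\omega)$ by monotonicity of the Furstenberg entropy; the reverse follows by writing $h_\varphi(\CA^\omega,\varphi^\omega)=-\varphi^\omega(e\log\Delta_{\varphi^\omega}e)$ and using $\varphi$-stationarity to show the relevant modular action factors through $\iota(\CB_\varphi)$, reducing to $h_\varphi(\CB_\varphi,\zeta)$. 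The main obstacle will be the multiplicativity step: controlling the Abel-weighted sums $\langle\CP_\varphi^k(X^*X)\hat{1},\hat{1}\rangle$ in the appropriate topology, and verifying the $\mathcal{M}^\omega$-membership of constant harmonic sequences without access to tracial monotonicity for the non-tracial states $\varphi_n$.
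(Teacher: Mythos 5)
There is a genuine gap, and it sits exactly at the spot you flag as ``the main obstacle'': the map $\iota(b)=((\CP(b))_n)^\omega$ is not obviously well-defined, because the constant sequence $(\CP(b))_n$ of a harmonic operator need not lie in the Ocneanu multiplier algebra $\mathcal{M}^\omega$. Membership in $\mathcal{M}^\omega$ requires $\lim_\omega\varphi_n(T y_n y_n^* T^*)=0$ whenever $\lim_\omega\varphi_n(y_ny_n^*)=0$, and since the $\varphi_n$ are not traces and $T$ is not in $M$ (hence not in the multiplicative-domain argument that the paper uses for $\Delta(M)$), there is no a priori control of $\varphi_n(T\,\cdot\,T^*)$ by $\varphi_n$. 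The paper only manages to show $\Delta(M)\subset\mathcal{M}^\omega$, using the identity $\varphi\ast\varphi_n=\tfrac{1}{a_n}\varphi_n-\tfrac{1-a_n}{a_n}\varphi^{*0}$ together with strong generation to push $\mathcal{L}_\omega\Delta(z_k^*)\subset\mathcal{L}_\omega$; this argument exploits that the $z_k$ lie in $M$ and does not extend to $\har(\CP_\varphi)$. Your planned workaround via Abel/Ces\`aro averaging of $\CP_\varphi^k(Y)$ addresses multiplicativity in the sense of showing the obstruction $X$ has small $\varphi_n$-norm, but it does not establish the multiplier condition on $(\CP(b))_n$ in the first place, so the object you are manipulating is not yet an element of $\CA^\omega$.

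The paper sidesteps this entirely by routing through the Groh--Raynaud ultraproduct $N=\prod^\omega(\CA_n,L^2(\CA_n,\varphi_n))$, which accepts the \emph{unconditional} diagonal embedding $\Delta_0:\har(\CP_\varphi)\to N$, and then compressing by the Ando--Haagerup projection $p$ to land in $pNp\cong\CA^\omega$. The resulting $\Bar{\Delta}=\theta^{-1}(p\Delta_0(\cdot)p)$ is only u.c.p., not a priori a $*$-homomorphism; multiplicativity is then obtained indirectly: one shows $\CP_{\varphi^\omega}\circ\Bar{\Delta}=\id_{\har(\CP_\varphi)}$, so $E=\Bar{\Delta}\circ\CP_{\varphi^\omega}$ is a state-preserving idempotent u.c.p.\ map, whence $\Bar{\Delta}(\har(\CP_\varphi))=\har(E)$ is a $\mathrm{C}^*$-subalgebra, and finally $\Bar{\Delta}$ and $\CP_{\varphi^\omega}|_{\CB}$ are mutually inverse complete isometries, so by Choi's observation on bijective u.c.p.\ maps with u.c.p.\ inverses one gets a $*$-isomorphism. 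If you want to salvage your approach, you would need to either prove directly that constant harmonic sequences are $\varphi^\omega$-multipliers (which I do not see how to do with the available stationarity estimate), or else adopt the Groh--Raynaud detour as the paper does. Your entropy argument also leans on a vague ``modular action factors through $\iota(\CB_\varphi)$''; the precise mechanism in the paper is that the normal conditional expectation onto $\CB$ gives $\Delta_{(\CB,\varphi^\omega)}^{it}=\Delta_{(\CA^\omega,\varphi^\omega)}^{it}|_{L^2(\CB,\varphi^\omega)}$ by Takesaki's theorem, after which the compression by $e_M$ is identical for the two modular operators.
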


\begin{proof}
    Let $l^\infty(\mathbb{N},\CA_n)$, $\mathcal{L}_\omega$, $\mathcal{I}_\omega$ and $\mathcal{M}^\omega$ be as in (\ref{UP}). Then $\mathcal{L}_\omega$ is a left ideal of $l^\infty(\mathbb{N},\CA_n)$ and $(\CA^\omega, \varphi^\omega)=\mathcal{M}^\omega/\mathcal{I}_\omega$. For $(x_n)_n \in \mathcal{M}^\omega$, denote by $(x_n)^\omega\in \CA^\omega$ the image of $(x_n)_n $.

    Let $\Delta: B(L^2(M))\to l^\infty(\mathbb{N},\CA_n)$ be the diagonal embedding. First, let's prove that $\mathcal{L}_\omega\Delta(M)\subset \mathcal{L}_\omega$. 
     
    Assume that $\varphi=\sum_{k=1}^{\infty}\langle \, \cdot \,  \hat{z}_k^*,\hat{z}_k^*\rangle$ where $\sum_{k=1}^{\infty}z_kz_k^*=\sum_{k=1}^{\infty}z_k^*z_k=1$ and $\{z_k\}\subset M$ generates $M$ as a weakly closed unital subalgebra.
    
    Fix a $(x_n)_n \in \mathcal{L}_\omega$. Since
    \begin{equation}\label{phi_n phi}
    \varphi*\varphi_n=\frac{1}{a_n}\varphi_n-\frac{1-a_n}{a_n}\langle \, \cdot \,  \hat{1},\hat{1}\rangle    
    \end{equation}
    and $\lim_\omega\limits a_n=1$, we have $\lim_\omega\limits(\varphi*\varphi_n)(x^*_nx_n)=\lim_\omega\limits\varphi_n(x^*_nx_n)=0$. Note that for any $j\geq 1$,
    $$(\varphi*\varphi_n)(x^*_nx_n)=\sum_{k=1}^{\infty}\langle\mathcal{P}_{\varphi_n}(x^*_nx_n)\hat{z}_k^*,\hat{z}_k^*\rangle\geq \langle\mathcal{P}_{\varphi_n}(x^*_nx_n)\hat{z}_j^*,\hat{z}_j^*\rangle= \varphi_n (z_jx^*_nx_nz_j^*).$$
    Hence we also have $\lim_\omega\limits\varphi_n (z_jx^*_nx_nz_j^*) =0 $. Therefore, $\mathcal{L}_\omega\Delta(z^*_j)\subset\mathcal{L}_\omega$ for any $j\geq 1$. Let $\mathbb{C}[\{z^*_k\}]$ be the unital algebra generated by $\{z^*_k\}$. Then we have $\mathcal{L}_\omega \Delta(\mathbb{C}[\{z^*_k\}])\subset\mathcal{L}_\omega$. Hence for $z\in \mathbb{C}[(z^*_k)]$ and $(x_n)_n \in \mathcal{L}_\omega$, we have
    \begin{equation}\label{zxxz}
      \lim_\omega\limits\varphi_n (zx^*_nx_nz^*) =\lim_\omega\limits\Vert\CP_{\varphi_n}(x^*_nx_n)^{\frac{1}{2}}z^*\hat{1}\Vert^2=0.    
    \end{equation}
     
     Since $\varphi$ is strongly generating, $\mathbb{C}[\{z^*_k\}]$ is weakly dense in $M$, inducing that it is also strongly dense in $M$ (for a unital subalgebra, self-adjointness of its weak closure is enough to prove the von Neumann bicommutant theorem). So (\ref{zxxz}) also holds for any $z\in M$. Hence we have $\mathcal{L}_\omega\Delta(M)\subset \mathcal{L}_\omega$.

     Since $\mathcal{I}_\omega=\mathcal{L}_\omega \cap \mathcal{L}^*_\omega$ and $\mathcal{L}_\omega\Delta(M)\subset \mathcal{L}_\omega$, we have $\Delta(M)\subset \mathcal{M}^\omega$. Hence we can embed $(M,\tau)$ into $(\CA^\omega,\varphi^\omega)$ through the diagonal map: $x\in M\mapsto \Delta(x)+\mathcal{I}_\omega\in \CA^\omega$. And by (\ref{phi_n phi}), we know that $\varphi^\omega$ is $\varphi$-stationary.

     Let $N:=\prod^\omega(\CA_n,L^2(\CA_n,\varphi_n))$ be the Groh-Raynaud ultraproduct and define $\varphi_\omega$ and $p$ as in \ref{ultra}. As in \ref{ultra}, we can identify $(\CA^\omega,\varphi^\omega)$ with $(pNp,\varphi_\omega|_{pNp})$ through the state preserving isomorphism $$\theta:(x_n)^\omega\in \CA^\omega\mapsto p(x_n)_\omega p\in pNp.$$

    Let $\Delta_0:(\har(\CP_\varphi),\varphi)\to (N,\varphi_\omega)$ be the diagonal map. Since $\varphi_n|_{\har(\CP_\varphi)}=\varphi|_{\har(\CP_\varphi)}$, we know that for $T\in\har(\CP_\varphi)$,
    \begin{equation}\label{phi D_0}
        \varphi_\omega(\Delta_0(T))=\lim_\omega \varphi_n(T)=\varphi(T).
    \end{equation}
      
    Define $\Bar{\Delta}: (\har(\CP_\varphi),\varphi)\to (\CA^\omega,\varphi^\omega)$ by 
      $$\Bar{\Delta}(T)=\theta^{-1}(p\Delta_0(T)p),\ T\in \har(\CP_\varphi).$$
    Then by (\ref{x pxp}), (\ref{phi D_0}) and $\varphi_\omega|_{pNp}=\varphi^\omega\circ\theta^{-1}$, for $T\in \har(\CP_\varphi)$, we have
      $$\varphi(T)=\varphi_\omega(\Delta_0(T))=\varphi_\omega(p\Delta_0(T)p)=\varphi^\omega(\theta^{-1}(p\Delta_0(T)p))=\varphi^\omega(\Bar{\Delta}(T)).$$ 
    Hence $\Bar{\Delta}: (\har(\CP_\varphi),\varphi)\to (\CA^\omega,\varphi^\omega)$ is state preserving. But the normality of $\Bar{\Delta}$ is unknown yet. Also note that for $x\in M$, 
      $$\Bar{\Delta}(x)=\theta^{-1}(p(x)_\omega p)=(x)^\omega=\Delta(x)+\mathcal{I}_\omega.$$
    Hence $\Bar{\Delta}|_M$ is exactly the natural embedding: $x\in M\mapsto \Delta(x)+\mathcal{I}_\omega\in \CA^\omega$. Moreover, by \cite[Theorem 3.1]{Ch74} (although $\har(\CP_\varphi)$ is not a $\mathrm{C}^*$-algebra, the same proof works), $M$ is in the multiplicative domain of $\Bar{\Delta}$, that is, $$\Bar{\Delta}(aTb)=\Bar{\Delta}(a)\Bar{\Delta}(T)\Bar{\Delta}(b)$$ 
    for any $a, b\in M$ and $T\in \har(\CP_\varphi)$.

    Let's prove that $\CP_{\varphi^\omega}(\Bar{\Delta}(T))=T$ for any $T\in\har(\CP_\varphi)$. Only need to prove for any $a,b\in M$, we have $\varphi^\omega(\Bar{\Delta}(a)\Bar{\Delta}(T)\Bar{\Delta}(b))=\langle aTb\hat{1},\hat{1}\rangle$.

    Identifying $(\CA^\omega,\varphi^\omega)$ with $(pNp,\varphi_\omega|_{pNp})$, we have 
$$\varphi^\omega(\Bar{\Delta}(a)\Bar{\Delta}(T)\Bar{\Delta}(b))=\varphi^\omega(\Bar{\Delta}(aTb))=\varphi_\omega(p\Delta_0(aTb)p).$$
Moreover, by (\ref{x pxp}), we have
$$\varphi^\omega(\Bar{\Delta}(a)\Bar{\Delta}(T)\Bar{\Delta}(b))=\varphi_\omega(p\Delta_0(aTb)p)=\varphi_\omega(\Delta_0(aTb))=\lim_\omega \varphi_n(aTb)=\langle aTb\hat{1},\hat{1}\rangle.$$
    The last equality holds because $aTb\in \har(\CP_\varphi)\subset \har(\CP_{\varphi_n})$. Hence we must have $\CP_{\varphi^\omega}(\Bar{\Delta}(T))=T$ for any $T\in \har(\CP_\varphi)$.

    Let $(\CB,\varphi^\omega):=(\Bar{\Delta}(\har(\CP_\varphi)),\varphi^\omega)\subset(\CA^\omega,\varphi^\omega)$. Now let's prove that $(\CB,\varphi^\omega)$ is a von Neumann subalgebra isomorphic to $(\CB_\varphi,\zeta)$. 

    Since $\varphi^\omega$ is $\varphi$-stationary, we have $\CP_{\varphi^\omega}(\CA^\omega)\subset\har(\CP_\varphi)$. So we can define 
    $$E:=\Bar{\Delta}\circ \CP_{\varphi^\omega}:\CA^\omega\to \CA^\omega.$$ 
    Since $\CP_{\varphi^\omega}\circ\Bar{\Delta}=\id_{\har(\CP_\varphi)}$, we have $E^2=E$. So $E$ is a conditional expectation from $\CA^\omega$ onto $\Bar{\Delta}(\har(\CP_\varphi))$. Note that $E$ also preserves the faithful state $\varphi^\omega$. According to a well-known result \cite[Lemma 2.6]{DP20}, which also appears in earlier literature like \cite{FNW94}, \cite[Lemma 3.4]{BJKW00} and \cite[Lemma 3.1]{CD20}, $\CB=\Bar{\Delta}(\har(\CP_\varphi))=\har(E)(=\{T\in \CA^\omega\mid E(T)=T\})$ is a $\mathrm{C}^*$-subalgebra of $\CA^\omega$.

    Since for any $(T_{ij})_{n\times n}\in M_n(\har(\CP_\varphi))$, 
    $$\Vert (T_{ij})\Vert=\Vert \CP_{\varphi^\omega}(\Bar{\Delta}((T_{ij})))\Vert\leq\Vert \Bar{\Delta}((T_{ij}))\Vert \leq \Vert (T_{ij})\Vert,$$
    we have that both $\Bar{\Delta}$ and $\CP_{\varphi^\omega}|_\CB $ are completely isometric. Therefore, $(\CB,\varphi^\omega)$ is also a Poisson boundary of $\varphi$ and $\CP_{\varphi^\omega}|_\CB:(\CB,\varphi^\omega)\to (\har(\CP_\varphi),\varphi)$ is the Poisson transform.

    Let $\mathcal{P}:(\CB_\varphi,\zeta)\to (\har(\CP_\varphi),\varphi)$ be the Poisson transform. Define $\Phi:(\CB_\varphi,\zeta)\to(\CB,\varphi^\omega)$ by $\Phi=\Bar{\Delta}\circ\mathcal{P}=(\CP_{\varphi^\omega}|_\CB)^{-1}\circ\mathcal{P}$. Following \cite[Remark 3.4]{Ch74}, a bijective u.c.p. map between $\mathrm{C}^*$-algebras with a u.c.p. inverse is necessarily a $\mathrm{C}^*$-isomorphism. Hence $\Phi:\CB_\varphi\to\CB$ is a $\mathrm{C}^*$-isomorphism. Since both $\Bar{\Delta}$ and $\CP$ are state preserving, we know that $\Phi:(\CB_\varphi,\zeta)\to(\CA^\omega,\varphi^\omega)$ preserves the normal faithful state $\zeta$. Hence $\Phi:(\CB_\varphi,\zeta)\to(\CA^\omega,\varphi^\omega)$ is normal, $\CB=\Phi(\CB_\varphi)$ is a von Neumann subalgebra of $\CA^\omega$ and $\Phi:\CB_\varphi\to\CB$ is a $\mathrm{W}^*$-isomorphism that embeds $(\CB_\varphi,\zeta)$ into $(\CA^\omega,\varphi^\omega)$. And by \cite[Theorem IX.4.2]{TakII}, the conditional expectation $E:\CA^\omega\to \CB$ is normal.

    Now let's prove that $h_\varphi(\CB_\varphi,\zeta)=h_\varphi(\CA^\omega, \varphi^\omega)$. By the isomorphism, we only need to prove that $h_\varphi(\CB,\varphi^\omega)=h_\varphi(\CA^\omega, \varphi^\omega)$.

    Note that we have $(M,\tau)\subset(\CB,\varphi^\omega)\subset(\CA^\omega,\varphi^\omega)$ by identifying $M$ with $\Bar{\Delta}(M)$. According to \cite[Lemma 5.4]{DP20}, for $(M,\tau)\subset(\mathcal{A},\nu)$ we have
    \begin{equation}\label{Deltait}
    h_\varphi(\mathcal{A},\nu)=i\sum_{k=1}^{\infty}\left\langle \lim_{t\to0} \frac{\Delta_{(\mathcal{A},\nu)}^{it}-1}{t}\hat{z}_k^*,\hat{z}_k^*\right\rangle,
    \end{equation}
    where $\Delta_{(\mathcal{A},\nu)}$ is the modular operator of $(\mathcal{A},\nu)$. 

    By (\ref{Deltait}) we only need to prove $e_M\Delta_{(\CB,\varphi^\omega)}^{it}e_M=e_M\Delta_{(\CA^\omega,\varphi^\omega)}^{it}e_M$, where $e_M$ is the orthogonal projection from $L^2(\CA^\omega,\varphi^\omega)$ to $L^2(M,\tau)$.

   Recall that $E=\Bar{\Delta}\circ \CP_{\varphi^\omega}:\CA^\omega\to \CB$ is a normal conditional expectation. According to \cite[Theorem IX.4.2]{TakII}, we have $\Delta_{(\CB,\varphi^\omega)}^{it}=\Delta_{(\CA^\omega,\varphi^\omega)}^{it}|_{L^2(\CB,\varphi^\omega)}$.

   Hence $e_M\Delta_{(\CB,\varphi^\omega)}^{it}e_M=e_Me_\CB\Delta_{(\CA^\omega,\varphi^\omega)}^{it}e_\CB e_M=e_M\Delta_{(\CA^\omega,\varphi^\omega)}^{it}e_M$ and
   $$h_\varphi(\CB,\varphi^\omega)=i\sum_{k=1}^{\infty}\left\langle \lim_{t\to0} \frac{\Delta_{(\CB,\varphi^\omega)}^{it}-1}{t}\hat{z}_k^*,\hat{z}_k^*\right\rangle=i\sum_{k=1}^{\infty}\left\langle \lim_{t\to0} \frac{\Delta_{(\CA^\omega,\varphi^\omega)}^{it}-1}{t}\hat{z}_k^*,\hat{z}_k^*\right\rangle=h_\varphi(\CA^\omega,\varphi^\omega).$$

    Therefore, we have
    $$h_\varphi(\CB_\varphi,\zeta)=h_\varphi(\CB ,\varphi^\omega)=h_\varphi(\CA^\omega,\varphi^\omega).$$
\end{proof}

\begin{lemma}\label{1+t}
   With the same notation as above, when $n=\omega$, define $(\CA_n,\varphi_n)$ to be $(\CA^\omega,\varphi^\omega)$. For $n\in \mathbb{N}\cup \{\omega\}$, let $\Delta_{\varphi_n}$ be the modular operator of $(\CA_n,\varphi_n)$, $e_n$ be the orthogonal projection from $L^2(\CA_n, \varphi_n)$ to $L^2(M,\tau)$ and $A_{\varphi} \in B(L^2(M))$ be the positive trace class operator associated to ${\varphi}$. Note that $B(L^2(M))= e_n B(L^2(\CA_n,\varphi_n))e_n$. Then for any $n\in \mathbb{N}\cup \{\omega\}$ and $t>0$, within $B(L^2(M))$, we have 
   $$(1+t)^{-1} \leq e_n(\Delta_{\varphi_n}+t)^{-1}e_n\leq (\frac{1}{2}A_{\varphi}+t)^{-1}.$$
   \end{lemma}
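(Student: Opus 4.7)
I would first identify $L^2(\CA_n,\varphi_n)$ with the Hilbert--Schmidt class $HS(L^2(M))$ with cyclic vector $\rho_n^{1/2}$ (writing $\rho_n:=A_{\varphi_n}$), so that $\Delta_{\varphi_n}(\xi)=\rho_n\xi\rho_n^{-1}$ and the embedding $L^2(M,\tau)\hookrightarrow L^2(\CA_n,\varphi_n)$ sends $\hat x$ to $x\rho_n^{1/2}$; this is isometric because $\varphi_n|_M=\tau$, which in turn follows from the fact that $\CP_\varphi$ acts as the identity on $M$, so that $\varphi^{*k}|_M=\tau$ for every $k$. For $n=\omega$ the same picture persists via the Ando--Haagerup identification of $(\CA^\omega,\varphi^\omega)$ as a corner $pNp$ of the Groh--Raynaud ultraproduct used in the proof of Theorem~\ref{ultra lim}, so that both inequalities can be expected to be inherited from the finite stages by taking the ultrafilter limit (the resolvents $(\Delta_{\varphi_n}+t)^{-1}$ are uniformly bounded by $t^{-1}$).

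\textit{Lower bound.} A direct computation using cyclicity of the trace $\mathrm{Tr}$ on $B(L^2(M))$ and traciality of $\tau$ on $M$ gives, for $x,y\in M$,
\[\langle\Delta_{\varphi_n}\hat x,\hat y\rangle_{HS}=\mathrm{Tr}(\rho_n xy^*)=\varphi_n(xy^*)=\tau(xy^*)=\tau(y^*x)=\langle\hat x,\hat y\rangle,\]
so $e_n\Delta_{\varphi_n}e_n=e_n$ as operators on $L^2(\CA_n,\varphi_n)$. Since $x\mapsto(x+t)^{-1}$ is operator convex on $(0,\infty)$, Choi's operator Jensen inequality applied to the u.c.p. compression $T\mapsto e_nTe_n$ from $B(L^2(\CA_n,\varphi_n))$ onto $B(L^2(M))$ yields
\[e_n(\Delta_{\varphi_n}+t)^{-1}e_n\geq(e_n\Delta_{\varphi_n}e_n+t)^{-1}=(1+t)^{-1}.\]

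\textit{Upper bound.} I would exploit the Schur complement. Writing $\Delta_{\varphi_n}=\bigl(\begin{smallmatrix}e_n & B\\ B^* & C\end{smallmatrix}\bigr)$ with respect to the decomposition $L^2(\CA_n,\varphi_n)=L^2(M)\oplus L^2(M)^\perp$, one has
\[\bigl(e_n(\Delta_{\varphi_n}+t)^{-1}e_n\bigr)^{-1}=(1+t)-B(C+t)^{-1}B^*,\]
so the upper bound is equivalent to the operator inequality $B(C+t)^{-1}B^*\leq 1-\tfrac12 A_\varphi$ on $L^2(M)$. I would aim to establish this by combining (i) the explicit description $B^*\hat x=\Delta_{\varphi_n}\hat x-\hat x$ for $\hat x\in L^2(M)$, which yields $\|B^*\hat x\|^2=\|\Delta_{\varphi_n}\hat x\|^2-\|\hat x\|^2$, and (ii) the fixed-point relation $\rho_n=(1-a_n)|\hat 1\rangle\langle\hat 1|+a_n\sum_k z_k^*\rho_n z_k$ obtained from $\varphi_n=(1-a_n)\delta_0+a_n\varphi*\varphi_n$, in combination with the hypothesis $a_n\geq\tfrac12$.

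The main obstacle is the upper bound. The coefficient $\tfrac12$ cannot come from a direct comparison $\rho_n\geq cA_\varphi$: the fixed-point series yields only $\rho_n\geq a_n(1-a_n)A_\varphi$, whose coefficient is bounded above by $1/4$. So the argument must genuinely exploit the modular structure, using the interplay between $\rho_n$ and its (possibly unbounded) inverse $\rho_n^{-1}$ in $\Delta_{\varphi_n}=L_{\rho_n}R_{\rho_n^{-1}}$ together with $a_n\geq\tfrac12$, most likely via a Cauchy--Schwarz-type estimate in the trace pairing that converts $\rho_n$-weighted terms into $A_\varphi$-weighted ones.
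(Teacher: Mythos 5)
Your lower bound is correct and essentially the paper's argument in slightly different clothing: the paper applies the scalar Jensen inequality to the spectral measure of $\Delta_{\varphi_n}$ against the vector state $\langle\,\cdot\,z,z\rangle$, using $e_n\Delta_{\varphi_n}e_n=1$, while you invoke Choi's operator Jensen inequality directly; both are fine (the paper's spectral formulation avoids the unbounded-operator care that Choi's inequality formally requires).

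The upper bound, however, is genuinely incomplete, as you acknowledge, and the route you sketch misses the key device. You are correct that a comparison of density matrices $\rho_n\geq cA_\varphi$ only produces $c=a_n(1-a_n)\leq 1/4$ and cannot yield $1/2$; but the paper does not compare $\rho_n$ with $A_\varphi$ at all, and it does not set up a Schur complement either. Instead it introduces the map $T:L^2(\CA_n,\varphi_n)\to L^2(M,\tau)$ defined by $T(a\xi_{\varphi_n})=\CP_{\varphi_n}(a)\hat 1$ and proves the unbounded operator inequality $\Delta_{\varphi_n}\geq \tfrac12\,T^*A_\varphi T$ on its domain by a quadratic-form comparison: one has $\Vert\Delta_{\varphi_n}^{1/2}a\xi_{\varphi_n}\Vert^2=\langle\CP_{\varphi_n}(aa^*)\hat 1,\hat 1\rangle$ and $\Vert A_\varphi^{1/2}Ta\xi_{\varphi_n}\Vert^2\leq\langle(\CP_\varphi\circ\CP_{\varphi_n})(aa^*)\hat 1,\hat 1\rangle$, and then the renewal identity $\CP_\varphi\circ\CP_{\varphi_n}=\tfrac{1}{a_n}\CP_{\varphi_n}-\tfrac{1-a_n}{a_n}\mathrm{id}\leq\tfrac{1}{a_n}\CP_{\varphi_n}\leq 2\,\CP_{\varphi_n}$ (using $a_n\geq\tfrac12$) closes the estimate. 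This is where the factor $\tfrac12$ comes from: it is an $L^2$-comparison of the two sesquilinear forms, not a comparison of density operators, which is precisely the ``Cauchy--Schwarz-type estimate in the trace pairing'' you speculated might be needed. Once $\Delta_{\varphi_n}\geq\tfrac12 T^*A_\varphi T$ is established, operator antimonotonicity of $x\mapsto(x+t)^{-1}$ gives $(\Delta_{\varphi_n}+t)^{-1}\leq(\tfrac12 T^*A_\varphi T+t)^{-1}$, and because $T$ restricts to the identity on $L^2(M)$ and the eigenvectors of $A_\varphi$ lie in $L^2(M)$, the compression $e_n(\tfrac12 T^*A_\varphi T+t)^{-1}e_n$ equals $(\tfrac12 A_\varphi+t)^{-1}$ exactly, with no Schur complement needed. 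The $n=\omega$ case is handled by appeal to the stationary case (\cite[Lemma 5.12]{DP20}), not by passing to the ultralimit of the finite-$n$ inequalities as you suggest.
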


\begin{proof}
    Let $\{E(\lambda)\}_{\lambda\geq 0}$ be the spectral measure of $\Delta_{\varphi_n}$. Since $e_n\Delta_{\varphi_n}e_n=\Delta_{(M,\tau)}=1$, for any $z\in L^2(M,\tau)$ such that $\Vert z \Vert_\tau=1$, we have 
    $$
    \begin{aligned}
    \langle e_n(\Delta_{\varphi_n}+t)^{-1}e_n z,z\rangle=&\int_0^{+\infty}(\lambda+t)^{-1}\d \langle E(\lambda)z,z\rangle\\
    \geq &\left(\int_0^{+\infty}\lambda\d \langle E(\lambda)z,z\rangle+t\right)^{-1}\\
    =&(\langle \Delta_{\varphi_n}z,z\rangle+t)^{-1}\\
    =&(\langle e_n\Delta_{\varphi_n}e_nz,z\rangle+t)^{-1}\\
    =&(1+t)^{-1}\\
    =&\langle (1+t)^{-1}z,z\rangle.
    \end{aligned}
    $$
    The second inequality holds for the convexity of $x\mapsto(x+t)^{-1}$. So now we have $ e_n(\Delta_{\varphi_n}+t)^{-1}e_n\geq (1+t)^{-1}$ within $B(L^2(M))$.

    Now let's prove $e_n(\Delta_{\varphi_n}+t)^{-1}e_n\leq (\frac{1}{2}A_\varphi+t)^{-1}$. First, we prove that $\Delta_{\varphi_n}\geq \frac{1}{2}T^*A_\varphi T $ on $D(\Delta_{\varphi_n})$, where $T:L^2(\CA_n,\varphi_n)\to L^2(M,\tau)$ is defined by $T(a \xi_{\varphi_n})=\mathcal{P}_{\varphi_n}(a)\hat{1} \ (a\in \CA_n)$ and $\xi_{\varphi_n}$ is the cyclic vector of $L^2(\CA_n,\varphi_n)$.

    When $n=\omega$, since $\varphi^\omega$ is $\varphi$-stationary, according to the proof of \cite[Lemma 5.12]{DP20}, we have $\Delta_{\varphi_n}\geq T^*A_\varphi T\geq \frac{1}{2}T^*A_\varphi T$ on $D(\Delta_{\varphi_n})$.

    When $n\in \mathbb{N}$, following the same discussion of the proof of \cite[Lemma 5.12]{DP20}, we still have that for $a \in \CA_n$, 
    $$\Vert \Delta_{\varphi_n}^{1/2} a \xi_{\varphi_n}\Vert^2=\langle\mathcal{P}_{\varphi_n}(aa^*)\hat{1},\hat{1}\rangle, $$
    $$\Vert A_{\varphi}^{1/2}T a \xi_{\varphi_n}\Vert^2\leq\langle(\mathcal{P}_{\varphi}\circ\mathcal{P}_{\varphi_n})(aa^*)\hat{1},\hat{1}\rangle.$$
    Since $\varphi*\varphi_n=\frac{1}{a_n}\varphi_n-\frac{1-a_n}{a_n}\langle \, \cdot \,  \hat{1},\hat{1}\rangle$, we have $\mathcal{P}_{\varphi}\circ\mathcal{P}_{\varphi_n}=\frac{1}{a_n} \mathcal{P}_{\varphi_n}-\frac{1-a_n}{a_n}\mathrm{id}$. Therefore,
    $$\Vert A_{\varphi}^{1/2}T a \xi_{\varphi_n}\Vert^2\leq\langle(\mathcal{P}_{\varphi}\circ\mathcal{P}_{\varphi_n})(aa^*)\hat{1},\hat{1}\rangle \leq \frac{1}{a_n}\langle\mathcal{P}_{\varphi_n}(aa^*)\hat{1},\hat{1}\rangle\leq 2 \Vert \Delta_{\varphi_n}^{1/2} a \xi_{\varphi_n}\Vert^2.$$
    Hence $\Delta_{\varphi_n}\geq \frac{1}{2}T^*A_\varphi T $ on $D(\Delta_{\varphi_n})$.

    Now we have $\Delta_{\varphi_n}\geq \frac{1}{2}T^*A_\varphi T $ for $n\in \mathbb{N}\cup\{\omega\}$, which induces $(\Delta_{\varphi_n}+t)^{-1}\leq(\frac{1}{2}T^*A_{\varphi}T+t)^{-1}$ within $B(L^2(\CA_n,\varphi_n))$. 

    Since $A_\varphi$ is a positive trace class operator, we can take $\{b_k\}$ to be an orthogonal normalized basis of $L^2(M)$ formed by eigenvectors of $A_\varphi$. Assume that $A_\varphi b_k=\lambda_k b_k \ (\lambda_k\geq 0)$. Since $T|_{L^2(M)}=T^*|_{L^2(M)}=\id$, we have $T^*A_{\varphi}Tb_k=\lambda_k b_k$. So $\{b_k\}$ are also the eigenvectors of $T^*A_{\varphi}T$. Hence
    $$(\frac{1}{2}T^*A_{\varphi}T+t)^{-1}b_k=(\frac{1}{2}\lambda_k+1)^{-1}b_k= (\frac{1}{2}A_{\varphi}+t)^{-1}b_k.$$
    Therefore, $e_n(\frac{1}{2}T^*A_{\varphi}T+t)^{-1}e_n=(\frac{1}{2}A_{\varphi}+t)^{-1}$ and 
    $$e_n(\Delta_{\varphi_n}+t)^{-1}e_n\leq e_n(\frac{1}{2}T^*A_{\varphi}T+t)^{-1}e_n = (\frac{1}{2}A_{\varphi}+t)^{-1}.$$

\end{proof}

Now we are ready to prove the analogue of the fact that for the $(\Gamma,\mu)$-Poisson boundary $(B,\nu_B)$, $h_\mu(B,\nu_B)=h(\mu)$.

\begin{theorem}
    With the same notation as above, assume that $H(\varphi)<+\infty$. Then we have 
    $$h_\varphi(\CB_\varphi,\zeta)=h_\varphi(\CA^\omega, \varphi^\omega)=\lim_\omega\limits h_\varphi(\CA_n, \varphi_n)=h(\varphi).$$
\end{theorem}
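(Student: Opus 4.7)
The plan is to establish the two remaining equalities in $h_\varphi(\CB_\varphi,\zeta)=h_\varphi(\CA^\omega,\varphi^\omega)=\lim_\omega h_\varphi(\CA_n,\varphi_n)=h(\varphi)$, the first being Theorem \ref{ultra lim}. The unifying tool is the integral representation $-\log x = \int_0^\infty[(x+t)^{-1}-(1+t)^{-1}]\,dt$, which via spectral calculus applied to $\Delta_\zeta$ gives
$$h_\varphi(\CA,\zeta)=\int_0^\infty\bigl[\varphi(e(\Delta_\zeta+t)^{-1}e)-(1+t)^{-1}\bigr]\,dt.$$
Lemma \ref{1+t} sandwiches the integrand between $0$ and $\varphi((\tfrac{1}{2}A_\varphi+t)^{-1})-(1+t)^{-1}$, whose $t$-integral evaluates to $H(\varphi)+\log 2$ and is finite by hypothesis; this uniform integrable majorant will license all subsequent limit exchanges.

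For $h_\varphi(\CA^\omega,\varphi^\omega)=\lim_\omega h_\varphi(\CA_n,\varphi_n)$ I would prove pointwise convergence of the $t$-integrand and apply dominated convergence. Using the KMS-type identity $\langle\Delta_\zeta^{is}\hat{z}_k^*,\hat{z}_k^*\rangle=\zeta(z_k\sigma_s^\zeta(z_k^*))$ together with the Ando-Haagerup compatibility $\sigma_s^{\varphi^\omega}((x_n)^\omega)=(\sigma_s^{\varphi_n}(x_n))^\omega$ on the Ocneanu ultraproduct, one obtains
$$\lim_\omega\langle\Delta_{\varphi_n}^{is}\hat{z}_k^*,\hat{z}_k^*\rangle=\langle\Delta_{\varphi^\omega}^{is}\hat{z}_k^*,\hat{z}_k^*\rangle\qquad(s\in\mathbb{R}).$$
L\'evy's continuity theorem then promotes this to weak convergence of the spectral measures of $\Delta_{\varphi_n}$ at $\hat{z}_k^*$ on $[0,\infty)$, so testing against $(\lambda+t)^{-1}\in C_b([0,\infty))$ gives pointwise convergence. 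Summation in $k$ is controlled by $\langle(\Delta_{\varphi_n}+t)^{-1}\hat{z}_k^*,\hat{z}_k^*\rangle\leq\tau(z_kz_k^*)/t$ with $\sum_k\tau(z_kz_k^*)=1$.

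For $\lim_\omega h_\varphi(\CA_n,\varphi_n)=h(\varphi)$ I would compute $h_\varphi(\CA_n,\varphi_n)$ in closed form. Since the modular operator on $\CA_n=B(L^2(M))$ acts on Hilbert-Schmidt operators by $T\mapsto A_{\varphi_n}TA_{\varphi_n}^{-1}$, the derivative formula (\ref{Deltait}) gives
$$h_\varphi(\CA_n,\varphi_n)=\mathrm{Tr}(A_{\varphi_n}\log A_{\varphi_n})-\mathrm{Tr}\!\Bigl(\sum_k L_{z_k^*}A_{\varphi_n}L_{z_k}\cdot\log A_{\varphi_n}\Bigr).$$
The algebraic crux is that the rank-one presentation $A_{\varphi^{*j}}=\sum|\widehat{(z_{i_1}\cdots z_{i_j})^*}\rangle\langle\widehat{(z_{i_1}\cdots z_{i_j})^*}|$ forces both the ``left'' map $A\mapsto\sum L_{z_k^*}AL_{z_k}$ and the convolution map $A\mapsto\sum R_{z_k^*}AR_{z_k}$ to equal $A_{\varphi^{*(j+1)}}$ when applied to $A_{\varphi^{*j}}$; hence they agree on $A_{\varphi_n}$, yielding $\sum_kL_{z_k^*}A_{\varphi_n}L_{z_k}=A_{\varphi*\varphi_n}$ and the clean decomposition
$$h_\varphi(\CA_n,\varphi_n)=H(\varphi*\varphi_n)-H(\varphi_n)+S(\varphi*\varphi_n\,\|\,\varphi_n).$$

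The three terms are then analyzed via the convex identity $\varphi_n=(1-a_n)\varphi^{*0}+a_n(\varphi*\varphi_n)$. Concavity and mixing bounds yield $a_nH(\varphi*\varphi_n)\leq H(\varphi_n)\leq a_nH(\varphi*\varphi_n)+H_{\mathrm{mix}}(a_n)$, so $H(\varphi*\varphi_n)-H(\varphi_n)=(1-a_n)H(\varphi*\varphi_n)+o(1)$. Applying the same bounds to $\varphi*\varphi_n=\sum_{j\geq 1}(1-a_n)a_n^{j-1}\varphi^{*j}$ together with the telescoping identity $(1-a)\sum_{j\geq 1}a^jH(\varphi^{*j})=\sum_{j\geq 1}a^j\delta_j$ for $\delta_j=H(\varphi^{*j})-H(\varphi^{*(j-1)})$, the Abel-Ces\`aro theorem (whose hypothesis $\tfrac{1}{N}\sum_{j\leq N}\delta_j=H(\varphi^{*N})/N\to h(\varphi)$ is the very definition of $h(\varphi)$) forces $(1-a_n)H(\varphi*\varphi_n)\to h(\varphi)$. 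Finally $A_{\varphi_n}\geq a_nA_{\varphi*\varphi_n}$ combined with operator monotonicity of $\log$ gives $S(\varphi*\varphi_n\,\|\,\varphi_n)\leq-\log a_n\to 0$. The main obstacle will be making the $\log$-derivative step in the closed-form computation rigorous when $\log A_{\varphi_n}$ is unbounded, which I would handle by $\epsilon$-regularizing $A_{\varphi_n}$ and passing to the limit using the integrable majorant from the first paragraph.
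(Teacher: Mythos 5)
Your overall skeleton (integral representation, the sandwich from Lemma \ref{1+t}, passage to the ultralimit) matches the paper, but the two key steps are filled in differently, and one of them has a genuine gap.

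\textbf{The gap: dominated convergence along an ultrafilter.} You write that the uniform integrable majorant ``will license all subsequent limit exchanges'' and plan to ``apply dominated convergence.'' This is precisely the point the paper flags: Lebesgue's dominated convergence theorem governs \emph{sequential} limits, and $\lim_\omega$ is a limit along a net. Pointwise ultrafilter convergence plus a uniform $L^1$ majorant does not by itself give $\lim_\omega\int F_n=\int F_\omega$; the Egoroff/subsequence machinery underlying DCT is unavailable. The paper bridges this by proving that the family $F_n(t)=\varphi(e_n[(\Delta_{\varphi_n}+t)^{-1}-(1+t)^{-1}]e_n)$ is uniformly equicontinuous on compact subsets of $(0,\infty)$ (via $\|(\Delta+t_1)^{-1}-(\Delta+t_2)^{-1}\|\le|t_1-t_2|/(t_1t_2)$), so pointwise ultrafilter convergence upgrades to locally uniform convergence, and then the majorant $G\in L^1$ finishes the job. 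You need this (or some substitute) for the argument to close. The same concern re-appears inside your appeal to L\'evy's continuity theorem, whose tightness step is itself another limit-exchange of the same flavour.

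\textbf{Where you diverge and it works.} For the pointwise convergence of the integrand, the paper reads off $(\Delta_{\varphi^\omega}+t)^{-1}=\lim_\omega(\Delta_{\varphi_n}+t)^{-1}$ directly from Ando--Haagerup's resolvent compatibility on the Ocneanu ultraproduct, then tests against $\hat z_k^*$; your route via the modular flow $\Delta^{is}$, the KMS identity, and L\'evy's theorem on the spectral measures at $\hat z_k^*$ is a legitimate (if more circuitous) alternative, resting on the same Ando--Haagerup input. More substantially, your treatment of $\lim_\omega h_\varphi(\CA_n,\varphi_n)=h(\varphi)$ is genuinely different from the paper's. The paper only establishes the inequality $\ge h(\varphi)$, using $H(\varphi^{*k})\ge kh(\varphi)$ together with concavity of $H$, and closes the chain by invoking the already-known bound $h(\varphi)\ge h_\varphi(\CB_\varphi,\zeta)$ from \cite[Cor.~5.13]{DP20}. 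You instead compute the limit exactly: the identity $\sum_k L_{z_k^*}A_{\varphi^{*j}}L_{z_k}=A_{\varphi^{*(j+1)}}$ (which I checked) gives $h_\varphi(\CA_n,\varphi_n)=H(\varphi*\varphi_n)-H(\varphi_n)+S(\varphi*\varphi_n\|\varphi_n)$, the mixture $\varphi_n=(1-a_n)\varphi^{*0}+a_n(\varphi*\varphi_n)$ controls $H(\varphi*\varphi_n)-H(\varphi_n)$, operator monotonicity of $\log$ bounds $0\le S(\varphi*\varphi_n\|\varphi_n)\le-\log a_n\to 0$, and the Abel--Ces\`aro theorem applied to $\delta_j=H(\varphi^{*j})-H(\varphi^{*(j-1)})$ gives $(1-a_n)H(\varphi*\varphi_n)\to h(\varphi)$. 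This is equivalent to (and a touch more self-contained than) the paper's one-sided argument, since it does not need the upper bound from \cite{DP20} as a separate input; the price is the extra care you rightly flag in handling $\log A_{\varphi_n}$ on its unbounded negative part, which the paper handles by the same $\epsilon$-regularization you propose.
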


\begin{proof}

    According to \cite[Corollary 5.13]{DP20} and Theorem \ref{ultra lim}, we already have $$h(\varphi)\geq h_\varphi(\CB_\varphi,\zeta)=h_\varphi(\CA^\omega, \varphi^\omega).$$ Only need to prove $$h_\varphi(\CA^\omega, \varphi^\omega)=\lim_\omega\limits h_\varphi(\CA_n, \varphi_n)\geq h(\varphi).$$

    Inspired by the proof of \cite[Lemma 5.14]{DP20}, we use the integral representation of $\log x$ given by 
    $$\log x=\int_{0}^{+\infty}[(1+t)^{-1}-(x+t)^{-1}]\d t.$$
    For $n\in \mathbb{N}\cup\{\omega\}$, let $e_n$ be the orthogonal projection from $L^2(\CA_n,\varphi_n)$ onto $L^2(M)$, then we have
    \begin{equation}\label{int varphi}
     h_\varphi(\CA_n, \varphi_n)=-\varphi(e_n\log \Delta_{\varphi_n}e_n)=\int_{0}^{+\infty}\varphi(e_n[(\Delta_{\varphi_n}+t)^{-1}-(1+t)^{-1}]e_n)\d t.   
    \end{equation}
    The final equality holds because according to Lemma \ref{1+t}, everything is positive and we can apply Fubini's theorem.

    For $n\in \mathbb{N}\cup\{\omega\}$, define $F_n:(0,+\infty)\to \mathbb{R}$ by
    $$F_n(t)=\varphi(e_n[(\Delta_{\varphi_n}+t)^{-1}-(1+t)^{-1}]e_n).$$
    Then $h_\varphi(\CA_n,\varphi_n)=\int_{0}^{+\infty} F_n(t) \, \d t.$
    
    Define $G:(0,+\infty)\to \mathbb{R}$ by
    $$G(t)=\varphi(( \frac{1}{2}A_{\varphi}+t)^{-1}-(1+t)^{-1}).$$
    Then by Lemma \ref{1+t}, we have $0\leq F_n(t)\leq G(t)$. 

    Since
    $$
    \begin{aligned}
        \int_{0}^{+\infty}G(t)\,\d t&=\varphi\left(\int_{0}^{+\infty}[( \frac{1}{2}A_{\varphi}+t)^{-1}-(1+t)^{-1}]\,\d t\right)=-\varphi(\log  (\frac{1}{2}A_\varphi))\\
        &=-\varphi(\log A_\varphi)+\log 2=-\mathrm{Tr}(A_\varphi \log A_\varphi)+\log 2=H(\varphi)+\log 2<+\infty,
    \end{aligned}$$
    we have $G\in L^1((0,+\infty))$.

    Let's prove that for $t>0$, $\lim_\omega\limits F_n(t)=F_\omega(t)$. According to the proof of \cite[Theorem 4.1]{AH14}, we have that for $(x_n)^\omega\in \CA^\omega$, 
    $$
     (\Delta_{\varphi^\omega}+1)^{-1}((x_n)^\omega \xi_{\varphi^\omega})=((\Delta_{\varphi_n}+1)^{-1}(x_n\xi_{\varphi_n}))_\omega.   
    $$
    For any $t>0$, consider the functional calculation about $f\in C([0,1])$ satisfying $f(s)=\frac{s}{1-(1-t)s}$, we have
    \begin{equation*}
     f((\Delta_{\varphi^\omega}+1)^{-1})((x_n)^\omega \xi_{\varphi^\omega})=(f((\Delta_{\varphi_n}+1)^{-1})(x_n\xi_{\varphi_n}))_\omega.   
    \end{equation*}
    That is
    \begin{equation}\label{D+t}
     (\Delta_{\varphi^\omega}+t)^{-1}((x_n)^\omega \xi_{\varphi^\omega})=((\Delta_{\varphi_n}+t)^{-1}(x_n\xi_{\varphi_n}))_\omega.   
    \end{equation}
    For any $x,y \in M$ and $t>0$, by (\ref{D+t}) we have
    $$\langle (\Delta_{\varphi^\omega}+t)^{-1}\Bar{\Delta}(x)\xi_{\varphi^\omega},\Bar{\Delta}(y)\xi_{\varphi^\omega}\rangle=\lim_\omega \langle (\Delta_{\varphi_n}+t)^{-1}x\hat{1},y\hat{1}\rangle.$$
    That is 
    $$e(\Delta_{\varphi^\omega}+t)^{-1}e=\lim_\omega\limits e_n(\Delta_{\varphi_n}+t)^{-1}e_n$$
    with respect to ultraweak topology in $B(L^2(M))$, where $e$ is the orthogonal projection from $L^2(\CA^\omega,\varphi^\omega)$ onto $L^2(M)$. 

    Since $\varphi$ is normal, we have
    $$ F_\omega(t)=\varphi(e(\Delta_{\varphi^\omega}+t)^{-1}e)=\lim_\omega\limits \varphi(e_n(\Delta_{\varphi_n}+t)^{-1}e_n)=\lim_\omega F_n(t).$$

    Now we have $F_n$ converge to $F_\omega$ pointwise when $n\to \omega$ and they are uniformly dominated by $G\in L^1((0,+\infty))$. But Lebesgue's dominated convergence theorem doesn't work for nets, we need further discussion to prove $\int_{0}^{+\infty} F_\omega(t) \, \d t=\lim_\omega\limits \int_{0}^{+\infty} F_n(t) \, \d t$.

    Let's prove that $\{F_n\}$ is uniformly equicontinuous on any compact subset of $(0,+\infty)$. For any $[\epsilon,1/\epsilon]\subset(0,+\infty)$ and $t_1,t_2\in [\epsilon,1/\epsilon]$, we have

    $$
    \begin{aligned}
        |F_n(t_1)-F_n(t_2)|=&|\varphi(e_n[(\Delta_{\varphi_n}+t_1)^{-1}-(\Delta_{\varphi_n}+t_2)^{-1}]e_n)|\\
        \leq& \Vert(\Delta_{\varphi_n}+t_1)^{-1}-(\Delta_{\varphi_n}+t_2)^{-1}\Vert \\
        \leq & \Vert (x+t_1)^{-1}-(x+t_2)^{-1}\Vert_{L^\infty((0,+\infty))}\\
        =&\frac{|t_1-t_2|}{t_1t_2}\\
        \leq & \epsilon^{-2}|t_1-t_1|.
    \end{aligned}
    $$
    Hence $\{F_n\}$ is uniformly equicontinuous on any compact subset of $(0,+\infty)$. Together with the pointwise convergence, we know that when $n\to \omega$, $F_n$ uniformly converge to $F_\omega$ on any compact subset of $(0,+\infty)$ and dominated by $G\in L^1((0,+\infty))$ at the same time. Therefore,
    $$\int_{0}^{+\infty} F_\omega(t) \, \d t=\lim_\omega\limits \int_{0}^{+\infty} F_n(t) \, \d t,$$
    which is equivalent to $h_\varphi(\CA^\omega, \varphi^\omega)=\lim_\omega\limits h_\varphi(\CA_n, \varphi_n) $.

    Finally, let's prove $\lim_\omega\limits h_\varphi(\CA_n, \varphi_n)\geq h(\varphi)$. According to \cite[Example 5.5]{DP20} (where there is an obvious typo), we have
    $$
    \begin{aligned}
    h_\varphi(\CA_n, \varphi_n)=&h_\varphi(B(L^2(M)), \varphi_n)\\
    =&-\mathrm{Tr}(A_{\varphi*\varphi_n}\log A_{\varphi_n})+\langle \CP_{\varphi_n}(\log A_{\varphi_n}) \hat{1},\hat{1}\rangle \\
    =& -\mathrm{Tr}(A_{\varphi*\varphi_n}\log A_{\varphi_n})+\mathrm{Tr}(A_{\varphi_n}\log A_{\varphi_n})
    \end{aligned}
    $$
    Since $\varphi*\varphi_n=\frac{1}{a_n}\varphi_n-\frac{1-a_n}{a_n}\langle \, \cdot \,  \hat{1},\hat{1}\rangle$, we have $A_{\varphi*\varphi_n}=\frac{1}{a_n}A_{\varphi_n}-\frac{1-a_n}{a_n}P_{\hat{1}}$, where $P_{\hat{1}}$ is the orthogonal projection onto $\mathbb{C}\cdot\hat{1}$.

    Hence
    $$h_\varphi(\CA_n, \varphi_n)=-\frac{1-a_n}{a_n}\mathrm{Tr}(A_{\varphi_n}\log A_{\varphi_n})+\frac{1-a_n}{a_n}\mathrm{Tr}(P_{\hat{1}}\log A_{\varphi_n})=\frac{1-a_n}{a_n}H(\varphi_n)+\frac{1-a_n}{a_n}\langle \log A_{\varphi_n} \hat{1},\hat{1}\rangle.$$
    Since $\varphi_n=(1-a_n)\sum_{k=0}^{\infty}\limits a^k_n\varphi^{*k}$, we have 
    $$A_{\varphi_n}=(1-a_n)\sum_{k=0}^{\infty}\limits a^k_nA_{\varphi^{*k}}\geq (1-a_n)P_{\hat{1}}.$$
    By L\"owner-Heinz theorem (\cite{Lo34} and \cite{He51}, see also \cite[Chapter V]{Bh97}), $x\mapsto \log x$ is operator monotone. Therefore, for any $\epsilon>0$, we have 
    $$\frac{1-a_n}{a_n}\langle \log (A_{\varphi_n}+\epsilon) \hat{1},\hat{1}\rangle\geq \frac{1-a_n}{a_n}\langle \log [(1-a_n)P_{\hat{1}}+\epsilon] \hat{1},\hat{1}\rangle=\frac{1-a_n}{a_n}\log(1-a_n+\epsilon)>\frac{1-a_n}{a_n}\log(1-a_n).$$
    Let $\{E_n(\lambda)\}_{\lambda\geq 0}$ be the spectral measure of $A_{\varphi_n}$. By monotone convergence theorem, we have
    $$\langle \log (A_{\varphi_n}+\epsilon) \hat{1},\hat{1}\rangle=\int_0^{+\infty}\log (\lambda+\epsilon)\d\langle E_n(\lambda)\hat{1},\hat{1}\rangle\to\int_0^{+\infty}\log \lambda\,\d\langle E_n(\lambda)\hat{1},\hat{1}\rangle=\langle \log A_{\varphi_n} \hat{1},\hat{1}\rangle \ (\epsilon\to 0).$$
    Therefore, 
    $$\frac{1-a_n}{a_n}\langle \log A_{\varphi_n} \hat{1},\hat{1}\rangle=\lim_{\epsilon\to 0+}\frac{1-a_n}{a_n}\langle \log (A_{\varphi_n}+\epsilon) \hat{1},\hat{1}\rangle\geq \frac{1-a_n}{a_n}\log(1-a_n)\to 0 \ (n\to\omega).$$

    By L\"owner-Heinz theorem regarding operator convexity (\cite{Kr36} and \cite{BS55}) or more directly, \cite[Exercise V.2.13]{Bh97}, $x\mapsto -x\log x$ is operator concave. By the definition $H(\psi)=-\mathrm{Tr}(A_\psi\log A_\psi)$, we know that $H(\,\cdot\,)$ is concave. Also, according to \cite[Theorem 5.1]{DP20}, $H(\varphi^{*k})\geq kh(\varphi)$. Therefore,
    $$\frac{1-a_n}{a_n}H(\varphi_n)\geq \frac{(1-a_n)^2}{a_n}\sum_{k=0}^{\infty}a_n^kH(\varphi^{*k})\geq \frac{(1-a_n)^2}{a_n}\sum_{k=0}^{\infty}a_n^k\cdot k\cdot h(\varphi)=\frac{1}{a_n}h(\varphi)\geq h(\varphi).$$
    Hence
    $$\lim_\omega h_\varphi(\CA_n, \varphi_n)=\lim_\omega \frac{1-a_n}{a_n}H(\varphi_n)+\frac{1-a_n}{a_n}\langle \log A_{\varphi_n} \hat{1},\hat{1}\rangle\geq h(\varphi)+0=h(\varphi),$$
    which finishes the proof.
\end{proof}

Recall that in the classical theory, for a quasi-invariant $\Gamma$-space $(X,\nu_X)$, the Radon-Nikodym factor of $(X,\nu_X)$ is a quasi-invariant $\Gamma$-space $(X_\mathrm{RN},\Sigma_\mathrm{RN},\nu_{X_\mathrm{RN}})$, where $\Sigma_\mathrm{RN}$ is generated by $\{\frac{\d \gamma\nu_X}{\d\nu_X}\}_{\gamma\in\Gamma}$, $X_\mathrm{RN}$ is the topological model of $\Sigma_\mathrm{RN}$ and $\nu_{X_\mathrm{RN}}=\nu_X|_{\Sigma_\mathrm{RN}}$. The following defines a generalization of Radon-Nikodym factor.

\begin{definition}\label{RN factor}
    Let $(\mathcal{A},\nu)$ be a von Neumann algebra with normal faithful state such that $(M,\tau)\subset(\mathcal{A},\nu)$. We define the \textbf{noncommutative Radon-Nikodym factor} of $(\mathcal{A},\nu)$ to be $(\mathcal{A},\nu)_{\mathrm{RN}}:=(\mathcal{A}_{\mathrm{RN}},\nu|_{\mathcal{A}_{\mathrm{RN}}})$, where $\mathcal{A}_{\mathrm{RN}}$ is the von Neumann subalgebra of $\mathcal{A}$ generated by $\{\sigma^\nu_t(x)|x\in M,t\in\mathbb{R}\}$ and $\{\sigma^\nu_t(x)\}_{t\in \mathbb{R}}$ is the modular automorphism group of $(\mathcal{A},\nu)$.
\end{definition}

The following example shows that the noncommutative Radon-Nikodym factor is a generalization of the classical one.

\begin{example}
    Let $\Gamma$ be a countable discrete group and $(X,\nu_X)$ be a quasi-invariant $\Gamma$-space. Let $(L(\Gamma),\tau)$ be the group von Neumann algebra of $\Gamma$ with canonical trace. Define the $\tau$-hyperstate $\nu$ on $L(\Gamma \curvearrowright X)$ by $\nu(\sum_{\gamma\in\Gamma}a_\gamma u_\gamma)=\int a_e \d \nu_X$. Then following \cite[Theorem X.1.17]{TakII}, the modular operator $\Delta_\nu$ is 
    $$\Delta_\nu(\gamma,x)=\frac{\d \gamma^{-1}\nu_X}{\d\nu_X}(x).$$
    And the modular automorphism group satisfies 
    $$\sigma_t^\nu(u_\gamma)=\left(\frac{\d \gamma^{-1}\nu_X}{\d\nu_X}\right)^{it}u_\gamma.$$
    Hence the $(L(\Gamma),\tau)$-Radon-Nikodym factor of $(L(\Gamma \curvearrowright X),\nu)$ is generated by $\left\{\left(\frac{\d \gamma^{-1}\nu_X}{\d\nu_X}\right)^{it}u_\gamma\right\}_{\gamma\in\Gamma,t\in\mathbb{R}}$, which is exactly the group measure space construction $L(\Gamma \curvearrowright X_\mathrm{RN})$.
\end{example}

\begin{theorem}\label{RN=RN}
   Let $\varphi\in \mathcal{S}_\tau(B(L^2(M,\tau))$ be a normal regular strongly generating hyperstate. Let $(\mathcal{A},\nu)$ be as in Definition \ref{RN factor} such that $h_\varphi(\mathcal{A},\nu)<+\infty$. Let $(\mathcal{A}_0,\nu_0)$ be a von Neumann subalgebra of $(\mathcal{A},\nu)$ such that $(M,\tau)\subset(\mathcal{A}_0,\nu_0)$. Then $h_\varphi(\mathcal{A}_0,\nu_0)=h_\varphi(\mathcal{A},\nu)$ if and only if $(\mathcal{A}_0,\nu_0)_{\mathrm{RN}}=(\mathcal{A},\nu)_{\mathrm{RN}}$.
\end{theorem}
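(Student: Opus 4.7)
The plan is to imitate the closing step of the proof of Theorem~\ref{ultra lim}: first reduce both entropies in the statement to entropies of the corresponding noncommutative Radon-Nikodym factors, which makes the ``if'' direction immediate, then pass to the resolvent form of $h_\varphi$ to read off the ``only if'' direction from a strict monotonicity statement.

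For the reduction, consider any inclusion $(M,\tau)\subset(\mathcal{N},\eta)$ with $\eta$ a normal faithful hyperstate. By Definition~\ref{RN factor} the subalgebra $\mathcal{N}_{\mathrm{RN}}$ is $\sigma^\eta$-invariant, so \cite[Theorem IX.4.2]{TakII} produces a normal $\eta$-preserving conditional expectation $\mathcal{N}\to\mathcal{N}_{\mathrm{RN}}$ together with the identity $\Delta_{\eta|_{\mathcal{N}_{\mathrm{RN}}}}^{it}=\Delta_\eta^{it}|_{L^2(\mathcal{N}_{\mathrm{RN}})}$. Since each $\hat z_k^*\in L^2(M)\subset L^2(\mathcal{N}_{\mathrm{RN}})$, formula~\eqref{Deltait} gives $h_\varphi(\mathcal{N},\eta)=h_\varphi(\mathcal{N}_{\mathrm{RN}},\eta|_{\mathcal{N}_{\mathrm{RN}}})$ by exactly the same computation as in the last three displays of the proof of Theorem~\ref{ultra lim}. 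Applied separately to $(\mathcal{A},\nu)$ and $(\mathcal{A}_0,\nu_0)$ this yields $h_\varphi(\mathcal{A},\nu)=h_\varphi((\mathcal{A},\nu)_{\mathrm{RN}})$ and $h_\varphi(\mathcal{A}_0,\nu_0)=h_\varphi((\mathcal{A}_0,\nu_0)_{\mathrm{RN}})$, settling the ``if'' direction.

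For the ``only if'' direction, I plan to use the resolvent representation
\[
h_\varphi(\mathcal{N},\eta)=\int_0^{+\infty}\bigl[\varphi\bigl(e_\mathcal{N}(\Delta_\eta+s)^{-1}e_\mathcal{N}\bigr)-(1+s)^{-1}\bigr]\,\d s,
\]
valid by Lemma~\ref{1+t} and Fubini, together with the pointwise comparison $e_\mathcal{A}(\Delta_\nu+s)^{-1}e_\mathcal{A}\ge e_{\mathcal{A}_0}(\Delta_{\nu_0}+s)^{-1}e_{\mathcal{A}_0}$ on $L^2(M)$ for every $s>0$. Integrating over $s$ recovers the monotonicity $h_\varphi(\mathcal{A},\nu)\ge h_\varphi(\mathcal{A}_0,\nu_0)$, with equality iff the compressed resolvents coincide at every $s>0$; inverting the Laplace transform and using uniqueness in Stone's theorem then forces the compressed modular flows $e_M\Delta_\nu^{it}e_M$ and $e_M\Delta_{\nu_0}^{it}e_M$ to agree on $L^2(M)$ for all $t$, whereupon the definition of the RN factor identifies $(\mathcal{A}_0)_{\mathrm{RN}}$ with $\mathcal{A}_{\mathrm{RN}}$.

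The main obstacle lies in establishing the pointwise resolvent comparison and in recovering equality of algebras (not merely of compressed flows) from equality of the compressed flows. Since $(\mathcal{A}_0,\nu_0)$ is not assumed to admit a $\nu$-preserving conditional expectation from $(\mathcal{A},\nu)$, one has $\sigma^{\nu_0}\ne\sigma^\nu|_{\mathcal{A}_0}$ in general, so Takesaki does not directly relate $\Delta_{\nu_0}$ with $\Delta_\nu|_{L^2(\mathcal{A}_0)}$; the comparison will have to proceed through Connes' cocycle $[D\nu:D\nu_0]_t$ or the relative modular operator $\Delta_{\nu/\nu_0}$, acting on $M\hat 1\subset L^2(\mathcal{A})$. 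The final identification of algebras from compressed modular flows will likely use that $M$ generates $\mathcal{A}_{\mathrm{RN}}$ via the modular flow, together with the uniqueness part of Tomita-Takesaki applied to the standard forms of the two RN factors.
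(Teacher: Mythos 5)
Your overall architecture is right and matches the paper's: the ``if'' direction is handled exactly as you describe (the RN subalgebra is $\sigma^\nu$-invariant, so Takesaki gives a normal expectation and $\Delta_{\mathrm{RN}}^{it}=\Delta_\nu^{it}|_{L^2(\mathcal{A}_{\mathrm{RN}})}$, hence \eqref{Deltait} yields equal entropies), and the ``only if'' direction does indeed run through the integral formula $h_\varphi(\mathcal{A},\nu)=\int_0^\infty\varphi(e[(\Delta_\nu+t)^{-1}-(1+t)^{-1}]e)\,\d t$ and a resolvent comparison. But the two gaps you flag at the end are real, and your proposed fixes would not close them.

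First, the resolvent comparison does not require Connes cocycles or relative modular operators. The paper uses the elementary compression identity $\Delta_0=e_0\Delta e_0$ (with $e_0$ the projection of $L^2(\mathcal{A},\nu)$ onto $L^2(\mathcal{A}_0,\nu_0)$), valid because $\xi_\nu$ is a common cyclic and separating vector and does not need any $\sigma^\nu$-invariance of $\mathcal{A}_0$; from this, the operator inequality $(\Delta_0+t)^{-1}=(e_0\Delta e_0+t)^{-1}\le e_0(\Delta+t)^{-1}e_0$ on $L^2(\mathcal{A}_0)$ is just the $(1,1)$ block of the $2\times2$ inverse formula (a Schur complement bound). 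You were led astray by thinking you needed $\sigma^{\nu_0}=\sigma^\nu|_{\mathcal{A}_0}$; the compression identity is weaker and holds in general.

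Second, and more seriously, your plan to pass from equality of compressed resolvents to ``compressed modular flows $e_M\Delta_\nu^{it}e_M=e_M\Delta_{\nu_0}^{it}e_M$ agree'' and then ``whereupon the definition of the RN factor identifies the two RN factors'' is precisely the step that does not work: knowing $e_M\Delta^{it}e_M=e_M\Delta_0^{it}e_M$ only pins down $E_M(\sigma_t(z))$, not $\sigma_t(z)$ itself, and the RN factor is generated by $\sigma_t^\nu(x)$, not by compressions thereof. The paper's actual mechanism is sharper and proceeds at the vector level: equality of the entropies forces, for each $t>0$ and each $k$, the \emph{vector} identity $e_0(\Delta+t)^{-1}e_0\hat z_k^*=(\Delta_0+t)^{-1}\hat z_k^*$ (positivity of the difference plus a vanishing quadratic form). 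One then differentiates in $t$ to get $e_0(\Delta+t)^{-2}e_0\hat z_k^*=(\Delta_0+t)^{-2}\hat z_k^*$ and uses this to compute $\Vert(\Delta+t)^{-1}\hat z_k^*\Vert=\Vert e_0(\Delta+t)^{-1}\hat z_k^*\Vert$, which forces $(\Delta+t)^{-1}\hat z_k^*$ to lie in $L^2(\mathcal{A}_0)$; hence $(\Delta+t)^{-1}\hat z_k^*=(\Delta_0+t)^{-1}\hat z_k^*$ as vectors of $L^2(\mathcal{A})$. Iterating differentiation gives $p((\Delta+1)^{-1})\hat z_k^*=p((\Delta_0+1)^{-1})\hat z_k^*$ for all polynomials $p$, and bounded Borel functional calculus with $f(x)=(\tfrac1x-1)^{it}$ yields $\Delta^{it}\hat z_k^*=\Delta_0^{it}\hat z_k^*$, i.e.\ $\sigma_t(z_k^*)=\sigma_t^0(z_k^*)$ as elements, from which $(\mathcal{A}_0,\nu_0)_{\mathrm{RN}}=(\mathcal{A},\nu)_{\mathrm{RN}}$ follows by definition of the RN factor. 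Your ``Laplace inversion plus Stone uniqueness'' suggestion cannot produce this upgrade from compressed to genuine equality; the differentiate-then-take-norms trick is the missing idea.
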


\begin{proof}
   Denote by $\Delta_0$ and $\Delta$ the modular operators of $(\mathcal{A}_0,\nu_0)$ and $(\mathcal{A},\nu)$; denote by $\{\sigma_t^0\}_{t\in \mathbb{R}}$ and $\{\sigma_t\}_{t\in \mathbb{R}}$ the modular automorphism groups of $(\mathcal{A}_0,\nu_0) $ and $(\mathcal{A},\nu)$. Then $\Delta_0=e_0\Delta e_0$, where $e_0$ is the orthogonal projection from $L^2(\mathcal{A},\nu)$ onto $L^2(\mathcal{A}_0,\nu_0)$. 

   Assume that $(\mathcal{A}_0,\nu_0)_{\mathrm{RN}}=(\mathcal{A},\nu)_{\mathrm{RN}}$. Denote by $\{\sigma_t^{\mathrm{RN}}\}_{t\in \mathbb{R}}$ the modular automorphism group of $(\mathcal{A},\nu)_\mathrm{RN}$. Since $\mathcal{A}_\mathrm{RN}$ is invariant under the action of $\{\sigma_t\}_{t\in \mathbb{R}}$, by Tomita-Takesaki theory \cite[Theorem IX.4.2 and Theorem IX.4.18]{TakII}, we have $\sigma^{\mathrm{RN}}_t=\sigma_t|_{\mathcal{A}_\mathrm{RN}}$ for any $t\in\mathbb{R}$. By (\ref{Deltait}), we have $h_\varphi(\mathcal{A}_\mathrm{RN},\nu|_{\mathcal{A}_\mathrm{RN}})=h_\varphi(\mathcal{A},\nu)$. Since $(\mathcal{A}_0)_\mathrm{RN}=\mathcal{A}_\mathrm{RN}$, for the same reason, $h_\varphi(\mathcal{A}_\mathrm{RN},\nu|_{\mathcal{A}_\mathrm{RN}})=h_\varphi(\mathcal{A}_0,\nu_0)$. Therefore, $h_\varphi(\mathcal{A}_0,\nu_0)=h_\varphi(\mathcal{A},\nu)$. 

    Assume that $h_\varphi(\mathcal{A}_0,\nu_0)=h_\varphi(\mathcal{A},\nu)$. First, let's prove that $\sigma_t|_M=\sigma_t^0|_M$. The following proof is inspired by the proof of \cite[Lemma 5.14]{DP20}.

    For any $t>0$, within $B(L^2(\mathcal{A}_0,\nu_0))$, we have 
    $$(\Delta_0+t)^{-1}=(e_0\Delta e_0+t)^{-1}\leq e_0(\Delta+t)^{-1}e_0,$$
    where the inequality can be deduced by the following formula of inverse of $2\times2$ block matrix:
$$\begin{pmatrix}
A&B\\
C&D
\end{pmatrix}^{-1}
=
\begin{pmatrix}
A^{-1}+A^{-1}BECA^{-1}&-A^{-1}BE\\
-ECA^{-1}&E
\end{pmatrix},
$$
    where $E=(D-CA^{-1}B)^{-1}$.

    Let $\{z_k\}\subset M$ be the sequence in the standard form of $\varphi$. Let $e$ be the orthogonal projection from $L^2(\mathcal{A},\nu)$ onto $L^2(M)$. By (\ref{int varphi}), we have
    $$
    \begin{aligned}
        0=&h_\varphi(\mathcal{A},\nu)-h_\varphi(\mathcal{A}_0,\nu_0)\\
        =&\int_{0}^{+\infty}\varphi(e[(\Delta+t)^{-1}-(\Delta_0+t)^{-1}]e)\d t \\
        =& \int_{0}^{+\infty}\sum_{k=1}^{\infty} \langle e[(\Delta+t)^{-1}-(\Delta_0+t)^{-1}]e \hat{z}^*_k, \hat{z}^*_k \rangle\d t\\
        =& \int_{0}^{+\infty}\sum_{k=1}^{\infty} \langle [e_0(\Delta+t)^{-1}e_0-(\Delta_0+t)^{-1}] \hat{z}^*_k, \hat{z}^*_k \rangle\d t\\
    \end{aligned}
    $$
    Since $ e_0(\Delta+t)^{-1}e_0\geq (\Delta_0 +t)^{-1}$ in $B(L^2(\mathcal{A}_0,\nu_0))$, we must have
    $$e_0(\Delta+t)^{-1}e_0  \hat{z}^*_k= (\Delta_0+t)^{-1} \hat{z}^*_k $$
    for any $t>0, k\geq1$. 
    
    By differentiating on $t$, we have
    $$e_0(\Delta+t)^{-2}e_0  \hat{z}^*_k= (\Delta_0+t)^{-2} \hat{z}^*_k.$$
    Hence
    $$
    \begin{aligned}
        \Vert (\Delta+t)^{-1}\hat{z}^*_k \Vert^2=&\langle e_0(\Delta+t)^{-2}e_0 \hat{z}^*_k, \hat{z}^*_k \rangle\\
        =&\langle (\Delta_0+t)^{-2} \hat{z}^*_k, \hat{z}^*_k \rangle \\
        = & \Vert (\Delta_0+t)^{-1}\hat{z}^*_k \Vert^2\\
        = & \Vert e_0(\Delta+t)^{-1}e_0  \hat{z}^*_k \Vert^2\\
        = & \Vert e_0(\Delta+t)^{-1} \hat{z}^*_k \Vert^2.\\
    \end{aligned}
    $$ 
    Therefore, for any $t>0, k\geq1$, 
    $$(\Delta+t)^{-1}\hat{z}^*_k= e_0(\Delta+t)^{-1} \hat{z}^*_k = e_0(\Delta+t)^{-1}e_0  \hat{z}^*_k=(\Delta_0+t)^{-1}\hat{z}^*_k.$$
    For any $n\in\mathbb{N}$, differentiating on $t$ for $n$ times, we have
    $$(\Delta+t)^{-n}\hat{z}^*_k=(\Delta_0+t)^{-n}\hat{z}^*_k.$$
    Therefore, for any polynomial $f\in\mathbb{C}[x]$, 
    \begin{equation}\label{f(D)}
     f((\Delta+1)^{-1})\hat{z}^*_k=f((\Delta_0+1)^{-1})\hat{z}^*_k.   
    \end{equation}

    Let $B([0,1])$ be the space of bounded Borel functions on $[0,1]$. Since $B([0,1])$ can be generated by $\mathbb{C}[x]$ as a function space closed under the uniformly bounded pointwise convergence along sequences (i.e. a sequence $f_n\to f$ if there exists a $C\geq 0$ such that $|f_n|\leq C$ for all $n$, and $(f_n)$ converges to $f$ pointwise), (\ref{f(D)}) also holds for any $f\in B([0,1])$. Take $f$ to be $f(x)=(\frac{1}{x}-1)^{it}$, we have
    $$\Delta^{it}\hat{z}^*_k=\Delta_0^{it}\hat{z}^*_k.$$
    That is 
    $$ \sigma_t(z_k^*)= \sigma^0_t(z_k^*)$$
    for any $k\geq 1, t\in \mathbb{R}$. 

    Since $\{z_k\}$ generates $M$, we have $\sigma_t|_M= \sigma_t^0|_M$ for any $t\in\mathbb{R}$. Therefore, according to Definition \ref{RN factor}, we have $(\mathcal{A}_0,\nu_0)_{\mathrm{RN}}=(\mathcal{A},\nu)_{\mathrm{RN}}$.

\end{proof}

Recall that in classical theory, a $(\Gamma,\mu)$-boundary is a $(\Gamma,\mu)$-space $(B_0,\nu_0)$ such that there exists a factor map from the $\mu$-Poisson boundary $(B,\nu)$ onto $(B_0,\nu_0)$. The following defines a generalization of $(\Gamma,\mu)$-boundary.

\begin{definition}
    Up to state preserving isomorphisms, a \textbf{$\varphi$-boundary} $(\CB_0,\zeta_0)$ is a von Neumann subalgebra of $(B_\varphi,\zeta)$ satisfying $(M,\tau)\subset(\CB_0,\zeta_0)\subset(\mathcal{B}_\varphi,\zeta)$. 
\end{definition}

The following theorem is the analogue of the fact that the Poisson boundary is the unique boundary with maximal entropy.
\begin{theorem} \label{bd max entropy}
   Let $\varphi\in \mathcal{S}_\tau(B(L^2(M,\tau))$ be a normal regular strongly generating hyperstate such that $H(\varphi)<+\infty$. Let $(\CB_\varphi,\zeta)$ be the Poisson boundary. Then for any $\varphi$-boundary $(\CB_0,\zeta_0) \subset(\mathcal{B}_\varphi,\zeta)$, $h_\varphi(\CB_0,\zeta_0)=h_\varphi(\CB_\varphi,\zeta)$ if and only if $(\CB_0,\zeta_0)=(\CB_\varphi,\zeta)$.
\end{theorem}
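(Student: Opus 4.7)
The ``only if'' implication is immediate, since a state-preserving isomorphism of $(\CB_0,\zeta_0)$ with $(\CB_\varphi,\zeta)$ sends modular operators to modular operators and hence preserves $h_\varphi$. For the ``if'' direction, assume $h_\varphi(\CB_0,\zeta_0)=h_\varphi(\CB_\varphi,\zeta)$. The entropy identity $h_\varphi(\CB_\varphi,\zeta)=h(\varphi)\leq H(\varphi)<+\infty$ just established lets me apply Theorem \ref{RN=RN} to the chain $(M,\tau)\subset(\CB_0,\zeta_0)\subset(\CB_\varphi,\zeta)$, yielding $(\CB_0,\zeta_0)_{\mathrm{RN}}=(\CB_\varphi,\zeta)_{\mathrm{RN}}$. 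Unpacking the proof of Theorem \ref{RN=RN}, this identity really says that $\sigma_t^{\zeta_0}|_M=\sigma_t^\zeta|_M$ for every $t\in\mathbb{R}$; in particular $\sigma_t^\zeta(M)\subset\CB_0$, and therefore the von Neumann subalgebra $(\CB_\varphi,\zeta)_{\mathrm{RN}}$, generated by $\{\sigma_t^\zeta(x):x\in M,\,t\in\mathbb{R}\}$, sits inside $\CB_0\subset\CB_\varphi$.

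The theorem therefore reduces to proving $(\CB_\varphi,\zeta)_{\mathrm{RN}}=(\CB_\varphi,\zeta)$, namely that the modular flow of $\zeta$ on $M$ generates the full Poisson boundary. My plan is to exploit the ultraproduct realization of Theorem \ref{ultra lim}: identify $\CB_\varphi$ with its image in $(\CA^\omega,\varphi^\omega)$ under $\bar{\Delta}\circ\mathcal{P}$, and use Takesaki's theorem (applied to the normal conditional expectation $E=\bar{\Delta}\circ\CP_{\varphi^\omega}\colon\CA^\omega\to\CB_\varphi$) to identify $\sigma_t^\zeta$ with the restriction $\sigma_t^{\varphi^\omega}|_{\CB_\varphi}$. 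On the diagonally embedded copy of $M$, this modular flow is given explicitly by $\sigma_t^{\varphi^\omega}((x)^\omega)=(A_{\varphi_n}^{it}xA_{\varphi_n}^{-it})^\omega$, since $A_{\varphi_n}^{it}$ implements the modular flow on each $(B(L^2(M)),\varphi_n)$. Now I apply Theorem \ref{RN=RN} once more, this time to the inclusion $(\CB_\varphi,\zeta)\subset(\CA^\omega,\varphi^\omega)$ (whose entropies both equal $h(\varphi)<+\infty$), to deduce $(\CB_\varphi)_{\mathrm{RN}}=(\CA^\omega)_{\mathrm{RN}}$ as subalgebras of $\CA^\omega$.

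The remaining step, and the main obstacle, is to verify that the conjugates $(A_{\varphi_n}^{it}xA_{\varphi_n}^{-it})^\omega$ for $x\in M$ and $t\in\mathbb{R}$ generate, together with $M$, all of $\bar{\Delta}(\har(\CP_\varphi))\cong\CB_\varphi$ inside $\CA^\omega$. This is the noncommutative counterpart of the classical fact that the Radon-Nikodym cocycle $\bigl\{(d\gamma^{-1}\nu/d\nu)^{it}u_\gamma\bigr\}$ generates the full group measure space construction $L(\Gamma\curvearrowright\partial\Gamma)$ at the classical Poisson boundary, as illustrated in the example following Definition \ref{RN factor}. The strong generating hypothesis on $\varphi$, i.e.\ that $\{z_k\}\subset M$ generates $M$ as a unital algebra, should be the essential input: it should make the spectra of the operators $A_{\varphi_n}$ (which are weighted sums of $A_{\varphi^{*k}}$) rich enough so that, passing to the $\omega$-limit, the modular conjugates $A_{\varphi_n}^{it}MA_{\varphi_n}^{-it}$ exhaust every $\CP_\varphi$-harmonic operator under $E$. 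Once this density claim is established, the chain $\CB_\varphi=(\CB_\varphi)_{\mathrm{RN}}=(\CB_0)_{\mathrm{RN}}\subset\CB_0\subset\CB_\varphi$ forces $\CB_0=\CB_\varphi$, completing the proof.
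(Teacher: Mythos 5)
Your reduction to the claim $(\CB_\varphi,\zeta)_{\mathrm{RN}}=(\CB_\varphi,\zeta)$ is exactly right: the first application of Theorem \ref{RN=RN} gives $(\CB_0,\zeta_0)_{\mathrm{RN}}=(\CB_\varphi,\zeta)_{\mathrm{RN}}$, and since the Radon--Nikodym factor is always contained in the ambient algebra, $(\CB_\varphi,\zeta)_{\mathrm{RN}}\subset\CB_0$, so it suffices to show the RN factor of $\CB_\varphi$ is all of $\CB_\varphi$. Up to that point you coincide with the paper.

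The genuine gap is in how you try to prove $(\CB_\varphi)_{\mathrm{RN}}=\CB_\varphi$. Your plan to pass to the ultraproduct, compute $\sigma_t^{\varphi^\omega}$ on the diagonal $M$ explicitly as $(A_{\varphi_n}^{it}\,\cdot\,A_{\varphi_n}^{-it})^\omega$, and then argue a density claim is never closed --- you acknowledge this yourself (``Once this density claim is established\ldots''), and the second invocation of Theorem \ref{RN=RN} for the inclusion $\CB_\varphi\subset\CA^\omega$ only tells you $(\CB_\varphi)_{\mathrm{RN}}=(\CA^\omega)_{\mathrm{RN}}$ inside $\CA^\omega$, which does not by itself give you that either one equals $\CB_\varphi$. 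The missing idea, and the one the paper uses, is much more direct: $(\CB_\varphi)_{\mathrm{RN}}$ is by construction invariant under the modular flow $\{\sigma_t^\zeta\}$, so by Takesaki's theorem there is a ($\zeta$-preserving) normal conditional expectation $E\colon\CB_\varphi\to(\CB_\varphi)_{\mathrm{RN}}$. Since $E$ is a normal u.c.p.\ map fixing $M$, the rigidity theorem for noncommutative Poisson boundaries \cite[Theorem 4.1]{DP20} forces $E=\id_{\CB_\varphi}$, hence $(\CB_\varphi)_{\mathrm{RN}}=\CB_\varphi$. No ultraproduct analysis of the modular flow is needed at this stage; the rigidity theorem is the tool you are missing, and without it the density argument you sketch remains a nontrivial open step rather than a proof.
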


\begin{proof}
   Assume that $h_\varphi(\CB_0,\zeta_0)=h_\varphi(\CB_\varphi,\zeta)$. Then by Theorem \ref{RN=RN}, we have $(\CB_0,\zeta_0)_\mathrm{RN}=(\CB_\varphi,\zeta)_\mathrm{RN}$.

   Since $(\CB_\varphi)_\mathrm{RN}$ is invariant under the action of modular automorphism group, according to \cite[Theorem IX.4.2]{TakII}, there exists a normal conditional expectation $E:\CB_\varphi\to (\CB_\varphi)_\mathrm{RN}$. By the rigidity of Poisson boundary \cite[Theorem 4.1]{DP20}, we have $E=\id_{\CB_\varphi}$ and $(\CB_\varphi)_\mathrm{RN}=\CB_\varphi$. Hence
   $$(\CB_\varphi,\zeta)=(\CB_\varphi,\zeta)_\mathrm{RN}=(\CB_0,\zeta_0)_\mathrm{RN}\subset(\CB_0,\zeta_0).$$
   Therefore, $(\CB_0,\zeta_0)=(\CB_\varphi,\zeta)$.
\end{proof}

Take $(\CB_0,\zeta_0)$ to be $(M,\tau)$ in Theorem \ref{bd max entropy}, we have the following corollary, which is an analogue of the fact that the $(\Gamma,\mu)$-Poisson boundary is trivial if and only if $h(\mu)=0$. See also \cite[Corollary 5.16]{DP20} for the ``if'' direction.

\begin{corollary}
  Let $\varphi\in \mathcal{S}_\tau(B(L^2(M,\tau))$ be a normal regular strongly generating hyperstate such that $H(\varphi)<+\infty$. Then $\CB_\varphi=M$ if and only if $h(\varphi)=0$.   
\end{corollary}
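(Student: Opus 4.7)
The plan is to derive this as a direct corollary of Theorem \ref{bd max entropy} combined with the earlier identity $h_\varphi(\CB_\varphi,\zeta)=h(\varphi)$ (proved via the ultraproduct construction), taking $(\CB_0,\zeta_0)=(M,\tau)$. The only small observation needed is that the Furstenberg-type entropy of the trivial boundary vanishes, which follows immediately from the fact that the modular operator of $(M,\tau)$ acting on $L^2(M,\tau)$ is the identity, so $h_\varphi(M,\tau)=-\varphi(e_M\log 1\,e_M)=0$.

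For the ``if'' direction, I would start from $h(\varphi)=0$ and apply the ultraproduct theorem to conclude $h_\varphi(\CB_\varphi,\zeta)=h(\varphi)=0=h_\varphi(M,\tau)$. Since $(M,\tau)\subset(\CB_\varphi,\zeta)$ is a $\varphi$-boundary in the sense just defined, Theorem \ref{bd max entropy} forces $(M,\tau)=(\CB_\varphi,\zeta)$, i.e.\ $\CB_\varphi=M$. For the ``only if'' direction, if $\CB_\varphi=M$ then $\zeta=\tau$ is a trace, its modular operator is trivial, so $h_\varphi(\CB_\varphi,\zeta)=0$; applying the ultraproduct entropy identity once more gives $h(\varphi)=h_\varphi(\CB_\varphi,\zeta)=0$.

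There is essentially no obstacle here: both directions reduce to plugging $(M,\tau)$ into Theorem \ref{bd max entropy} and invoking the equality $h_\varphi(\CB_\varphi,\zeta)=h(\varphi)$. The main point to flag, to make the writeup transparent, is that the finite-entropy hypothesis $H(\varphi)<+\infty$ is inherited from the theorem statements being quoted; no new estimate is required. The proof is therefore a two-line deduction, and I would write it as such, referencing Theorem \ref{bd max entropy} for the equivalence and the preceding entropy-identity theorem for the translation between $h_\varphi(\CB_\varphi,\zeta)$ and $h(\varphi)$.
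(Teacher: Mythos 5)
Your proposal is correct and is essentially the same as the paper's proof, which simply instantiates Theorem \ref{bd max entropy} with $(\CB_0,\zeta_0)=(M,\tau)$ and uses the identity $h_\varphi(\CB_\varphi,\zeta)=h(\varphi)$. The observation that $h_\varphi(M,\tau)=0$ because the modular operator of $(M,\tau)$ is the identity is exactly the small point that makes the substitution work, and you have it right.
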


\section{Trivial boundaries of amenable von Neumann algebras}

According to \cite{KV82}, a countable discrete group $\Gamma$ is amenable if and only if there exists an admissible measure $\mu \in \prob(\Gamma)$ (i.e. $\bigcup_{n\in\mathbb{N}}(\supp \mu)^n=\Gamma$) such that the Poisson boundary of $(\Gamma,\mu)$ is trivial. In this section, we will prove an analogue for tracial von Neumann algebras.

\begin{definition}
    For $\mu\in\prob(\mathcal{U}(M))$, we say that $\mu$ is an \textbf{atomic measure} if 
    $$\mu=\sum_{\mu(u)>0} \mu(u)\cdot \delta_{u}. $$
    In this case, we simply denote $\{u\in \mathcal{U}(M)\mid \mu(u)>0\}$ by $\suppess \mu$, which is a countable $\mu$-conull (hence dense) subset of $\supp \mu$.
\end{definition}

\begin{definition}
   For (atomic) $\mu \in \prob(\mathcal{U}(M))$, define the (normal) hyperstate $\varphi_\mu \in \mathcal{S}_\tau(B(L^2(M))$ to be
   $$\varphi_\mu(T)=\int_{\mathcal{U}(M)} \langle T u^*\hat{1},u^*\hat{1}\rangle \d \mu (u), \ T\in B(L^2(M)).$$ 
\end{definition}

According to \cite[Theorem 2.10]{DP20}, to find a normal hyperstate with trivial boundary, we only need to find a nice hyperstate $\varphi$, such that $\varphi^{*n}$ tends to be $M$-central when $n \to\infty$. Before constructing the $\varphi$ we need, we also need the following lemma to make sure such $\varphi$ can be obtained through a measure on $\mathcal{U}(M)$.

\begin{lemma}\label{3} Let $(M, \tau )$ be an amenable tracial von Neumann algebra with separable
predual and
$$\mathcal{S}_0=\{\varphi_\mu \in \mathcal{S}_\tau(B(L^2(M))\mid \mu \in \prob(\mathcal{U}(M)) \mathrm{\ is\  of\ finite\ support}\}.$$
Then there exists a hypertrace in the weak* closure of $\mathcal{S}_0$.
\end{lemma}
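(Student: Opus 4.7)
The plan is to exploit hyperfiniteness to reduce to finite-dimensional subalgebras, where Haar measure yields natural conjugation-invariant hyperstates that can be approximated by finitely supported ones. Since $(M,\tau)$ is amenable with separable predual, Connes' theorem gives an increasing sequence of finite-dimensional unital subalgebras $M_1\subset M_2\subset\cdots\subset M$ whose union is weak*-dense, and hence $\|\cdot\|_2$-dense, in $M$. For each $n$, the unitary group $\mathcal{U}(M_n)$ is a compact Lie group; I would let $h_n$ denote its normalized Haar probability measure, viewed as a probability on $\mathcal{U}(M)$ supported on $\mathcal{U}(M_n)$, and form the hyperstate $\varphi_{h_n}\in\mathcal{S}_\tau(B(L^2(M)))$.

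The first step is to observe that $\varphi_{h_n}\in\overline{\mathcal{S}_0}^{w^*}$: since $u\mapsto\widehat{u^*}$ is a $\|\cdot\|_2$-isometric map into $L^2(M)$, the function $u\mapsto\langle T\widehat{u^*},\widehat{u^*}\rangle$ is continuous on the compact group $\mathcal{U}(M_n)$, so approximating $h_n$ by finitely supported probability measures in the usual weak topology produces elements of $\mathcal{S}_0$ converging to $\varphi_{h_n}$ in the weak* topology on $B(L^2(M))^*$. The second step is a change-of-variable using right invariance of Haar:
$$\varphi_{h_n}(uTu^*)=\int_{\mathcal{U}(M_n)}\langle T\widehat{(vu)^*},\widehat{(vu)^*}\rangle\,dh_n(v)=\varphi_{h_n}(T)$$
for every $u\in\mathcal{U}(M_n)$ and $T\in B(L^2(M))$. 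Fixing a free ultrafilter $\omega\in\beta\mathbb{N}\setminus\mathbb{N}$, I would then set $\psi(T)=\lim_\omega\varphi_{h_n}(T)$: this is a $\tau$-hyperstate lying in $\overline{\mathcal{S}_0}^{w^*}$, and since each $u\in\bigcup_n\mathcal{U}(M_n)$ belongs to some $\mathcal{U}(M_{n_0})$, the invariance $\varphi_{h_n}(uTu^*)=\varphi_{h_n}(T)$ for $n\geq n_0$ passes to the ultralimit to give $\psi(uTu^*)=\psi(T)$.

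The main obstacle will be upgrading this invariance from the $\|\cdot\|_2$-dense $*$-subalgebra $\bigcup_n M_n\subset M$ to all of $M$: $\psi$ is typically not normal, so one cannot directly take weak* limits on the $M$-slot. My plan for this step is a Cauchy--Schwarz argument that uses only $\psi|_M=\tau$. Given $x\in M$, I would pick $x_k\in\bigcup_n M_n$ with $\|x_k-x\|_2\to 0$, note that each $x_k$ is a linear combination of unitaries in $\bigcup_n\mathcal{U}(M_n)$ so that $\psi([x_k,T])=0$, and then apply the Cauchy--Schwarz inequality for the state $\psi$ together with $\psi|_M=\tau$ and traciality to obtain
$$|\psi((x_k-x)T)|^2\leq\psi\bigl((x_k-x)(x_k-x)^*\bigr)\,\psi(T^*T)\leq\|x_k-x\|_2^2\,\|T\|^2,$$
and similarly for $\psi(T(x_k-x))$. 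Passing to the limit then yields $\psi([x,T])=0$ for every $x\in M$ and $T\in B(L^2(M))$, showing that $\psi$ is a hypertrace belonging to $\overline{\mathcal{S}_0}^{w^*}$.
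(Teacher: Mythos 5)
Your proposal is correct and takes essentially the same approach as the paper: Connes' hyperfiniteness, Haar measure on the unitary groups of the finite-dimensional subalgebras, approximation of Haar by finitely supported measures, and a free ultralimit. The only difference lies in how the commutation with all of $M$ (rather than just the $\|\cdot\|_2$-dense union $\bigcup_n M_n$) is obtained: the paper packages the ultralimit as $\tau\circ\Phi$ where $\Phi=\lim_\omega\Phi_n$ is an $M$-bimodular u.c.p. map whose range lands in $(JMJ)'=M$ (making $\Phi$ a conditional expectation, whence $\tau\circ\Phi$ is automatically a hypertrace), whereas you work directly with the state $\psi$ and upgrade the invariance from the dense subalgebra to $M$ via Cauchy--Schwarz together with $\psi|_M=\tau$. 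Both routes are valid and comparably short.
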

\begin{proof}
    By Connes' fundamental theorem (\cite{Co75}), $M$ is hyperfinite. Assume that $M=\bigvee_n Q_n$, where $Q_n$ is an increasing sequence of unital finite dimensional $\ast$-subalgebras of $M$.
    Then a conditional expectation $\Phi : B(L^2(M))\to M$ can be given by
    \begin{equation}\label{1}
        \Phi(T)=\lim_{n\to\omega}\int_{\mathcal{U}(Q_n)}(Ju^*J)T(JuJ)\d \mu_n(u) :=\lim_{n\to\omega} \Phi_n(T), \ T\in B(L^2(M)), 
    \end{equation}
    where $\mu_n $ is the unique Haar measure on compact group $\mathcal{U}(Q_n)$ and $\omega\in\beta \mathbb{N}\setminus \mathbb{N}$. 
    
    Obviously, each $\Phi_n$ is a contractive unital $M$-bimodular map, hence so is $\Phi$. For any $m\geq n$ and $u\in \mathcal{U}(Q_n)\subset \mathcal{U}(Q_m)$, by the right invariance of $\mu_m$, $\Phi_m(T)$ commutes with $JuJ$, hence so does $\Phi(T)$. Therefore  $\Phi(T)$  commutes with $JQ_nJ$, let $n \to +\infty$, we know that $\Phi(T)\in (JMJ)'=M$, so $\Phi$ is a conditional expectation from $B(L^2(M))$ to $M$.
    Then 
    $$\varphi_0(T)= \langle \Phi(T) \hat{1},\hat{1}\rangle=\tau(\Phi(T)), \ T\in B(L^2(M))$$
    defines an hypertrace. 
    
    By (\ref{1}), we also have
    $$\varphi_0(T)=\lim_{n\to\omega}\int_{\mathcal{U}(Q_n)}\langle T u^*\hat{1},u^*\hat{1}\rangle\d \mu_n(u)=\lim_{n\to\omega}\varphi_{\mu_n}(T).$$
    Therefore, $\varphi_0=\lim_{n\to\omega}\limits \varphi_{\mu_n}$ is in the weak* closure of $\{ \varphi_{\mu_n}\}$. 

    To show $\varphi_0$ is in the weak* closure of $\mathcal{S}_0$, we only need to show each $\varphi_{\mu_n}$ is in it. Fix a $n\in\mathbb{N}$. Since $\mathcal{U}(Q_n)$ is compact, for any $\epsilon > 0$, there exists $u_1,...,u_N\in \mathcal{U}(Q_n)$ such that $\mathcal{U}(Q_n)\subset \bigcup_{i=1}^{N}\limits B(u_i, \epsilon)$. Let
    $$E_1=B(u_1,\epsilon),\ E_i=B(u_i,\epsilon) \setminus \bigcup_{j<i}B(u_j,\epsilon)\ (i\geq 2).$$
    Define $\mu_n'\in \prob(\mathcal{U}(M))$ with finite support by
    $$\mu_n'=\sum_{i=1}^{N}\mu_n(E_i)\delta_{u_i}.$$
    Then for any $u\in E_i\subset B(u_i,\epsilon)$, $T \in B(L^2(M))$,
    $$|\langle T u^*\hat{1},u^*\hat{1}\rangle-\langle T u_i^*\hat{1},u_i^*\hat{1}\rangle|\leq |\langle T (u^*-u_i^*)\hat{1},u^*\hat{1}\rangle|+|\langle T u_i^*\hat{1},(u_i^*-u)\hat{1}\rangle|\leq 2\epsilon \|T\|.$$
    Therefore,
    $$|\varphi_{\mu_n}(T)-\varphi_{\mu_n'}(T)|\leq \sum_{i=1}^{N}\int_{E_i}|\langle T u^*\hat{1},u^*\hat{1}\rangle-\langle T u_i^*\hat{1},u_i^*\hat{1}\rangle|\d \mu_n(u)\leq 2\epsilon \|T\|.$$
    
    So for any $\epsilon>0$, there exist a $\varphi_{\mu_n'}\in \mathcal{S}_0$ such that $\|\varphi_{\mu_n}-\varphi_{\mu_n'}\|\leq 2 \epsilon$. Hence $\varphi_{\mu_n}$ is in the weak* closure of $\mathcal{S}_0$, and so is $\varphi_0$
\end{proof}

The following theorem is inspired by \cite[Theorem 4.3]{KV82}.

\begin{theorem}\label{4} 
    Let $(M, \tau )$ be an amenable tracial von Neumann algebra with separable predual. Then there exists an atomic measure $\mu\in\prob(\mathcal{U}(M))$, such that $\bigcup_{n\in\mathbb{N}}(\suppess\mu)^n$ is weakly dense in $\mathcal{U}(M)$ and the normal hyperstate $\varphi=\varphi_\mu$ satisfies $\varphi \geq \frac{1}{2}\langle \, \cdot \, \hat{1},  \hat{1} \rangle$, and for any $x \in M$, 
    $$\lim_{n \to \infty}\limits \|\varphi^{*n} (x \, \cdot \, )- \varphi^{*n} ( \, \cdot \, x) \|=0,$$
    which is equivalent to 
    $$\lim_{n \to \infty}\limits \|xA_n- A_n x \|_{1,Tr}=0,$$
    where  $A_n$ is the trace-class operator associated to $\varphi^{*n}$.
\end{theorem}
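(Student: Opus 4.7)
The plan is to build $\mu$ by combining Lemma 3 with a Mazur-theorem convex-combination step and a Kaimanovich--Vershik-style diagonal induction. By Lemma 3, there exists a sequence $\{\lambda_k\}\subset\prob(\mathcal{U}(M))$ of finitely-supported probability measures whose hyperstates $\varphi_{\lambda_k}$ converge in the weak* topology of $B(L^2(M))^*$ to some $M$-central hypertrace $\varphi_0$. Since $(M,\tau)$ has separable predual, I would fix a countable $\sigma$-strong dense subset $\{x_j\}_{j\geq1}$ of the unit ball $(M)_1$ and a countable weakly dense subset $\{w_j\}_{j\geq1}$ of $\mathcal{U}(M)$.

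Next I would upgrade the weak* approximation to norm control on commutators. For each $j$, the functional $\psi_k^{(j)}:=\varphi_{\lambda_k}(x_j\,\cdot\,)-\varphi_{\lambda_k}(\,\cdot\,x_j)$ is normal on $B(L^2(M))$ and hence lies in the predual $B(L^2(M))_*$. The centrality of $\varphi_0$ combined with the weak* convergence $\varphi_{\lambda_k}\to\varphi_0$ implies $\psi_k^{(j)}\to0$ weakly in $B(L^2(M))_*$. By Mazur's theorem together with a Cantor diagonal over $j$, finite convex combinations of the $\lambda_k$ produce a sequence $\{\nu_l\}\subset\prob(\mathcal{U}(M))$ of finitely-supported probability measures such that
$$\bigl\|\varphi_{\nu_l}(x_j\,\cdot\,)-\varphi_{\nu_l}(\,\cdot\,x_j)\bigr\|\xrightarrow[l\to\infty]{}0\qquad\text{for every }j\geq1.$$

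I would then define the atomic measure
$$\mu=\tfrac12\delta_1+\tfrac12\sum_{l\geq1}d_l\bigl((1-2^{-l})\nu_l+2^{-l}\delta_{w_l}\bigr),$$
with positive weights $d_l$ summing to $1$, to be fixed by a second diagonal induction. The mass $\tfrac12$ at the identity gives $\varphi_\mu\geq\tfrac12\langle\,\cdot\,\hat 1,\hat 1\rangle$, and the inclusion $\{w_l\}\subset\suppess\mu$ guarantees that $\bigcup_{n}(\suppess\mu)^n$ is weakly dense in $\mathcal{U}(M)$.

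The main obstacle is verifying the asymptotic centrality of the iterates, which by \cite[Theorem 2.10]{DP20} is equivalent to triviality of the Poisson boundary $\mathcal{B}_{\varphi_\mu}$. The key contraction estimate
$$\|\varphi*\nu(x\,\cdot\,)-\varphi*\nu(\,\cdot\,x)\|\leq\|\varphi(x\,\cdot\,)-\varphi(\,\cdot\,x)\|,$$
valid for any hyperstates $\varphi,\nu$ (by $M$-bimodularity of $\mathcal{P}_\nu$), shows that $n\mapsto\|\varphi_\mu^{*n}(x\,\cdot\,)-\varphi_\mu^{*n}(\,\cdot\,x)\|$ is non-increasing. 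The delicate step, following KV's strategy, is to choose the weights $d_l$ so that, upon expanding $\varphi_\mu^{*n}$ as a convex combination of products $\varphi_{\nu_{l_1}}*\cdots*\varphi_{\nu_{l_n}}$, the tail contribution from products whose smallest index $\min_i l_i$ exceeds any prescribed threshold eventually dominates as $n$ grows. Since each such product has commutator norm arbitrarily small on any prescribed finite subfamily of $\{x_j\}$, this yields the required decay for each $x_j$, and by density for all $x\in M$; coordinating the two-layered diagonal (over the accuracy parameter $l$ and over the test family index $j$) is the core technical point.
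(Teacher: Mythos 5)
Your proposal correctly identifies the skeleton of the argument: start from the hypertrace of Lemma~\ref{3}, upgrade weak* approximate centrality to norm approximate centrality of the commutator functionals via a Hahn--Banach/Mazur convexity step, build $\mu$ as a carefully weighted atomic sum with half the mass at $1$ and with a dense set of unitaries represented, exploit the monotonicity $\|\varphi^{*(n+1)}(x\,\cdot\,)-\varphi^{*(n+1)}(\,\cdot\,x)\|\leq\|\varphi^{*n}(x\,\cdot\,)-\varphi^{*n}(\,\cdot\,x)\|$ coming from $\|\mathcal{P}_\varphi\|\leq1$, and finish with a Kaimanovich--Vershik expansion of $\varphi_\mu^{*n}$ into convolution products. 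This is the paper's route in outline.

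However, there is a genuine gap at the heart of the expansion argument, and it is exactly the step you label as the ``delicate'' and ``core technical point.'' You write that products $\varphi_{\nu_{l_1}}*\cdots*\varphi_{\nu_{l_n}}$ with one large index ``have commutator norm arbitrarily small on any prescribed finite subfamily of $\{x_j\}$.'' This is not automatic and is in fact false if the $\nu_l$ are only chosen to approximately commute with $\{x_1,\dots,x_l\}$. The issue is that approximate centrality of a single convolution factor $\varphi_{\nu_{l_j}}$ on $x$ does not propagate through the preceding factors $\varphi_{\nu_{l_1}}*\cdots*\varphi_{\nu_{l_{j-1}}}$: to push $x$ from the outside through those factors one must conjugate $x$ by unitaries lying in the supports of $\nu_{l_1},\dots,\nu_{l_{j-1}}$, and then one needs the good factor $\varphi_{\nu_{l_j}}$ to approximately commute with \emph{those unitaries as well}. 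The paper handles this by a careful self-referential induction: the set $F_n$ on which $\varphi_n$ is required to be $2^{-n}$-central is taken to be the $m_n$-th power of $\{1\}\cup K_n\cup\bigcup_{i<n}\operatorname{supp}\mu_i$, where $m_n$ is chosen large enough that $(t_1+\cdots+t_{n-1})^{m_n}<2^{-n}$. One then identifies in each bad product the \emph{first} factor of index $\geq n$; all earlier factors have indices $<n$, hence supports in $F_n\subset F_{k_j}$, and the conjugation step goes through. Your proposal, by requiring centrality only against $\{x_j\}$, omits exactly this cumulative-support requirement, and without it the tail estimate collapses. A secondary but related slip: the quantity that must exceed the threshold is $\max_i l_i$, not $\min_i l_i$; the set of tuples with large minimum has vanishing weight, whereas the set with large maximum has weight tending to $1$ as $n\to\infty$, which is precisely why the growth condition on $m_n$ is needed.

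In short, to repair the proof you must (a) replace $\min$ by $\max$, (b) choose each $\nu_l$ to be approximately central not only on a finite test family in $(M)_1$ but on the entire $m_l$-fold product set $\bigl(\{1\}\cup K_l\cup\bigcup_{i<l}\operatorname{supp}\nu_i\bigr)^{m_l}$ for a suitable $m_l$, and (c) carry out the conjugation argument to reduce approximate centrality of a long convolution product to approximate centrality of its first large-index factor on both the test element and the accumulated unitaries. These are not cosmetic adjustments; without (b) and (c) the claimed decay of commutator norms is unsubstantiated.
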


\begin{proof}
    By Lemma \ref{3}, there exists an hypertrace in the weak* closure of $\mathcal{S}_0$. Therefore, by the Hahn-Banach separation theorem, for any finite subset $E\subset M$ and $\epsilon > 0$, there exists a $\psi\in \mathcal{S}_0$ such that for any $x \in E$, 
    $$\|\psi (x \, \cdot \, )- \psi ( \, \cdot \, x) \|< \epsilon.$$
    
    We construct a sequence $\{\varphi_n\}\subset \mathcal{S}_0$ as follows:\\
    Let $\{1\}=K_1\subset K_2 \subset ...$ be an increasing sequence of finite subsets of $(M)_1$, such that $K:=\bigcup_{n=1}^{\infty}K_n$ is strongly dense in $(M)_1$. Let $\{1\}=U_1\subset U_2 \subset ...$ be an increasing sequence of finite subsets of $\mathcal{U}(M)$, such that $U:=\bigcup_{n=1}^{\infty}U_n$ is weakly dense in $\mathcal{U}(M)$. Take $\frac{1}{2}=t_1>t_2>...$ to be a decreasing series of positive real number, such that $\sum_{n=1}^{\infty}\limits t_n=1$. 
    
    For $n=1$, we take $\varphi_1$ associated to $\mu_1=\delta_{\{1\}}$ by
    $$\varphi_1(\, \cdot \, )= \langle \, \cdot \, \hat{1},  \hat{1} \rangle.$$
    
    Assume that we already have $\varphi_1,...,\varphi_{n-1}$ associated to $\mu_1,...,\mu_{n-1}$. We construct $\varphi_n$ as follows:\\
    Take $m_n\in \mathbb{N}$ such that
    $$(t_1+t_2+\cdots+t_{n-1})^{m_n}<2^{-n}.$$
    Let $F_n=(\{1\}\bigcup K_n \bigcup (\bigcup_{i=1}^{n-1}\limits \supp \mu_i))^{m_n}$ be a finite subset of $(M)_1$.

    We take $\varphi_{\mu'_n}\in \mathcal{S}_0$ satisfying for any $x \in F_n$, 
    $$\|\varphi_{\mu'_n} (x \, \cdot \, )- \varphi_{\mu'_n} ( \, \cdot \, x) \|< 2^{-n-1}.$$
    Take a finitely supported $\mu''_n\in \prob(\mathcal{U}(M))$ with $\supp \mu''_n=U_n$. Then take $$\mu_n=(1-2^{-n-2})\mu'_n+2^{-n-2}\mu''_n\in \prob(\mathcal{U}(M))$$ and $$\varphi_n =\varphi_{\mu_n}\in \mathcal{S}_0.$$ Then $U_n\subset \supp \mu_n$ and $\varphi_n$ satisfies that for any $x \in F_n$, 
    $$\|\varphi_{n} (x \, \cdot \, )- \varphi_{n} ( \, \cdot \, x) \|\leq (1-2^{-n-2})\|\varphi_{\mu'_n} (x \, \cdot \, )- \varphi_{\mu'_n} ( \, \cdot \, x) \|+2^{-n-2}\|\varphi_{\mu''_n} (x \, \cdot \, )- \varphi_{\mu''_n} ( \, \cdot \, x) \|< 2^{-n}.$$

    Now we have constructed $\{\varphi_n\}\subset\mathcal{S}_0$, satisfying that for any $n\geq 1$ and $x\in F_n$, $$\|\varphi_{n} (x \, \cdot \, )- \varphi_{n} ( \, \cdot \, x) \|<2^{-n}.$$
    
    Let $\varphi\in \mathcal{S}_\tau(B(L^2(M)))$ be
    $$\varphi=\sum_{i=1}^{\infty}t_i \varphi_i,$$ 
    which is associated to the atomic measure 
    $$\mu=\sum_{i=1}^{\infty}t_i \mu_i.$$
    We will prove $\mu$ and $\varphi$ are exactly what we want.
    
    First of all, $U=\bigcup_n U_n \subset \bigcup_n \supp \mu_n=\suppess \mu$. Since $U$ is weakly dense in $\prob(\mathcal{U}(M))$, we have that $\bigcup_{n\in\mathbb{N}}(\suppess\mu)^n$ is weakly dense in $\prob(\mathcal{U}(M))$.
    
    Secondly, we obviously have 
    $$\varphi \geq t_1\varphi_1 = \frac{1}{2}\langle \, \cdot \, \hat{1},  \hat{1} \rangle.$$
    
    Now let's prove that for $x\in K$,
    $$\lim_{n \to \infty}\limits \|\varphi^{*n} (x \, \cdot \, )- \varphi^{*n} ( \, \cdot \, x) \|=0.$$

    For a fixed $x \in K$, there exists a $n_0 \in \mathbb{N}$ such that $x\in K_n$ for any $n\geq n_0$. For any $n\geq n_0$, we consider $\|\varphi^{*m_n} (x \, \cdot \, )- \varphi^{*m_n} ( \, \cdot \, x) \|$. 

    For $k=(k_1,...,k_{m_n})\in {\mathbb{N}_+^{m_n}}$, denote $|k|:=\max_{j}\limits k_j$. Then
    $$
    \begin{aligned}
    \varphi^{*m_n}&=\sum_{k\in {\mathbb{N}_+^{m_n}}} t_{k_1}\cdots t_{k_{m_n}} \varphi_{k_1}*\cdots *\varphi_{k_{m_n}}\\
    &=\sum_{|k|\leq n-1 } t_{k_1}\cdots t_{k_{m_n}} \varphi_{k_1}*\cdots *\varphi_{k_{m_n}} + \sum_{|k|\geq n } t_{k_1}\cdots t_{k_{m_n}} \varphi_{k_1}*\cdots *\varphi_{k_{m_n}}\\
    :&=\psi_1+\psi_2.
    \end{aligned}
    $$

    Obviously, by how we pick $m_n$, 
    $$\|\psi_1\|\leq \sum_{|k|\leq n-1 } t_{k_1}\cdots t_{k_{m_n}}=(t_1+\cdots +t_{n-1})^{m_n} <2^{-n}.$$
    Therefore,
    $$\|\psi_1 (x \, \cdot \, )- \psi_1 ( \, \cdot \, x) \|\leq 2 \|\psi_1\| <2^{-n+1}.$$
    
    To prove $\|\psi_2 (x \, \cdot \, )- \psi_2 ( \, \cdot \, x) \| <C\cdot 2^{-n}$, it is enough to prove that for $|k|\geq n$,
    $$\|\varphi_{k_1}*\cdots *\varphi_{k_{m_n}} (x \, \cdot \, )- \varphi_{k_1}*\cdots *\varphi_{k_{m_n}}( \, \cdot \, x) \| <C\cdot 2^{-n}.$$
    For a fixed $k$ such that $|k|\geq n$, let $j$ be the lowest index such that the inequality $k_j\geq n$ holds. Then we can rewrite $\varphi_{k_1}*\cdots *\varphi_{k_{m_n}} $ in the form
    $$\varphi_{k_1}*\cdots *\varphi_{k_{m_n}}=\eta_1*\varphi_{k_j}*\eta_2.$$
    Since $\eta_1=\varphi_{k_1}*\cdots *\varphi_{k_{j-1}}\in \mathcal{S}_0$, we may assume that for $T\in B(L^2(M))$,
    $$\eta_1(T)=\sum_{i=1}^{N}\mu_{\eta_1}(u_i) \cdot \langle Tu^*_i \hat{1}, u^*_i \hat{1} \rangle, $$
    where $\{u_1,...,u_N\} =\supp \mu_{\eta_1} \subset \prod_{l=1}^{j-1}\limits \supp \mu_{k_l} \subset F_n$.
    For any $T\in B(L^2(M))$, 
    $$
    \begin{aligned}
    \eta_1*\varphi_{k_j}(T)&=\eta_1(\CP_{\varphi_{k_j}}(T))\\
    &=\sum_{i=1}^{N}\mu_{\eta_1}(u_i) \cdot \langle \CP_{\varphi_{k_j}}(T)u^*_i \hat{1}, u^*_i \hat{1} \rangle\\
    &=\sum_{i=1}^{N}\mu_{\eta_1}(u_i) \cdot \langle \CP_{\varphi_{k_j}}(u_iTu^*_i) \hat{1},  \hat{1} \rangle\\
    &=\sum_{i=1}^{N} \mu_{\eta_1}(u_i) \cdot \varphi_{k_j}(u_iTu^*_i).
    \end{aligned}
    $$
    Therefore, for any $T \in B(L^2(M))$ such that $\Vert T\Vert\leq 1$,
    $$
    \begin{aligned}
    |\eta_1*\varphi_{k_j}(xT)-\eta_1*\varphi_{k_j}(Tx)|=&|\sum_{i=1}^{N} \mu_{\eta_1}(u_i) \cdot \varphi_{k_j}(u_ixTu^*_i)-\sum_{i=1}^{N} \mu_{\eta_1}(u_i) \cdot \varphi_{k_j}(u_iTxu^*_i)|\\
    \leq  &\sum_{i=1}^{N} \mu_{\eta_1}(u_i) \cdot |\varphi_{k_j}(u_ixTu^*_i)-\varphi_{k_j}(xT)|\\
    &+ \sum_{i=1}^{N} \mu_{\eta_1}(u_i) \cdot |\varphi_{k_j}(u_iTxu^*_i)-\varphi_{k_j}(Tx)|\\
    &+|\varphi_{k_j}(xT)-\varphi_{k_j}(Tx)|.
    \end{aligned}
    $$
    Since $x\in K_n \subset F_{k_j}$, by how we pick $\varphi_n$, we have
    $$|\varphi_{k_j}(xT)-\varphi_{k_j}(Tx)|<2^{-k_j}\leq 2^{-n}.$$
    Since $u_i\in F_n \subset F_{k_j}$ and $\|xTu^*_i\|\leq 1$, we have
    $$|\varphi_{k_j}(u_ixTu^*_i)-\varphi_{k_j}(xT)|=|\varphi_{k_j}(u_i(xTu^*_i))-\varphi_{k_j}((xTu^*_i)u_i)|\leq \|\varphi_{k_j}(u_i\, \cdot \, )-\varphi_{k_j}(\, \cdot \, u_i)\|<2^{-k_j}\leq 2^{-n}.$$
    Similarly,
    $$|\varphi_{k_j}(u_iTxu^*_i)-\varphi_{k_j}(Txu^*_iu_i)| <   2^{-n}.$$
    Therefore, by the inequality above, we have
    $$|\eta_1*\varphi_{k_j}(xT)-\eta_1*\varphi_{k_j}(Tx)|<\sum_{i=1}^{N} \mu_{\eta_1}(u_i) \cdot  2^{-n}+\sum_{i=1}^{N} \mu_{\eta_1}(u_i) \cdot  2^{-n}+ 2^{-n}\leq 3 \cdot 2^{-n}.$$
    So
    $$\|\eta_1*\varphi_{k_j}(x\, \cdot \, )-\eta_1*\varphi_{k_j}(\, \cdot \, x)\|\leq 3 \cdot 2^{-n}.$$
    
    For any $T\in B(L^2(M))$,
    $$
    \begin{aligned}
    &\eta_1*\varphi_{k_j}*\eta_2(xT)-\eta_1*\varphi_{k_j}*\eta_2(Tx)\\
    =&\eta_1*\varphi_{k_j}(\CP_{\eta_2}(xT))-\eta_1*\varphi_{k_j}(\CP_{\eta_2}(Tx))\\
    =&\eta_1*\varphi_{k_j}(x\CP_{\eta_2}(T))-\eta_1*\varphi_{k_j}(\CP_{\eta_2}(T)x)\\
    =& ((\eta_1*\varphi_{k_j}(x\, \cdot \, )-\eta_1*\varphi_{k_j}(\, \cdot \, x))\circ \CP_{\eta_2})(T)
    \end{aligned}
    $$
    Therefore, 
    $$\|\eta_1*\varphi_{k_j}*\eta_2(x\, \cdot \, )-\eta_1*\varphi_{k_j}*\eta_2(\, \cdot \, x)\|\leq\|\eta_1*\varphi_{k_j}(x\, \cdot \, )-\eta_1*\varphi_{k_j}(\, \cdot \, x)\|\cdot\|\CP_{\eta_2}\| \leq 3\cdot 2^{-n}.$$

So now we already prove that for $k\in \mathbb{N}_+^{m_n}$ such that $|k|\geq n$,
$$\|\varphi_{k_1}*\cdots*\varphi_{k_{m_n}} (x \, \cdot \, )- \varphi_{k_1}*\cdots*\varphi_{k_{m_n}}( \, \cdot \, x) \| \leq 3 \cdot 2^{-n}.$$
Therefore, 
$$\|\psi_2 (x \, \cdot \, )- \psi_2 ( \, \cdot \, x) \| \leq 3\cdot 2^{-n}.$$
And we have
$$\|\varphi^{*m_n} (x \, \cdot \, )- \varphi^{*m_n} ( \, \cdot \, x) \|\leq \|\psi_1 (x \, \cdot \, )- \psi_1 ( \, \cdot \, x) \|+\|\psi_2 (x \, \cdot \, )- \psi_2 ( \, \cdot \, x) \|\leq 5\cdot 2^{-n}.$$
So 
$$\lim_{n \to \infty}\limits \|\varphi^{*m_n} (x \, \cdot \, )- \varphi^{*m_n} ( \, \cdot \, x) \|=0.$$
Since 
$$\varphi^{*(n+1)} (x \, \cdot \, )- \varphi^{*(n+1)} ( \, \cdot \, x)=\varphi^{*n} (\mathcal{P}_\varphi(x \, \cdot \, ))- \varphi^{*n} ( \mathcal{P}_\varphi(\, \cdot \, x))=\varphi^{*n} (x\mathcal{P}_\varphi( \, \cdot \, ))- \varphi^{*n} ( \mathcal{P}_\varphi(\, \cdot \, )x),$$
we have 
$$\|\varphi^{*(n+1)} (x \, \cdot \, )- \varphi^{*(n+1)} ( \, \cdot \, x)\|\leq \| \varphi^{*n} (x \, \cdot \, )- \varphi^{*n} ( \, \cdot \, x)\| \cdot \|\mathcal{P}_\varphi\|=\| \varphi^{*n} (x \, \cdot \, )- \varphi^{*n} ( \, \cdot \, x)\|.$$
Therefore, $\| \varphi^{*n} (x \, \cdot \, )- \varphi^{*n} ( \, \cdot \, x)\|$ is decreasing and there is a subsequence that converges to 0. We must have
$$\lim_{n \to \infty}\limits \|\varphi^{*n} (x \, \cdot \, )- \varphi^{*n} ( \, \cdot \, x) \|=0$$
for any $x \in K$.

Finally, we want to prove that for any $y\in (M)_1$, 
$$\lim_{n \to \infty}\limits \|\varphi^{*n} (y \, \cdot \, )- \varphi^{*n} ( \, \cdot \, y) \|=0.$$

Fixed a $y \in (M)_1$. Let $A_1$ be the associated trace-class operator of $\varphi$. For any $\epsilon>0$, since $K$ is strongly dense in $(M)_1$ (weakly density is not enough), there exists a $x \in K$ such that $\|(y-x)A_1\|_{1,Tr},\|A_1(y-x)\|_{1, Tr}<\epsilon$, which is equivalent to $\|\varphi((y-x)\, \cdot \, )\|,\|\varphi(\, \cdot \, (y-x))\|<\epsilon$. Then we have
$$\|\varphi^{*n} (y \, \cdot \, )- \varphi^{*n} ( \, \cdot \, y) \|\leq \|\varphi^{*n} (x \, \cdot \, )- \varphi^{*n} ( \, \cdot \, x) \|+ \|\varphi^{*n}((y-x)\, \cdot \, )\| + \|\varphi^{*n}(\, \cdot \, (y-x))\|.$$
Since 
$$\varphi^{*n}((y-x)\, \cdot \, )=\varphi(\mathcal{P}_\varphi^{n-1}((y-x)\, \cdot \, ))=\varphi((y-x)\mathcal{P}_\varphi^{n-1}(\, \cdot \, )),$$
we have
$$\|\varphi^{*n}((y-x)\, \cdot \, )\|\leq \|\varphi((y-x)\, \cdot \, )\|<\epsilon.$$
Similarly,
$$\|\varphi^{*n}(\, \cdot \, (y-x))\|<\epsilon.$$
Therefore, 
$$\limsup_{n\to+\infty}\limits \|\varphi^{*n} (y \, \cdot \, )- \varphi^{*n} ( \, \cdot \, y) \|\leq \lim_{n\to +\infty}\limits \|\varphi^{*n} (x \, \cdot \, )- \varphi^{*n} ( \, \cdot \, x) \|+2\epsilon =2\epsilon .$$
Let $\epsilon \to 0$, we have
$$\lim_{n\to+\infty}\limits \|\varphi^{*n} (y \, \cdot \, )- \varphi^{*n} ( \, \cdot \, y) \|=0.$$

So $\varphi$ satisfies all the conditions we want.

\end{proof}

Now we can prove the following main theorem of this section.

\begin{theorem}\label{amenable}
    Let $(M, \tau )$ be a tracial von Neumann algebra with separable predual. Then the following conditions are equivalent. 
    \begin{itemize}
        \item[(i)] There exists an atomic measure $\mu\in\prob(\mathcal{U}(M))$, such that $\bigcup_{n\in\mathbb{N}}(\suppess\mu)^n$ is weakly dense in $\prob(\mathcal{U}(M))$ and the normal hyperstate $\varphi=\varphi_\mu$ has trivial Poisson boundary;
        \item[(ii)] There exists a hyperstate $\varphi\in \mathcal{S}_\tau(B(L^2(M)))$ with trivial Poisson boundary;
        \item[(iii)] $(M, \tau )$ is amenable.
    \end{itemize}
\end{theorem}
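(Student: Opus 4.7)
The implication $(i) \Rightarrow (ii)$ is immediate since $\varphi_\mu$ is by definition a normal hyperstate. For $(iii) \Rightarrow (i)$, I would invoke Theorem \ref{4} directly: amenability of $(M,\tau)$ produces an atomic measure $\mu \in \prob(\mathcal{U}(M))$ whose support union $\bigcup_{n}(\suppess\mu)^n$ is weakly dense in $\mathcal{U}(M)$ and such that the associated normal hyperstate $\varphi = \varphi_\mu$ satisfies $\lim_n \|\varphi^{*n}(x\,\cdot\,) - \varphi^{*n}(\,\cdot\,x)\| = 0$ for every $x \in M$. This asymptotic $M$-centrality is exactly the hypothesis of the Das-Peterson criterion \cite[Theorem 2.10]{DP20} recalled at the opening of this section, so it immediately delivers triviality of $\CB_\varphi$.

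The real work lies in $(ii) \Rightarrow (iii)$. Assuming a hyperstate $\varphi$ with $\CB_\varphi = M$, equivalently $\har(\CP_\varphi) = M$, I would fabricate a hypertrace on $B(L^2(M))$ by weak$^*$-averaging the iterates of $\CP_\varphi$. Fix a free ultrafilter $\omega \in \beta\mathbb{N}\setminus \mathbb{N}$ and define $\Phi: B(L^2(M)) \to B(L^2(M))$ by
\[ \Phi(T) := \mathrm{weak}^*\text{-}\lim_{n \to \omega} \frac{1}{n}\sum_{k=0}^{n-1} \CP_\varphi^k(T). \]
By construction $\Phi$ is u.c.p. and $M$-bimodular. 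The standard Ces\`aro-shift estimate $\CP_\varphi\circ\Phi = \Phi$ (using normality of $\CP_\varphi$ and that $\|\CP_\varphi^n(T)\| \leq \|T\|$) forces $\Phi(T) \in \har(\CP_\varphi) = M$ for every $T$. Since $\CP_\varphi|_M = \id_M$, we have $\Phi|_M = \id_M$, so $\Phi$ is a unital positive projection of $B(L^2(M))$ onto the C$^*$-subalgebra $M$; by Tomiyama's theorem it is a conditional expectation. Then $\varphi_0 := \tau \circ \Phi$ is a state on $B(L^2(M))$ extending $\tau$, and $M$-bimodularity of $\Phi$ together with the trace property of $\tau$ yields $\varphi_0(xT) = \tau(x\Phi(T)) = \tau(\Phi(T)x) = \varphi_0(Tx)$ for every $x \in M$ and $T \in B(L^2(M))$. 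Hence $\varphi_0$ is a hypertrace, and Connes' theorem \cite{Co75} delivers the amenability of $(M,\tau)$.

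The main obstacle is effectively packed into Theorem \ref{4}, whose inductive construction of $\mu$ must simultaneously ensure asymptotic $M$-centrality of the convolutions $\varphi^{*n}$ and weak density of $\bigcup_n (\suppess\mu)^n$; once that is granted, the three implications collapse into soft consequences of the Das-Peterson triviality criterion and Connes' theorem via the Tomiyama-hypertrace argument above. The only minor technical point requiring attention is the verification that the weak$^*$-ultrafilter limit defining $\Phi$ lands in $M$, which is a routine application of the bounded-net weak$^*$ compactness of $B(L^2(M))$ and normality of $\CP_\varphi$.
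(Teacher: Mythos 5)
Your implications (i) $\Rightarrow$ (ii) and (iii) $\Rightarrow$ (i) match the paper exactly: the first is definitional, and the second invokes Theorem~\ref{4} together with the Das--Peterson triviality criterion \cite[Theorem~2.10]{DP20}, precisely as the paper does. Where you diverge is (ii) $\Rightarrow$ (iii). The paper disposes of this in one line by citing the injectivity of noncommutative Poisson boundaries \cite[Proposition~2.4]{DP20}: since $M = \har(\CP_\varphi)$, $M$ is injective, hence amenable. You instead \emph{re-derive} that injectivity by hand, via Ces\`aro averaging of the iterates of $\CP_\varphi$, Tomiyama's theorem, and the hypertrace $\tau\circ\Phi$ followed by Connes' theorem. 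Conceptually this is the same move (both arguments produce a norm-one $M$-bimodular projection of $B(L^2(M))$ onto $M$), but your version is self-contained where the paper outsources the work, and it carries an extra, unnecessary step: once you have the conditional expectation $\Phi\colon B(L^2(M)) \to M$, injectivity of $M$ is immediate and the hypertrace detour buys nothing.

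There is, however, a genuine subtlety in your Ces\`aro argument that the paper's citation avoids. Condition (ii) only asserts the existence of \emph{a} hyperstate $\varphi \in \mathcal{S}_\tau(B(L^2(M)))$ with trivial boundary; it does not require $\varphi$ to be normal. Your proof of $\CP_\varphi \circ \Phi = \Phi$ explicitly passes $\CP_\varphi$ through a weak$^*$-ultralimit, which needs $\CP_\varphi$ to be weak$^*$-continuous, i.e.\ $\varphi$ normal. Without normality, the Ces\`aro shift only yields $\Phi \circ \CP_\varphi = \Phi$ (this direction uses continuity of right composition, which is automatic), and that alone does not place $\Phi(T)$ inside $\har(\CP_\varphi)$. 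So as written, your argument only covers the case of normal $\varphi$, and you would need either to restrict (ii) to normal hyperstates, or to replace the Ces\`aro construction by a compactness argument in the right-topological semigroup of u.c.p.\ maps (or, most simply, to cite \cite[Proposition~2.4]{DP20} as the paper does).
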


\begin{proof}
    (i) $\Rightarrow$ (ii) is clear.

    (ii) $\Rightarrow$ (iii): Assume that the hyperstate $\varphi$ has trivial Poisson boundary. Then by the injectivity of Poisson boundaries  \cite[Proposition 2.4]{DP20}, $M=\har(\CP_\varphi)$ is injective (amenable).

    (iii) $\Rightarrow$ (i) is a direct corollary of \cite[Theorem 2.10]{DP20} and Theorem \ref{4}.
\end{proof}

\section{Choquet-Deny property of von Neumann algebras}

Recall that a locally compact group $G$ is called Choquet-Deny if for any admissible $\mu\in\prob(G)$, the Poisson boundary of $\mu$ is trivial. In this section, we will define the Choquet-Deny property for tracial von Neumann algebras and prove the equivalence between Choquet-Deny property and type $\mathrm{I}$, which is inspired by the fact that a finitely generated countable discrete group $\Gamma$ is Choquet-Deny if and only if $\Gamma$ is virtually nilpotent (see \cite{Ja04} and \cite{FHTV19}), and $\Gamma$ is virtually abelian if and only if $L(\Gamma)$ is of type $\mathrm{I}$ (see \cite{THOM64}).

\begin{definition}
    Let $(M, \tau )$ be a tracial von Neumann algebra with separable predual. We say that $(M,\tau)$ is \textbf{Choquet-Deny} if for any normal regular strongly generating hyperstate $\varphi\in \mathcal{S}_\tau(B(L^2(M)))$, one has $\CB_\varphi=M$.
\end{definition}

We only consider weakly separable $(M,\tau)$, because it is the only way that $M$ admits a normal strongly generating hyperstate.

According to \cite[Corollary 3.2]{DP20}, any abelian $(M,\tau)$ with separable predual is Choquet-Deny. Let's see more examples of Choquet-Deny and non-Choquet-Deny von Neumann algebras.

\begin{proposition}\label{Mn}
    For $n\in \mathbb{N}_+$, the type $\mathrm{I}_n$ factor $(M_n(\mathbb{C}), \frac{1}{n}\mathrm{Tr})$ is Choquet-Deny.
\end{proposition}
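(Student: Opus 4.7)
The plan is to show directly that the space of harmonic operators equals $M$ embedded by left multiplication on $L^2(M)$, which by the definition of the Poisson boundary identifies $\CB_\varphi$ with $M$. Since $M = M_n(\mathbb{C})$ is a type $\mathrm{I}_n$ factor, we have the von Neumann algebra decomposition $B(L^2(M)) = M \otimes JMJ$. The map $\CP_\varphi(T) = \sum_k (Jz_k^*J)T(Jz_kJ)$ only involves conjugation by elements of $JMJ$, so it has the form $\id_M \otimes \Phi$, where under the $\ast$-isomorphism $JMJ \cong M$ given by $JaJ \leftrightarrow a$ the map $\Phi:M_n \to M_n$ becomes $\Phi(w) = \sum_k z_k^*\,w\,z_k$. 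Because $M$ is finite-dimensional, expanding any $T$ in a basis of $M$ shows $\har(\id_M \otimes \Phi) = M \otimes \har(\Phi)$, so it suffices to prove $\har(\Phi) = \mathbb{C}$.

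For this I would proceed by a Kadison--Schwarz equality argument. By regularity, $\Phi$ is unital and preserves the normalised trace $\tau = \frac{1}{n}\mathrm{Tr}$. Given $x \in \har(\Phi)$, Kadison--Schwarz gives $\Phi(x^*x) \geq \Phi(x)^*\Phi(x) = x^*x$; taking the trace, the nonnegative difference $\Phi(x^*x) - x^*x$ has trace zero, hence vanishes by faithfulness of $\tau$. Thus the equality $\Phi(x^*x) = \Phi(x)^*\Phi(x)$ holds. Realising $\Phi$ in Stinespring form as $\Phi(a) = V^*(a \otimes 1)V$ with the isometry $V\xi = \sum_k (z_k\xi)\otimes e_k$, this equality is equivalent to $(1 - VV^*)(x\otimes 1)V = 0$, i.e.\ $(x \otimes 1)V = V\Phi(x) = Vx$, which upon evaluating at $\xi$ unpacks coordinatewise to $xz_k = z_kx$ for every $k$.

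By the strong-generating assumption, $\{z_k\}$ generates $M_n$ as a weakly closed unital algebra, so any $x$ commuting with every $z_k$ lies in $M_n' \cap M_n = \mathbb{C}$. Hence $\har(\Phi) = \mathbb{C}$, which gives $\har(\CP_\varphi) = M$ and $\CB_\varphi = M$. I do not anticipate a serious obstacle: the argument is essentially finite-dimensional, and the one feature that does real work—the tensor decomposition $\CP_\varphi = \id_M \otimes \Phi$ coming from $B(L^2(M)) = M \otimes JMJ$—is precisely the type $\mathrm{I}$ factor structure that should generalise to Theorem C. The only small care needed is that the Kraus family $\{z_k\}$ may be infinite, which is harmless as $V^*V = \sum_k z_k^*z_k = 1$ still defines a bona fide isometry.
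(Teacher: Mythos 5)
Your proof is correct, and it takes a genuinely different route from the one in the paper. The paper works directly with a self-adjoint $T\in\har(\CP_\varphi)$ written in Kraus form $\CP_\varphi(T)=\sum_m w_m^*Tw_m$ with $w_m\in M_n(\mathbb{C})'$, takes the spectral decomposition $T=\sum_k\lambda_kP_k$ with distinct eigenvalues, and runs a maximal-eigenvalue argument: for a top eigenvector $v_1$ one has equality in $\langle\sum_m w_m^*Tw_m v_1,v_1\rangle\le\lambda_1$, forcing $w_mV_1\subset V_1$ for every $m$, hence $P_1\in (M_n')'=M_n(\mathbb{C})$; one then subtracts $\lambda_1P_1$ and inducts on the remaining eigenvalues. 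Your proof instead tensor-factorizes $\CP_\varphi=\id_M\otimes\Phi$ across $B(L^2(M))=M\otimes JMJ$, reduces the problem to $\har(\Phi)=\mathbb{C}$, and then applies a Kadison--Schwarz/multiplicative-domain argument: unitality and trace-preservation of $\Phi$ (both forced by regularity) turn the Schwarz inequality into an equality for harmonic $x$, and the Stinespring realisation shows this equality is exactly commutation with the Kraus operators, so $x\in M_n'\cap M_n=\mathbb{C}$ by strong generation. Both arguments are fully finite-dimensional in the places where that matters (the paper's spectral decomposition has finitely many eigenvalues; your basis-expansion step needs $\dim M<\infty$). Your version makes the precise role of the two halves of regularity more transparent (unital $\leftrightarrow\sum z_k^*z_k=1$, trace-preserving $\leftrightarrow\sum z_kz_k^*=1$), and the Kadison--Schwarz step is the same idea that shows up elsewhere in the paper via Choi's multiplicative-domain theorem; the paper's version is more elementary and stays inside basic spectral theory without invoking the tensor decomposition. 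One small point worth being explicit about if you wrote this up: the identification $JaJ\leftrightarrow a$ you use to pass from $JMJ$ to $M$ is conjugate-linear (the linear identification is with $M^{\mathrm{op}}$), but since $\tilde\Phi(JwJ)=J(\sum_kz_k^*wz_k)J$, the fixed-point sets still correspond and the conclusion $\har(\tilde\Phi)=\mathbb{C}$ is unaffected.
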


\begin{proof}
    Note that we represent $M_n(\mathbb{C})$ on $L^2(M_n(\mathbb{C}), \frac{1}{n}Tr)$. Fix a normal regular strongly generating hyperstate $\varphi$ on $B(L^2(M_n(\mathbb{C})))$. Then there exists a sequence $\{w_m\}\subset M_n(\mathbb{C})'=M_n(\mathbb{C})'\cap B(L^2(M_n(\mathbb{C})))$, such that $\sum_{m=1}^{\infty} w_m w_m^*=\sum_{m=1}^{\infty} w_m^* w_m=1$, $\{w_m\}$ generates $M_n(\mathbb{C})'$ as a weakly closed unital subalgebra, and
    $$\mathcal{P}_\varphi(T)=\sum_{m=1}^{\infty}w_m^*Tw_m,\ T\in B(L^2(M_n(\mathbb{C}))).$$

    Fix a $T\in \mathrm{Har}(\mathcal{P}_\varphi)$, let's prove that $T\in M_n(\mathbb{C})$. By considering $\mathrm{Re\ }T,\ \mathrm{Im\ }T \in \mathrm{Har}(\mathcal{P}_\varphi)$, we may assume that $T$ is self-adjoint. Since $L^2(M_n(\mathbb{C}))$ is of finite dimension, $T$ is of finite rank. Consider the spectral decomposition of $T$:
    $$T=\sum_{k=1}^{N}\lambda_k P_k,$$
    where $\lambda_1>\lambda_2>...>\lambda_N$ are eigenvalues of $T$ and for $1\leq k\leq N$, $P_k$ is the orthogonal projection corresponding to the $\lambda_k$-eigenspaces $V_k$. For any $v_1\in (V_1)_1$ the unit ball of $V_1$, we have
    $$\lambda_1=\langle Tv_1, v_1\rangle= \langle\sum_{m=1}^{\infty}\limits w_m^*Tw_mv_1, v_1\rangle\leq \lambda_1 \langle\sum_{m=1}^{\infty}\limits w_m^*w_mv_1, v_1\rangle = \lambda_1 \langle v_1, v_1\rangle = \lambda_1,$$
    where the inequality holds for $T\leq \lambda_1 \cdot 1$. The third inequality becomes an equality now, so for any $m\geq 1$, we have
    $$\langle w_m^*Tw_mv_1, v_1\rangle= \lambda_1 \langle w_m^*w_mv_1, v_1\rangle,$$
    which is equivalent to
    $$\langle Tw_mv_1, w_mv_1\rangle= \lambda_1 \langle w_mv_1, w_mv_1\rangle=\lambda_1 \|w_mv_1\|^2.$$
    Since $\lambda_1$ is the greatest eigenvalue of $T$, we must have $w_mv_1\in V_1$. Hence for any $m\geq 1$, $w_mV_1\subset V_1$. Since $\{w_m\}$ generates $M_n(\mathbb{C})'$ as a weakly closed unital subalgebra, we have $M_n(\mathbb{C})'V_1 \subset V_1$. Therefore, $P_1=P_{V_1}\in M_n(\mathbb{C})''=M_n(\mathbb{C}).$
    
    Assume that we already have $P_1,...,P_k\in M_n(\mathbb{C})\subset\mathrm{Har}(\mathcal{P}_\varphi)$, take 
    $$T_k=\sum_{j=k+1}^{N}\limits\lambda_j P_j=T-\sum_{j=1}^{k}\limits \lambda_j P_j \in \mathrm{Har}(\mathcal{P}_\varphi).$$ 
    Now $\lambda_{k+1}$ becomes the greatest eigenvalue of $T_k$. By the same proof, we have $P_{k+1}\in M_n(\mathbb{C})$. Hence by induction, for any $1\leq k \leq N$, $P_k\in M_n(\mathbb{C})$ and $T=\sum_{k=1}^{N}\limits\lambda_k P_k\in M_n(\mathbb{C})$.

    Therefore, $\mathrm{Har}(\mathcal{P}_\varphi)=M_n(\mathbb{C})$. So $M_n(\mathbb{C})$ is Choquet-Deny.
\end{proof}

\begin{proposition}\label{LS}
   The unique amenable $\mathrm{II}_1$ factor $L(S_\infty)$ is not Choquet-Deny.
\end{proposition}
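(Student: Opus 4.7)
I would reduce to the classical (group) setting via Izumi's correspondence \cite[Theorem 4.1]{Iz04} recalled in the Introduction. It suffices to produce an admissible probability measure $\mu\in\prob(S_\infty)$ whose classical Poisson boundary $(B,\nu_B)$ is non-trivial, for then the associated hyperstate $\varphi_\mu$ will be a normal regular strongly generating hyperstate on $B(L^2(L(S_\infty)))$ whose noncommutative Poisson boundary is $L(S_\infty\curvearrowright B)$, strictly larger than $L(S_\infty)$.

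The existence of such a $\mu$ is supplied by the work of Frisch-Hartman-Tamuz-Vahidi Ferdowsi \cite{FHTV19}, which characterizes the countable discrete Choquet-Deny groups as those admitting no ICC quotient. The group $S_\infty$ of finitely supported permutations of $\mathbb{N}$ is itself ICC: any non-identity finitely supported permutation has infinitely many conjugates of the same cycle type. Hence $S_\infty$ fails to be Choquet-Deny as a discrete group, and \cite{FHTV19} furnishes an admissible $\mu\in\prob(S_\infty)$, i.e.\ $\bigcup_n(\supp\mu)^n=S_\infty$, whose Poisson boundary $(B,\nu_B)$ is non-trivial.

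Next I verify that the associated hyperstate $\varphi_\mu(T)=\sum_\gamma \mu(\gamma)\langle T\delta_{\gamma^{-1}},\delta_{\gamma^{-1}}\rangle$ satisfies the hypotheses in Izumi's theorem and in the Choquet-Deny definition. Setting $z_\gamma=\mu(\gamma)^{1/2}u_{\gamma^{-1}}$, each $z_\gamma$ is a scalar multiple of a unitary, so $z_\gamma^*z_\gamma=z_\gamma z_\gamma^*=\mu(\gamma)\cdot 1$ and regularity $\sum z_\gamma^*z_\gamma=\sum z_\gamma z_\gamma^*=1$ is automatic. Normality is immediate from the absolute convergence of the defining sum. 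Strong generation follows because the unital algebra generated by $\{u_{\gamma^{-1}}\}_{\gamma\in\supp\mu}$ contains $u_\delta$ for every $\delta$ in the semigroup generated by $(\supp\mu)^{-1}$, which by admissibility exhausts $S_\infty$; this algebra is therefore $\mathbb{C}[S_\infty]$, weakly dense in $L(S_\infty)$.

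By \cite[Theorem 4.1]{Iz04}, the $\varphi_\mu$-Poisson boundary is $\CB_{\varphi_\mu}\cong L(S_\infty\curvearrowright B)$, which contains $L^\infty(B,\nu_B)$ as a subalgebra. Since $(B,\nu_B)$ is non-trivial, $L^\infty(B,\nu_B)$ is a non-trivial diffuse abelian subalgebra not contained in $L(S_\infty)$, so $\CB_{\varphi_\mu}\supsetneq L(S_\infty)$, showing that $L(S_\infty)$ is not Choquet-Deny. The only nontrivial ingredient is the group-theoretic non-triviality result \cite{FHTV19}; everything else is a dictionary check using the explicit form of hyperstates arising from measures on a group.
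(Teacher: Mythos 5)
Your proof is correct and follows essentially the same route as the paper's: use the characterization from \cite{FHTV19} (via the ICC property of $S_\infty$) to produce an admissible $\mu\in\prob(S_\infty)$ with non-trivial classical Poisson boundary, then apply Izumi's correspondence \cite[Theorem 4.1]{Iz04} to conclude that $\mathcal{B}_{\varphi_\mu}=L(S_\infty\curvearrowright B)\supsetneq L(S_\infty)$. The paper states the same argument more tersely; your additional verification that $\varphi_\mu$ is normal, regular, and strongly generating is a useful dictionary check that the paper leaves implicit.
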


\begin{proof}
    According to \cite{Ja04} and \cite{FHTV19}, a countable discrete group is Choquet-Deny if and only if it has no $\mathrm{ICC}$ quotients. But $S_\infty$ itself is $\mathrm{ICC}$, so $S_\infty$ is not Choquet-Deny. 

    Therefore, there exists an admissible $\mu \in \prob(S_\infty)$, i.e. $\supp \mu$ generates $S_\infty$ as semigroup, such that the $\mu$-Poisson boundary $(B,\nu_B)$ is not trivial. 

    Also view $\mu$ as an element in $\prob(\mathcal{U}(L(S_\infty)))$. Then $\varphi_\mu$ is a normal regular strongly generating hyperstate. According to \cite[Theorm 4.1]{Iz04}, the $\varphi_\mu$-Poisson boundary $\CB_{\varphi_\mu}$ is $L(S_\infty\curvearrowright B)$. 
    
    Since $(B,\nu_B)$ is not trivial, $L(S_\infty\curvearrowright B)\not=L(S_\infty)$. Hence $L(S_\infty)$ is not Choquet-Deny.
\end{proof}

Now we do some preparations for the proof of Choquet-Deny - type $\mathrm{I}$ equivalence.

\begin{proposition}\label{Z(M)}
    Let $(M,\tau)$ be a tracial von Neumann algebra, let $\varphi$ be a normal regular strongly generating hyperstate. Then $\mathrm{Har}(\mathcal{P}_\varphi)\subset Z(M)'$.   
\end{proposition}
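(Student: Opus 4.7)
The plan is to apply Das--Peterson's rigidity of the Poisson boundary to conjugation by a central unitary of $M$. Fix $z \in \mathcal{U}(Z(M))$ and consider the inner $*$-automorphism $\Psi_z(S) = z^* S z$ of $B(L^2(M))$. The key observation is that $z \in M$ while each Kraus operator $Jz_n J$ lies in $M' = JMJ$, so $z$ commutes with every $Jz_n J$. Consequently
$$ \CP_\varphi(\Psi_z(S)) = \sum_n (Jz_n^*J)\, z^* S z\, (Jz_n J) = z^* \CP_\varphi(S) z = \Psi_z(\CP_\varphi(S)), $$
so $\Psi_z$ preserves $\har(\CP_\varphi)$, and through the Poisson transform $\CP:\CB_\varphi \to \har(\CP_\varphi)$ it descends to a normal $*$-automorphism of $\CB_\varphi$.

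Next I would verify that this induced automorphism is $M$-bimodular and fixes $M$ pointwise, both of which follow directly from the centrality of $z$. For $a \in M$ one has $z^* a z = a z^* z = a$, so $\Psi_z|_M = \id_M$; and for $a, b \in M$ and $T \in \har(\CP_\varphi)$, $\Psi_z(aTb) = z^* a T b z = a z^* T z b = a \Psi_z(T) b$, which transports to $M$-bimodularity on $\CB_\varphi$. Applying Das--Peterson's rigidity of the Poisson boundary \cite[Theorem 4.1]{DP20} --- which asserts that every normal $M$-bimodular u.c.p.\ map $\CB_\varphi \to \CB_\varphi$ is the identity --- then forces $\Psi_z = \id$ on $\CB_\varphi$. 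Translating back through the Poisson transform, this reads $zT = Tz$ in $B(L^2(M))$ for every $T \in \har(\CP_\varphi)$ and every $z \in \mathcal{U}(Z(M))$.

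Finally, since any von Neumann algebra is the norm-closed linear span of its unitaries, the commutation extends from $\mathcal{U}(Z(M))$ to all of $Z(M)$, yielding $\har(\CP_\varphi) \subset Z(M)'$. The only point that requires care is checking that the rigidity theorem genuinely applies to $\Psi_z$, but this reduces to the two centrality-based verifications above; I do not anticipate a deeper obstacle in this proof.
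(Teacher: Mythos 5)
Your proposal is correct and follows essentially the same route as the paper: conjugate by a central unitary, observe the map fixes $M$ pointwise, invoke Das--Peterson's rigidity theorem to conclude the conjugation is the identity on $\CB_\varphi$, and transport back through the Poisson transform. The paper is slightly more streamlined in that it works directly with the inner automorphism $x \mapsto uxu^*$ of $\CB_\varphi$ (where $u$ is already a unitary of $\CB_\varphi$ via the embedding $M \subset \CB_\varphi$) rather than first verifying $\CP_\varphi$-equivariance on $B(L^2(M))$, but this is a cosmetic difference, not a different argument.
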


\begin{proof}
    Fix a $u\in\mathcal{U}(Z(M))$, define $\Psi_u : \mathcal{B}_\varphi\to \mathcal{B}_\varphi$ to be
    $$\Psi_u(x)=uxu^*, \ x \in \mathcal{B}_\varphi.$$
    Then $\Psi_u$ is a normal u.c.p. map and $\Psi_u|_M= \mathrm{id}_M$. By the rigidity of Poisson boundary \cite[Theorem 4.1]{DP20}, we have $\Psi_u = \mathrm{id}_{\CB_\varphi}$.

    Therefore, for any $u\in\mathcal{U}(Z(M))$ and $x \in \mathcal{B}_\varphi$, we have $uxu^*=x$. By Poisson transform, which is $M$-bimodular, we know that for any $u\in\mathcal{U}(Z(M))$ and $T \in \mathrm{Har}(\mathcal{P}_\varphi)$, $uTu^*=T$. Therefore, $\mathrm{Har}(\mathcal{P}_\varphi)\subset Z(M)'$.
\end{proof}

For the next theorem, we refer to \cite[Chapter IV]{TakI} for details on direct integral theory.

\begin{theorem}\label{DI}
Let $(M,\tau)$ be a tracial von Neumann algebra with direct integral decomposition on Borel probability measure space $(X,\mu_X)$:
$$(M,\tau)=\int_X^\oplus (M_x,\tau_x)\d\mu_X(x).$$

Let $\varphi$ be a normal regular strongly
generating hyperstate on $B(L^2(M))$. By direct integral theory, $\varphi$ admits the following decomposition while restricting to the decomposable operators: 
$$\varphi|_{\int_X^\oplus B(L^2(M_x))\d\mu_X(x)}=\int_X^\oplus \varphi_x\d\mu_X(x),$$
where for $x\in X$, $\varphi_x$ is a normal $\tau_x$-hyperstate on $B(L^2(M_x))$.

Then we have 
$$\mathrm{Har}(\mathcal{P}_\varphi)=\int_X^\oplus \mathrm{Har}(\mathcal{P}_{\varphi_x})\d\mu_X(x)$$
and
$$\mathcal{B}_\varphi=\int_X^\oplus \mathcal{B}_{\varphi_x}\d\mu_X(x).$$
\end{theorem}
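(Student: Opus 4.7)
The strategy is to reduce everything to a fiberwise analysis through decomposable operators. The key input is Proposition \ref{Z(M)}: one has $\har(\CP_\varphi)\subset Z(M)'\cap B(L^2(M))$. Under the decomposition $(M,\tau)=\int_X^\oplus(M_x,\tau_x)\d\mu_X(x)$ the center $Z(M)$ acts on $L^2(M)=\int_X^\oplus L^2(M_x)\d\mu_X(x)$ as the diagonal copy of $L^\infty(X,\mu_X)$, so by standard direct integral theory $Z(M)'\cap B(L^2(M))$ coincides with the algebra of decomposable operators $\int_X^\oplus B(L^2(M_x))\d\mu_X(x)$. Consequently every $T\in\har(\CP_\varphi)$ automatically admits a decomposition $T=\int_X^\oplus T_x\d\mu_X(x)$.

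Next I would decompose $\CP_\varphi$ itself fiberwise. Writing $\CP_\varphi$ in the standard form $\CP_\varphi(T)=\sum_k(Jz_k^*J)T(Jz_kJ)$ with $z_k\in M$, and using the fiber decompositions $z_k=\int_X^\oplus(z_k)_x\d\mu_X(x)$ together with $Jz_kJ=\int_X^\oplus J_x(z_k)_xJ_x\d\mu_X(x)$, a direct computation shows that on decomposable operators $\CP_\varphi=\int_X^\oplus\CP_{\varphi_x}\d\mu_X(x)$. The interchange of the series with the direct integral is justified by positivity together with the fiberwise relations $\sum_k z_k^*z_k=1$. Hence $T=\int_X^\oplus T_x\d\mu_X(x)$ lies in $\har(\CP_\varphi)$ if and only if $T_x\in\har(\CP_{\varphi_x})$ for $\mu_X$-a.e.\ $x$, which establishes the first equality.

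For the second identity I would appeal to Izumi's uniqueness of the noncommutative Poisson boundary: the von Neumann algebra $\int_X^\oplus\CB_{\varphi_x}\d\mu_X(x)$ is completely order isomorphic, via the direct integral of the fiberwise Poisson transforms, to $\int_X^\oplus\har(\CP_{\varphi_x})\d\mu_X(x)=\har(\CP_\varphi)$; by Izumi's theorem any von Neumann algebra completely order isomorphic to $\har(\CP_\varphi)$ as an operator system must be canonically isomorphic to $\CB_\varphi$. The main obstacle is the measurable-field bookkeeping: one must verify that $\{\har(\CP_{\varphi_x})\}_{x\in X}$ and $\{\CB_{\varphi_x}\}_{x\in X}$ form genuine measurable fields of operator systems and of von Neumann algebras compatible with the ambient decomposition, and that the Choi--Effros product on $\har(\CP_\varphi)$ descends fiberwise to the Choi--Effros products on the $\har(\CP_{\varphi_x})$. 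This last point can be handled by realizing the product through $\CP_\varphi$-iterates, which decompose fiberwise by the first equality.
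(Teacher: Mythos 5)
Your proposal follows essentially the same route as the paper: Proposition~\ref{Z(M)} forces $\har(\CP_\varphi)$ into the decomposable operators, one then shows $\CP_\varphi$ restricts fiberwise to $\int_X^\oplus\CP_{\varphi_x}\d\mu_X(x)$, and the two equalities follow. The only difference is cosmetic: you obtain the fiberwise decomposition of $\CP_\varphi$ directly from the standard form $\sum_k(Jz_k^*J)\,\cdot\,(Jz_kJ)$ with $Jz_kJ$ decomposable, while the paper tests $\langle a\,\CP_\varphi(T)\,b\,\hat 1,\hat 1\rangle=\varphi(aTb)$ against $a,b\in M$ and uses the decomposition of $\varphi$; both computations are equivalent.

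One small imprecision worth flagging: for a general direct integral decomposition as in the statement, one only knows $L^\infty(X,\mu_X)\subset Z(M)$, so $Z(M)'\subset L^\infty(X)'=\int_X^\oplus B(L^2(M_x))\d\mu_X(x)$ rather than the equality you assert (equality requires the central decomposition, where the $M_x$ are factors). This does not affect your argument, since the inclusion is all that is used — but state the inclusion, not the equality.
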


\begin{proof}
Obviously, the diagonal operator $L^\infty(X,\mu_X)$ is contained in $Z(M)$. By Proposition \ref{Z(M)},
$$\mathrm{Har}(\mathcal{P}_\varphi)\subset Z(M)' \subset L^\infty(X,\mu_X)'=\int_X^\oplus B(L^2(M_x))\d\mu_X(x).$$

Fix a $T=\int_X^\oplus T_x \in\int_X^\oplus B(L^2(M_x))\d\mu_X(x)$. Since for any $a,b\in M$,
$$    
\begin{aligned}
    \langle a\left(\int_X^\oplus \mathcal{P}_{\varphi_x}(T_x)\right)b \hat{1},\hat{1}\rangle 
    = &\int_X^\oplus \langle a(x)\mathcal{P}_{\varphi_x}(T_x)b(x) \hat{1}_{M_x},\hat{1}_{M_x} \rangle_{\tau_x} \d\mu_X(x) \\
    = &\int_X^\oplus \varphi_x(a(x)T_xb(x))  \d\mu_X(x) \\
    =&\varphi(aTb),
\end{aligned}
$$

we have
$$\mathcal{P}_\varphi(T)= \int_X^\oplus \mathcal{P}_{\varphi_x}(T_x).$$

Hence for $T=\int_X^\oplus T_x \in\int_X^\oplus B(L^2(M_x))\d\mu_X(x)$, $\mathcal{P}_\varphi(T)=T$ if and only if $\mathcal{P}_{\varphi_x}(T_x)=T_x$ for $\mu_X$-a.e. $x\in X$.

Therefore, $\mathrm{Har}(\mathcal{P}_\varphi)=\int_X^\oplus \mathrm{Har}(\mathcal{P}_{\varphi_x})\d\mu_X(x)$ and $\mathcal{B}_\varphi=\int_X^\oplus \mathcal{B}_{\varphi_x}\d\mu_X(x)$.
\end{proof}

Theorem \ref{DI} shows that the noncommutative Poisson boundary is somehow additive, indicating that in some cases, the study of noncommutative Poisson boundaries can be simplified by focusing the Poisson boundaries of tracial factors.

Now we are ready to prove the following main theorem.

\begin{theorem}\label{CD type I}
Let $(M, \tau )$ be a tracial von Neumann algebra with separable predual. Then the following conditions are equivalent. 
\begin{itemize}
    \item[(i)]$M$ is Choquet-Deny;
    \item[(ii)]For any atomic measure $\mu\in\prob(\mathcal{U}(M))$ such that $\suppess \mu$ generates $M$ as a weakly closed unital subalgebra, $\varphi_\mu$ has a trivial Poisson boundary;
    \item[(iii)]$M$ is of type $\mathrm{I}$.
\end{itemize}
\end{theorem}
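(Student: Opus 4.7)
My plan is to close the cycle $(i)\Rightarrow(ii)\Rightarrow(iii)\Rightarrow(i)$. The direction $(i)\Rightarrow(ii)$ is a direct verification: for atomic $\mu$ with $\suppess\mu$ generating $M$ as a weakly closed unital subalgebra, enumerate $\suppess\mu=\{u_n\}$ and set $z_n:=\sqrt{\mu(u_n)}\,u_n$, so that $\varphi_\mu(T)=\sum_n\langle T\hat{z}_n^*,\hat{z}_n^*\rangle$, $\sum z_n^*z_n=\sum z_nz_n^*=1$, and the unital algebras generated by $\{z_n\}$ and $\suppess\mu$ coincide; hence $\varphi_\mu$ is a normal regular strongly generating hyperstate and $(i)$ forces $\CB_{\varphi_\mu}=M$.

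For $(iii)\Rightarrow(i)$, I would write a type~I tracial $(M,\tau)$ with separable predual as $M=\int_X^\oplus M_{n(x)}(\mathbb{C})\,d\mu_X(x)$. Given any normal regular strongly generating $\varphi$ with standard sequence $\{z_n\}$, Proposition \ref{Z(M)} confines $\har(\CP_\varphi)$ inside $Z(M)'=\int_X^\oplus B(L^2(M_{n(x)}))\,d\mu_X(x)$, so every harmonic operator is decomposable, and Theorem \ref{DI} gives $\har(\CP_\varphi)=\int_X^\oplus\har(\CP_{\varphi_x})\,d\mu_X(x)$. A minimality argument shows that $\varphi_x$ is a.e.\ strongly generating: if the weakly closed unital algebra $A_x\subset M_{n(x)}$ generated by $\{z_n(x)\}$ were proper on a positive-measure set, then $\int_X^\oplus A_x\,d\mu_X(x)$ would be a proper weakly closed unital subalgebra of $M$ containing every $z_n$, contradicting strong generation of $\varphi$. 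Proposition \ref{Mn} applied fiberwise then collapses $\har(\CP_{\varphi_x})$ to $M_{n(x)}$ a.e., giving $\har(\CP_\varphi)=M$.

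The main content is $(ii)\Rightarrow(iii)$, which I would argue by contrapositive. Assuming $M$ is not of type~I, I must exhibit an atomic $\mu\in\prob(\mathcal{U}(M))$ with $\suppess\mu$ generating $M$ and with $\varphi_\mu$ having nontrivial Poisson boundary. The first step is to reduce to the case that $M$ is a II$_1$ factor: take a central projection $p$ such that $pM$ carries no type~I summand, and direct integrate $pM$ over $Z(pM)$, producing an a.e.\ family of II$_1$ factor fibers. Inside such a II$_1$ factor, embed a copy of the hyperfinite factor $R\cong L(S_\infty)$ with conditional expectation, borrow from Proposition \ref{LS} an admissible $\mu_0\in\prob(S_\infty)\subset\prob(\mathcal{U}(R))$ whose $\varphi_{\mu_0}$-boundary on $B(L^2(R))$ is nontrivial, enlarge $\mu_0$ to an atomic $\mu$ on $\mathcal{U}(M)$ by adjoining point masses on additional unitaries so that $\suppess\mu$ generates $M$, and lift nontrivial harmonic operators from the $R$-subsystem to $\har(\CP_{\varphi_\mu})$ via the conditional expectation.

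I expect the main obstacle to be this final assembly: both the adjunction of extra unitaries needed to promote strong generation from $R$ to $M$, and the lift of harmonic operators through the expectation, may threaten to contract the harmonic space back into $M$. My plan to control this is to combine the rigidity of the noncommutative Poisson boundary (\cite[Theorem 4.1]{DP20}) with the direct-integral decomposition (Theorem \ref{DI}) to force the harmonic operators to remain strictly larger than $M$ whenever $M$ is not of type~I, and to patch the fiberwise constructions into a single global atomic measure on $\mathcal{U}(M)$ via a measurable-selection argument compatible with the direct integral.
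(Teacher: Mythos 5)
Your directions $(i)\Rightarrow(ii)$ and $(iii)\Rightarrow(i)$ are essentially the paper's arguments: the former by noting $\varphi_\mu$ is normal regular strongly generating, the latter by combining Proposition~\ref{Z(M)}, Theorem~\ref{DI}, the a.e.\ strong generation of the fibers, and Proposition~\ref{Mn}. Those are fine.

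The implication $(ii)\Rightarrow(iii)$ is where the gap lies, and it is a real one. You propose to go by contrapositive: given $M$ not of type~I, pass to a central summand with II$_1$ factor fibers, embed $R\cong L(S_\infty)$ in a fiber with a conditional expectation, take $\mu_0\in\prob(S_\infty)$ whose boundary in $B(L^2(R))$ is nontrivial, adjoin point masses so the support generates $M$, and ``lift'' harmonic operators through the expectation. This does not work as stated, for two compounding reasons. First, $\har(\CP_\varphi)$ sits in $B(L^2(M))$, and its elements are fixed points of $T\mapsto\sum\mu(u)(Ju^*J)T(JuJ)$ with $J$ the modular conjugation of $M$; a nontrivial harmonic operator for the analogous map built inside $B(L^2(R))$ (using $J_R$) has no canonical image in $B(L^2(M))$, and a trace-preserving conditional expectation $M\to R$ goes in the wrong direction and does not transport operators on $L^2(R)$ to operators on $L^2(M)$. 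Second, adjoining extra unitaries to $\suppess\mu$ to restore strong generation can only \emph{shrink} the fixed-point set $\har(\CP_{\varphi_\mu})$, since each added unitary imposes an additional constraint; so even if you had a candidate harmonic operator, there is no reason it survives the enlargement. Your plan to control this via rigidity and Theorem~\ref{DI} does not address either issue, and $R$ is merely a subfactor of the fiber, not a tensor factor, so no tensor-splitting of $\har$ is available.

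The missing idea is the amenability reduction. Working directly with hypothesis $(ii)$ (not its contrapositive), one first takes any atomic $\mu_0$ with generating essential support; triviality of $\CB_{\varphi_{\mu_0}}$ plus Theorem~\ref{DI} forces the fiberwise boundaries $\har(\CP_{(\varphi_{\mu_0})_x})$ to be trivial a.e., and then Theorem~\ref{amenable} forces a.e.\ fiber $M_x$ to be amenable. Amenable tracial factors with separable predual are exactly the $M_n(\mathbb{C})$ and $L(S_\infty)$, so the non--type-I part of $M$ is $L^\infty(X_2)\bar\otimes L(S_\infty)$ --- a genuine tensor factorization, not just a subfactor inclusion. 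That is what makes the construction of the contradicting measure possible: one takes $\mu$ of the form $\tfrac12\,\mu_1\text{-part on }M_1\oplus 1+\tfrac12\,1\oplus(\mu_2\otimes\mu_3)$, and then $0\oplus(\mathds{1}_{X_2}\otimes T_0)$ is fixed by $\CP_{\varphi_\mu}$ because one summand of $\CP_{\varphi_\mu}$ acts as the identity on that operator and the other acts as $\mathrm{id}\oplus(\CP_{\varphi_{\mu_2}}\otimes\CP_{\varphi_{\mu_3}})$, both of which fix it. Adjoining unitaries this way does not shrink the harmonic space below $0\oplus(\mathds{1}_{X_2}\otimes T_0)$ precisely because of the tensor structure and the explicit averaging of the two pieces. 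Without the amenability step you have neither the classification of the fibers nor the tensor structure, so the final assembly you flagged as the main obstacle genuinely fails.
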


\begin{proof}

We take the factor decomposition of $M$:
$$(M,\tau)=\int_X^\oplus (M_x,\tau_x)\d\mu_X(x),$$
where $L^\infty(X,\mu_X)=(Z(M),\tau|_{Z(M)})$ and for every $x\in M$, $(M_x,\tau_x)$ is a tracial factor with separable predual.\\

(i) $\Rightarrow$ (ii) is clear.\\

(ii) $\Rightarrow$ (iii): Assume that $M$ satisfies (ii). First, let's prove that for $\mu_X$-a.e. $x\in X$, $(M_x,\tau_x)$ is amenable. 

Take an atomic measure $\mu_0\in\prob(\mathcal{U}(M))$ such that $\suppess \mu_0$ generates $M$ as a weakly closed unital subalgebra. Then $\psi:=\varphi_{\mu_0}$ is a normal regular strongly generating hyperstate. Let 
$$\psi|_{\int_X^\oplus B(L^2(M_x))\d\mu_X(x)}=\int_X^\oplus \psi_x\d\mu_X(x)$$ 
be the decomposition of $\psi$ while restricting to the decomposable operators. 

Since $M$ satisfies (ii), we have $\mathrm{Har}(\mathcal{P}_\psi)=M$. By Theorem \ref{DI}, we also have
$$\int_X^\oplus \mathrm{Har}(\mathcal{P}_{\psi_x})\d\mu_X(x)=\mathrm{Har}(\mathcal{P}_\psi)=M=\int_X^\oplus M_x\d\mu_X(x).$$
Hence for $\mu_X$-a.e. $x\in X$, $\mathrm{Har}(\mathcal{P}_{\psi_x})=M_x$. By Theorem \ref{amenable}, for $\mu_X$-a.e. $x\in X$, $(M_x,\tau_x)$ is amenable.

Since an amenable tracial factor can only be $M_n(\mathbb{C})$ $(n\in\mathbb{N}_+)$ or $L(S_\infty)$, let
$$X_1=\{x\in X\mid \exists \ n\in\mathbb{N}_+,  M_x=M_n(\mathbb{C})\},$$
$$X_2=\{x\in X \mid M_x=L(S_\infty)\}.$$
Then $X_1\bigsqcup X_2$ is a conull subset of $X$. Now let's prove that $\mu_X(X_2)=0$.

Assume that $\mu_X(X_2)\not=0$. Let $\mu_{X_1}=\mu_X|_{X_1}/\mu_X(X_1)(=0,\ \mathrm{if} \ \mu_X(X_1)=0)$, $\mu_{X_2}=\mu_X|_{X_2}/\mu_X(X_2)$ and
$$(M_1,\tau_1)=\int_{X_1}^\oplus (M_x,\tau_x)\d\mu_{X_1}(x),$$
$$(M_2,\tau_2)=\int_{X_2}^\oplus (M_x,\tau_x)\d\mu_{X_2}(x)=L^\infty(X_2,\mu_{X_2})\Bar{\otimes}L(S_\infty).$$
Then 
$$(M,\tau)=(M_1\oplus M_2, (\mu_X(X_1)\cdot\tau_1)\oplus(\mu_X(X_2)\cdot\tau_2)).$$

Take an atomic measure $\mu_1\in\prob(\mathcal{U}(M_1))$ such that $\suppess \mu_1$ generates $M_1$ as a weakly closed unital subalgebra. Take an atomic measure $\mu_2\in\prob(\mathcal{U}(L^\infty(X_2)))$ such that $\suppess \mu_2$ generates $L^\infty(X_2)$ as a weakly closed unital subalgebra. 

By Proposition \ref{LS}, $L(S_\infty)$ is not Choquet-Deny. And according to the proof, there exists an atomic measure $\mu_3\in\prob(\mathcal{U}(L(S_\infty)))$ such that $\suppess \mu_3$ generates $L(S_\infty)$ as a weakly closed unital subalgebra, and $\varphi_{\mu_3}$ has non-trivial boundary. So there exists a $T_0 \in B(L^2(S_\infty)) \setminus L(S_\infty)$ such that
$$\mathcal{P}_{\varphi_{\mu_3}}(T_0)=T_0.$$

Now define $\mu \in \prob(\mathcal{U}(M))$ by
$$\mu = \frac{1}{2}\sum_{u\in \suppess\mu_1}\mu_1(u) \cdot \delta_{u \oplus 1_{M_2}}+\frac{1}{2}\sum_{\substack{v\in \suppess\mu_2\\ w \in \suppess \mu_3} }\mu_2(v)\mu_3(w)\cdot\delta_{1_{M_1} \oplus (v\otimes w)}.$$

We have that $\mu$ is atomic and
$$\suppess \mu=\{u \oplus 1_{M_2} \}_{u\in \suppess\mu_1}\cup \{1_{M_1} \oplus (v\otimes w)\}_{\substack{v\in \suppess\mu_2\\ w \in \suppess \mu_3}}.$$ 
By the generating property of $\mu_1,\mu_2,\mu_3$, we know that $\suppess \mu$ also generates $M$ as a weakly closed unital subalgebra .

Consider $0\oplus(\mathds{1}_{X_2}\otimes T_0)\in B(L^2(M))$, we have
$$
\begin{aligned}
&\mathcal{P}_{\varphi_\mu}(0\oplus(\mathds{1}_{X_2}\otimes T_0))\\
=&\frac{1}{2}(\mathcal{P}_{\varphi_{\mu_1}}\oplus (\mathrm{id}\otimes \mathrm{id}))(0\oplus(\mathds{1}_{X_2}\otimes T_0))\\
&+\frac{1}{2}(\mathrm{id}\oplus(\mathcal{P}_{\varphi_{\mu_2}}\otimes\mathcal{P}_{\varphi_{\mu_3}}))(0\oplus(\mathds{1}_{X_2}\otimes T_0))\\
=&0\oplus(\mathds{1}_{X_2}\otimes T_0)
\end{aligned}
$$
Hence $0\oplus(\mathds{1}_{X_2}\otimes T_0)\in \mathrm{Har}(\mathcal{P}_{\varphi_\mu})$. But $T_0\not \in L(S_\infty)$, so $0\oplus(\mathds{1}_{X_2}\otimes T_0)\not \in M$. Therefore, $\mathrm{Har}(\mathcal{P}_{\varphi_\mu}) \not =M$ and $M$ doesn't satisfies (ii), contradiction. 

Therefore, $\mu_X(X_2)=0$ and $M=M_1$ is of type $\mathrm{I}$.
\\

(iii) $\Rightarrow$ (i): Assume that $M$ is of type $\mathrm{I}$. Then for $\mu_X$-a.e. $x\in X$, $(M_x,\tau_x)$ is a finite type $\mathrm{I}$ factor. Hence after replacing $X$ with a conull subset, we may assume that for every $x\in X$, $(M_x,\tau_x)=(M_{n_x}(\mathbb{C}),\frac{1}{n_x}Tr)$ for some $n_x\in \mathbb{N}_+$.

Fix a normal regular strongly generating hyperstate $\varphi$, we want to prove that $\mathrm{Har}(\mathcal{P}_\varphi)=M$.

Let $$\varphi|_{\int_X^\oplus B(L^2(M_x))\d\mu_X(x)}=\int_X^\oplus \varphi_x\d\mu_X(x)$$ 
be the decomposition of $\varphi$ while restricting to the decomposable operators. We will prove that for $\mu_X$-a.e. $x\in X$, $\varphi_x$ is normal regular strongly generating.

Take $\{z_n\}\subset M$ such that $\sum_{n=1}^{\infty} z_n^*z_n=\sum_{n=1}^{\infty} z_nz_n^*=1$ and 
$$\mathcal{P}_\varphi(T)=\sum_{n=1}^{\infty}(Jz_n^*J)T(Jz_nJ),\ T\in B(L^2(M)).$$
Then for $\mu_X$-a.e. $x\in X$, 
\begin{equation}\label{2321}
    \sum_{n=1}^{\infty}\limits z_n^*(x)z_n(x)=\sum_{n=1}^{\infty}\limits z_n(x)z_n^*(x)=1_M(x)=1_{M_x}.
\end{equation}
 For any $T=\int_X^\oplus T_x \in\int_X^\oplus B(L^2(M_x))\d\mu_X(x)$, we have
$$\int_X^\oplus \varphi_x(T_x)\d\mu_X(x)=\varphi(T)=\sum_{n=1}^{\infty}\langle Tz_n^*\hat{1},z_n^*\hat{1}\rangle =\int_X^\oplus \sum_{n=1}^{\infty}\langle T_xz_n^*(x)\hat{1}_{M_x},z_n^*(x)\hat{1}_{M_x}\rangle_{\tau_x}\d\mu_X(x).$$
Therefore, for $\mu_X$-a.e. $x\in X$,
$$\varphi_x(\, \cdot \, )=\sum_{n=1}^{\infty}\langle \, \cdot \, z_n^*(x)\hat{1}_{M_x},z_n^*(x)\hat{1}_{M_x}\rangle_{\tau_x}.$$
And by (\ref{2321}), for $\mu_X$-a.e. $x\in X$, $\varphi_x$ is normal regular.

Now let's prove that for $\mu_X$-a.e. $x\in X$, $\varphi_x$ is strongly generating. Let $N\subset M$ be the weakly closed unital subalgebra generated by $\{z_n\}$. For $x\in X$, let $N_x\subset M_x$ be the weakly closed unital subalgebra generated by $\{z_n(x)\}$. Then
$$N\subset\int_X^\oplus N_x\d\mu_X(x)\subset \int_X^\oplus M_x\d\mu_X(x) =M.$$

Since $\varphi$ is strongly generating, we have $N=M$. Therefore, 
$$\int_X^\oplus N_x\d\mu_X(x)= \int_X^\oplus M_x\d\mu_X(x)$$
Hence for $\mu_X$-a.e. $x\in X$, $N_x=M_x$, inducing that $\varphi_x$ is strongly generating.

By Proposition \ref{Mn}, for every $x\in X$, $M_x=M_{n_x}(\mathbb{C})$ is Choquet-Deny. Also, for $\mu_X$-a.e. $x\in X$, $\varphi_x$ is normal regular strongly generating, so we have that for $\mu_X$-a.e. $x\in X$,
$$\mathrm{Har}(\mathcal{P}_{\varphi_x})=M_x.$$
Furthermore, by Theorem \ref{DI},
$$\mathrm{Har}(\mathcal{P}_\varphi)=\int_X^\oplus \mathrm{Har}(\mathcal{P}_{\varphi_x})\d\mu_X(x)=\int_X^\oplus M_x\d\mu_X(x)=M.$$

Therefore, $\mathrm{Har}(\mathcal{P}_\varphi)=M$ for any normal regular strongly generating $\varphi$. Hence $M$ is Choquet-Deny.
\end{proof}

\begin{remark}
As mentioned in the introduction part, for a countable discrete group $\Gamma$, the Choquet-Deny property of $L(\Gamma)$ is not equivalent to, but strictly stronger than the Choquet-Deny property of $\Gamma$. Actually, there exists no such property (CD) for tracial von Neumann algebras satisfying that $\Gamma$ is Choquet-Deny if and only if $L(\Gamma)$ has the property (CD). This is because a Choquet-Deny group and a non-Choquet-Deny group can have a same group von Neumann algebra. Consider two groups: the Heisenberg group $H_3(\mathbb{Z})$ and $\mathbb{Z}\times S_\infty$. Since $H_3(\mathbb{Z})$ is amenable and of type $\mathrm{II}_1$ \cite[Example 7.D.5 and Theorem 8.F.4]{BH20}, we have $L(H_3(\mathbb{Z}))=Z(L(H_3(\mathbb{Z})))\Bar{\otimes}\mathcal{R}$, where $\mathcal{R}$ is the unique AFD $\mathrm{II}_1$ factor. Since $Z(H_3(\mathbb{Z}))\cong \mathbb{Z}$, $Z(L(H_3(\mathbb{Z})))$ must be diffuse for having a diffuse subalgebra $L(\mathbb{Z})$. Therefore, $Z(L(H_3(\mathbb{Z})))\cong L^\infty([0,1])$ and $L(H_3(\mathbb{Z}))=L^\infty([0,1])\Bar{\otimes}\mathcal{R}$. We also have $L(\mathbb{Z}\times S_\infty)=L(\mathbb{Z})\Bar{\otimes} L(S_\infty)=L^\infty([0,1])\Bar{\otimes}\mathcal{R}$. Hence these two groups has the same group von Neumann algebra. However, following \cite{Ja04} and \cite{FHTV19}, since $H_3(\mathbb{Z})$ is finitely generated and nilpotent, it is Choquet-Deny; while $\mathbb{Z}\times S_\infty$ has the ICC quotient $S_\infty$, hence it is non-Choquet-Deny. Therefore, there exists no property for tracial von Neumann algebras that can perfectly match with the Choquet-Deny property of groups.
\end{remark}

\section*{Acknowledgements}
This paper was completed under the supervision of Professor Cyril Houdayer. Theorem \ref{amenable} and Theorem \ref{CD type I} were initially introduced by Professor Houdayer as conjectures. The author would like to express sincere gratitude to Professor Cyril Houdayer for numerous insightful discussions and valuable comments on this paper. The author would also like to thank Professor Amine Marrakchi for useful comments regarding this paper.

\bibliographystyle{alphaurl}
\bibliography{ref}

\begin{thebibliography}{FHTVF19}

\bibitem[AH14]{AH14}
Hiroshi Ando and Uffe Haagerup.
\newblock Ultraproducts of von {N}eumann algebras.
\newblock {\em J. Funct. Anal.}, 266(12):6842--6913, 2014.
\newblock \href {https://doi.org/10.1016/j.jfa.2014.03.013} {\path{doi:10.1016/j.jfa.2014.03.013}}.

\bibitem[BdlH20]{BH20}
Bachir Bekka and Pierre de~la Harpe.
\newblock {\em Unitary representations of groups, duals, and characters}, volume 250 of {\em Mathematical Surveys and Monographs}.
\newblock American Mathematical Society, Providence, RI, [2020] \copyright 2020.

\bibitem[Bha97]{Bh97}
Rajendra Bhatia.
\newblock {\em Matrix analysis}, volume 169 of {\em Graduate Texts in Mathematics}.
\newblock Springer-Verlag, New York, 1997.
\newblock \href {https://doi.org/10.1007/978-1-4612-0653-8} {\path{doi:10.1007/978-1-4612-0653-8}}.

\bibitem[BJKW00]{BJKW00}
O.~Bratteli, P.~E.~T. Jorgensen, A.~Kishimoto, and R.~F. Werner.
\newblock Pure states on {$\mathcal{O}_d$}.
\newblock {\em J. Operator Theory}, 43(1):97--143, 2000.

\bibitem[BS55]{BS55}
Julius Bendat and Seymour Sherman.
\newblock Monotone and convex operator functions.
\newblock {\em Trans. Amer. Math. Soc.}, 79:58--71, 1955.
\newblock \href {https://doi.org/10.2307/1992836} {\path{doi:10.2307/1992836}}.

\bibitem[CD60]{CD60}
Gustave Choquet and Jacques Deny.
\newblock Sur l'\'{e}quation de convolution {$\mu =\mu \ast \sigma $}.
\newblock {\em C. R. Acad. Sci. Paris}, 250:799--801, 1960.

\bibitem[CD20]{CD20}
Ionut Chifan and Sayan Das.
\newblock Rigidity results for von {N}eumann algebras arising from mixing extensions of profinite actions of groups on probability spaces.
\newblock {\em Math. Ann.}, 378(3-4):907--950, 2020.
\newblock \href {https://doi.org/10.1007/s00208-020-02064-8} {\path{doi:10.1007/s00208-020-02064-8}}.

\bibitem[Cho74]{Ch74}
Man~Duen Choi.
\newblock A {S}chwarz inequality for positive linear maps on {$C\sp{\ast} \ $}-algebras.
\newblock {\em Illinois J. Math.}, 18:565--574, 1974.
\newblock URL: \url{http://projecteuclid.org/euclid.ijm/1256051007}.

\bibitem[Con76]{Co75}
A.~Connes.
\newblock Classification of injective factors. {C}ases {$II\sb{1},$} {$II\sb{\infty },$} {$III\sb{\lambda },$} {$\lambda \not=1$}.
\newblock {\em Ann. of Math. (2)}, 104(1):73--115, 1976.
\newblock \href {https://doi.org/10.2307/1971057} {\path{doi:10.2307/1971057}}.

\bibitem[DP22]{DP20}
Sayan Das and Jesse Peterson.
\newblock Poisson boundaries of {${\rm II}_1$} factors.
\newblock {\em Compos. Math.}, 158(8):1746--1776, 2022.
\newblock \href {https://doi.org/10.1112/S0010437X22007539} {\path{doi:10.1112/S0010437X22007539}}.

\bibitem[FHTVF19]{FHTV19}
Joshua Frisch, Yair Hartman, Omer Tamuz, and Pooya Vahidi~Ferdowsi.
\newblock Choquet-{D}eny groups and the infinite conjugacy class property.
\newblock {\em Ann. of Math. (2)}, 190(1):307--320, 2019.
\newblock \href {https://doi.org/10.4007/annals.2019.190.1.5} {\path{doi:10.4007/annals.2019.190.1.5}}.

\bibitem[FNW94]{FNW94}
M.~Fannes, B.~Nachtergaele, and R.~F. Werner.
\newblock Finitely correlated pure states.
\newblock {\em J. Funct. Anal.}, 120(2):511--534, 1994.
\newblock \href {https://doi.org/10.1006/jfan.1994.1041} {\path{doi:10.1006/jfan.1994.1041}}.

\bibitem[Gro84]{Gr84}
Ulrich Groh.
\newblock Uniform ergodic theorems for identity preserving {S}chwarz maps on {$W\sp{\ast} $}-algebras.
\newblock {\em J. Operator Theory}, 11(2):395--404, 1984.

\bibitem[Hei51]{He51}
Erhard Heinz.
\newblock Beitr\"{a}ge zur {S}t\"{o}rungstheorie der {S}pektralzerlegung.
\newblock {\em Math. Ann.}, 123:415--438, 1951.
\newblock \href {https://doi.org/10.1007/BF02054965} {\path{doi:10.1007/BF02054965}}.

\bibitem[Izu02]{Izu02}
Masaki Izumi.
\newblock Non-commutative {P}oisson boundaries and compact quantum group actions.
\newblock {\em Adv. Math.}, 169(1):1--57, 2002.
\newblock \href {https://doi.org/10.1006/aima.2001.2053} {\path{doi:10.1006/aima.2001.2053}}.

\bibitem[Izu04]{Iz04}
Masaki Izumi.
\newblock Non-commutative {P}oisson boundaries.
\newblock In {\em Discrete geometric analysis}, volume 347 of {\em Contemp. Math.}, pages 69--81. Amer. Math. Soc., Providence, RI, 2004.
\newblock \href {https://doi.org/10.1090/conm/347/06267} {\path{doi:10.1090/conm/347/06267}}.

\bibitem[Jaw04]{Ja04}
Wojciech Jaworski.
\newblock Countable amenable identity excluding groups.
\newblock {\em Canad. Math. Bull.}, 47(2):215--228, 2004.
\newblock \href {https://doi.org/10.4153/CMB-2004-021-1} {\path{doi:10.4153/CMB-2004-021-1}}.

\bibitem[Kra36]{Kr36}
Fritz Kraus.
\newblock \"{U}ber konvexe {M}atrixfunktionen.
\newblock {\em Math. Z.}, 41(1):18--42, 1936.
\newblock \href {https://doi.org/10.1007/BF01180403} {\path{doi:10.1007/BF01180403}}.

\bibitem[KV83]{KV82}
V.~A. Ka\u{\i}manovich and A.~M. Vershik.
\newblock Random walks on discrete groups: boundary and entropy.
\newblock {\em Ann. Probab.}, 11(3):457--490, 1983.
\newblock URL: \url{http://www.jstor.org/stable/2243645}.

\bibitem[L\"34]{Lo34}
Karl L\"{o}wner.
\newblock \"{U}ber monotone {M}atrixfunktionen.
\newblock {\em Math. Z.}, 38(1):177--216, 1934.
\newblock \href {https://doi.org/10.1007/BF01170633} {\path{doi:10.1007/BF01170633}}.

\bibitem[Ocn85]{Oc85}
Adrian Ocneanu.
\newblock {\em Actions of discrete amenable groups on von {N}eumann algebras}, volume 1138 of {\em Lecture Notes in Mathematics}.
\newblock Springer-Verlag, Berlin, 1985.
\newblock \href {https://doi.org/10.1007/BFb0098579} {\path{doi:10.1007/BFb0098579}}.

\bibitem[Ray02]{Ra02}
Yves Raynaud.
\newblock On ultrapowers of non commutative {$L_p$} spaces.
\newblock {\em J. Operator Theory}, 48(1):41--68, 2002.

\bibitem[SS23]{SS22}
Elad Sayag and Yehuda Shalom.
\newblock Entropy, ultralimits and the poisson boundary, 2023.
\newblock \href {https://arxiv.org/abs/2202.06607} {\path{arXiv:2202.06607}}.

\bibitem[Tak02]{TakI}
M.~Takesaki.
\newblock {\em Theory of operator algebras. {I}}, volume 124 of {\em Encyclopaedia of Mathematical Sciences}.
\newblock Springer-Verlag, Berlin, 2002.
\newblock Reprint of the first (1979) edition, Operator Algebras and Non-commutative Geometry, 5.

\bibitem[Tak03]{TakII}
M.~Takesaki.
\newblock {\em Theory of operator algebras. {II}}, volume 125 of {\em Encyclopaedia of Mathematical Sciences}.
\newblock Springer-Verlag, Berlin, 2003.
\newblock Operator Algebras and Non-commutative Geometry, 6.
\newblock \href {https://doi.org/10.1007/978-3-662-10451-4} {\path{doi:10.1007/978-3-662-10451-4}}.

\bibitem[Tho64]{THOM64}
Elmar Thoma.
\newblock \"{U}ber unit\"{a}re {D}arstellungen abz\"{a}hlbarer, diskreter {G}ruppen.
\newblock {\em Math. Ann.}, 153:111--138, 1964.
\newblock \href {https://doi.org/10.1007/BF01361180} {\path{doi:10.1007/BF01361180}}.

\end{thebibliography}

\end{document}